\newtheorem{theorem}{Theorem}[section]
\newtheorem{lemma}[theorem]{Lemma}
\newtheorem{remark}[theorem]{Remark}
\newtheorem{corollary}[theorem]{Corollary}
\newtheorem{proposition}[theorem]{Proposition}
\def\dd{\mathrm{d}}
\def\eps{\varepsilon}
\def\Z{\mathbf{Z}}
\def\N{\mathbf{N}}
\def\Q{\mathbf{Q}}
\def\R{\mathbf{R}}
\def\into{\hookrightarrow}
\def\onto{\twoheadrightarrow}
\def\la{\lambda}
\def\KK{\mathcal{K}}
\def\PF{\mathcal{P}_{\mathrm{f}}}
\date{January 30, 2020.}
\begin{document}
\centerline{}

\title[Hausdorff metric]{Hausdorff metric between simplicial complexes}
\author[I.~Marin]{Ivan Marin}
\address{LAMFA, UMR CNRS 7352, Universit\'e de Picardie-Jules Verne, Amiens, France}
\email{ivan.marin@u-picardie.fr}
\medskip

\begin{abstract} 
We introduce a distance function between simplicial complexes and study several of its properties. 
\end{abstract}

\maketitle

\tableofcontents

\section{Introduction}

In \cite{SRV} we have shown that any simplicial complex can be (geometrically) realized as a closed subset of the space of discrete random variables. As a consequence, a natural metric on the collection of all simplicial complexes is given by the Hausdorff distance between the associated sets. In this paper, we explore the corresponding metric structure on simplicial complexes.

It turns out that this metric structure can be described directly at the level of the usual `geometric realization' of the simplicial complexes, provided we endow them with the appropriate $L^1$ metric structure and the associated Hausdorff distance function (see corollary \ref{cor:dLKdMK}). Let $\mathcal{S}$ denote the collection of all simplicial complexes with vertices inside a given (infinite) set $S$, endowed with the distance function $d$. The main results we obtain are the following ones. \begin{itemize} 
\item If $\mathcal{K}_1,\mathcal{K}_2$ are finite simplicial complexes, then $d(\mathcal{K}_1,\mathcal{K}_2) \in \Q$. 
\item The distance function takes all possible values inside $[0,1]$ 
\item If the $(n+1)$-squelettons of $\mathcal{K}_1$ and $\mathcal{K}_2$ differ, then $d(\mathcal{K}_1,\mathcal{K}_2) \geq \frac{1}{n+2}$. 
\item $\mathcal{S}$ is \emph{not} locally compact. The union of all the finite-dimensional complexes is \emph{not} dense inside $\mathcal{S}$, and is equal to the collection of all isolated points of $\mathcal{S}$. \end{itemize} 
This distance function enables one to define a metric on the collection of isomorphism classes of simplicial complexes, when the cardinality of
the vertices is not greater than a given cardinal $\mathfrak{a}$. Taking for $S$ the corresponding ordinal, if $K$ is such an isomorphism class of simplicial complexes it admits a representative $\mathcal{K} \in \mathcal{S}$, and, for two such isomorphism classes $K_1,K_2$ and representatives $\mathcal{K}_1$, $\mathcal{K}_2$, the following abstract distance is well-defined $$ d(K_1,K_2) = \inf_{\sigma \in \mathfrak{S}(S)} d(\sigma(\mathcal{K}_1),\mathcal{K}_2) = \inf_{\sigma \in \mathcal{S}} d(\mathcal{K}_1,\sigma(\mathcal{K}_2)) $$ where $\mathfrak{S}(S)$ denotes the group of permutations of $S$. This distance function does not depend on the chosen cardinal $\mathfrak{a}$ (see proposition \ref{prop:indepST}). We show that
\begin{itemize} 
\item If $K_1$, $K_2$ are isomorphism classes of simplicial complexes and are finite, then $d(K_1,K_2) \in \Q$ (cor. \ref{cor:findKisoQ}). 
\item If $K_1$, $K_2$ are isomorphism classes of finite dimensional complexes, then they admit representatives $\KK_1,\KK_2$ such that $d(K_1,K_2) = d(\KK_1,\KK_2)$ (prop. \ref{prop:findimdd}).
\item If the $(n+1)$-squeletons of $K_1$ and $K_2$ differ, then $d(K_1,K_2) \geq \frac{1}{n+2}$ (cor. \ref{cor:ineqsquela})
\item The distance function takes all possible values inside $[0,1]$, except possibly
irrational values in $[1/2,1]$ (prop. \ref{prop:excompiso}).
\end{itemize}

Finally, we explicitely compute the distances between isomorphism classes of
simplicial complexes with at most 4 vertices.

\medskip

{\bf Acknowledgements.} I thank Craig Westerland for an inspiring discussion at a conference in Edinburgh which lead to this project.

\section{Hausdorff metric on simplicial complexes}
\subsection{Definition}

Let $\Omega$ a standard measured space, that is a measured space
isomorphic to $[0,1]$. Let $S$ denote a fixed set, and $\mathcal{P}(S)$ (resp. $\mathcal{P}^*(S)$) the collection
of all subsets (resp. all non-empty subsets) of $S$. If 
$\mathcal{K} \subset \mathcal{P}(S)$ is non-empty,
we define $L(\mathcal{K}) = \{  f : \Omega \to S \ | \ f(\Omega) \in \mathcal{K} \}$. This is a subspace, introduced in \cite{SRV}, of the space $L(\Omega,S)$ of
Borel maps $\Omega \to S$ up to neglectability, as defined in \cite{CCS,SRV}. It is a metric space, where $d(f,g) = \int_{\Omega} d(f(t),g(t))\dd t$, where $S$ is endowed with the discrete metric. 
Then, $L(\mathcal{K})$ is closed
iff $\mathcal{K}$ is finite. We denote its closure by $\bar{L}(\mathcal{K})$. We denote $\PF^*(S) \subset \mathcal{P}^*(S)$ the collection of all finite, non-empty subsets.

If $\mathcal{K}_1,\mathcal{K}_2 \subset \mathcal{P}_f^*(S)$, $L(\mathcal{K}_1)= L(\mathcal{K}_2)$ if and only if $\bar{L}(\mathcal{K}_1)= \bar{L}(\mathcal{K}_2)$. This is because
$L(\mathcal{K}) = \{ f \in \bar{L}(\Omega,\mathcal{K}) ; \# f(\Omega) < \infty \}$. In \cite{SRV}, it is proven that, if $\mathcal{K}$ is a simplicial complex, then $L(\KK)$ (resp. $\bar{L}(\KK)$) is
homotopically equivalent (resp. weakly homotopically equivalent) to the usual \emph{geometric realization} of $\mathcal{K}$.

For $\mathcal{K}_1,\mathcal{K}_2 \subset \mathcal{P}_f^*(S)$,
we define the \emph{Hausdorff distance} $d(\mathcal{K}_1,\mathcal{K}_2)$ as the usual Hausdorff
distance $d(L(\mathcal{K}_1),L(\mathcal{K}_2))$ between the subspaces $L(\mathcal{K}_1)$
and  $L(\mathcal{K}_2)$ namely $d(X,Y) = \max( \delta(X,Y),\delta(Y,X))$ with
$$
\delta(X,Y) = \sup_{x \in X} d(x,Y) = \sup_{x \in X} \inf_{y \in Y} d(x,y).
$$
Recall that the Hausdorff distance defines a pseudo-distance on the collection of bounded subspaces of any given
metric space $E$, that $\delta(\bar{X},\bar{Y}) = \delta(X,Y)$, $d(\bar{X},\bar{Y}) = d(X,Y)$,
and that it restricts to a distance on the collection of closed bounded supspaces.

We then have $d(\mathcal{K}_1,\mathcal{K}_2) = \max(\delta(\mathcal{K}_1,\mathcal{K}_2),\delta(\mathcal{K}_2,\mathcal{K}_1))$
with
$$
\delta(\mathcal{K}_1,\mathcal{K}_2) = \delta\left(L(\mathcal{K}_1),L(\mathcal{K}_2)\right) = \sup_{f \in L(\mathcal{K}_1)} d(f,L(\mathcal{K}_2)) 
= \sup_{f \in L(\mathcal{K}_1)} \inf_{g \in L(\mathcal{K}_2)} d(f,g).
$$
We notice that $\delta(\mathcal{K}_1,\mathcal{K}_2) = \delta\left(L(\mathcal{K}_1),L(\mathcal{K}_2)\right)
= \delta\left(\bar{L}(\mathcal{K}_1),\bar{L}(\mathcal{K}_2)\right)$, and
$d(\mathcal{K}_1,\mathcal{K}_2) = d\left(L(\mathcal{K}_1),L(\mathcal{K}_2)\right)
= d\left(\bar{L}(\mathcal{K}_1),\bar{L}(\mathcal{K}_2)\right)$. Therefore
$$
\mathcal{K}_1 = \mathcal{K}_2 \Leftrightarrow L(\mathcal{K}_1) = L(\mathcal{K}_2)
\Leftrightarrow \bar{L}(\mathcal{K}_1) = \bar{L}(\mathcal{K}_2)
\Leftrightarrow d(\bar{L}(\mathcal{K}_1), \bar{L}(\mathcal{K}_2)) = 0
\Leftrightarrow d(L(\mathcal{K}_1), L(\mathcal{K}_2)) = 0.
$$

We will see at the end of section \ref{sect:dLK} that, if $S \subset T$, and $\mathcal{K}_1,\mathcal{K}_2$ are simplicial complexes over $S$, hence over $T$, then the distance
between the two does not depend on whether it is calculated over $S$ or over $T$.
Therefore it is legitimate not to include $S$ in the notation of
$d(\mathcal{K}_1,\mathcal{K}_2)$.

We denote $\mathcal{T} = \mathcal{P}^*(\mathcal{P}^*_f)$ endowed with this distance,
and $\mathcal{S} \subset \mathcal{T}$ its subspace made of the simplicial complexes,
namely the elements $E$ of $\mathcal{T}$ satisfying $\forall F_1,F_2 \in \mathcal{P}^*_f(S) \  F_1 \subset F_2 \ \& \ F_2 \in E \Rightarrow F_1 \in E$.

\subsection{Computation means, estimates, and rationality}
\subsubsection{Distance to $L(\mathcal{K})$}
\label{sect:dLK}

The distance of a given map to $L(\mathcal{K})$ can be easily determined. It is given by the following lemma.

\begin{lemma} \label{lem:calcdFK} Let $f_0 \in L(\Omega,S)$ and $\mathcal{K} \subset \mathcal{P}_*(S)$ a simplicial complex. Then
$$
d(f_0,L(\mathcal{K})) = \inf_{F \in \mathcal{E}} \la\left( \Omega \setminus f_0^{-1}(F)\right)
$$
where $\mathcal{E}$ is the set of all maximal subsets $F$ of $f_0(\Omega)$ having the property $F \in \mathcal{K}$, and by
convention this infimum is $1 = diam(L(\Omega,S))$ if $\mathcal{E}$ is empty. Moreover, if $d(f_0,L(\mathcal{K}))<1$ then there exists $f \in L(\mathcal{K})$ with $f(\Omega) \subset
f_0(\Omega)$ such that $d(f_0,f) = d(f_0,L(\mathcal{K}))$.
\end{lemma}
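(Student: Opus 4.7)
The plan is to establish the equality by two matching inequalities, then to derive the final clause from the explicit competitor produced in the upper-bound step.

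For the upper bound $d(f_0,L(\mathcal{K})) \leq \inf_{F \in \mathcal{E}} \la(\Omega \setminus f_0^{-1}(F))$, I would, for each $F \in \mathcal{E}$, exhibit an explicit element $f_F \in L(\mathcal{K})$ at distance precisely $\la(\Omega \setminus f_0^{-1}(F))$ from $f_0$. Pick any $s_F \in F$ and set
$$
f_F(t) = \begin{cases} f_0(t) & \text{if } f_0(t) \in F, \\ s_F & \text{otherwise.} \end{cases}
$$
Then $f_F$ is Borel with image contained in $F$; since $\mathcal{K}$ is a simplicial complex and $F \in \mathcal{K}$, the (non-empty) image $f_F(\Omega)$ belongs to $\mathcal{K}$, so $f_F \in L(\mathcal{K})$. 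The maps $f_0$ and $f_F$ agree exactly on $f_0^{-1}(F)$, giving $d(f_0,f_F) = \la(\Omega \setminus f_0^{-1}(F))$.

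For the reverse inequality, fix $g \in L(\mathcal{K})$, assuming without loss of generality $d(f_0,g) < 1$. Let $A = \{t : f_0(t) = g(t)\}$, a Borel set of measure $1 - d(f_0,g) > 0$, and put $F_g := f_0(A) = g(A)$. This is a non-empty finite subset of $g(\Omega) \in \mathcal{K}$, so $F_g \in \mathcal{K}$ by the simplicial-complex axiom, and clearly $F_g \subset f_0(\Omega)$. Extending $F_g$ to a maximal element $F \in \mathcal{E}$ by adding elements of $f_0(\Omega)$ one at a time as long as the result stays in $\mathcal{K}$, we have $A \subset f_0^{-1}(F_g) \subset f_0^{-1}(F)$, hence $\la(\Omega \setminus f_0^{-1}(F)) \leq d(f_0,g)$; the infimum over $g$ then yields the matching bound.

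The hard part will be the existence of this maximal extension: in the finite-dimensional case it follows immediately by induction on the dimension of $\mathcal{K}$, but in general one invokes Zorn's lemma on the inclusion-poset of finite subsets of $f_0(\Omega)$ in $\mathcal{K}$ containing $F_g$, the degenerate possibility $\mathcal{E} = \varnothing$ being absorbed by the stated convention. For the final clause, observe that the construction above satisfies $d(f_0,f_F) = \la(\Omega \setminus f_0^{-1}(F))$ exactly, with $f_F(\Omega) \subset F \subset f_0(\Omega)$, so it suffices to produce some $F^{*} \in \mathcal{E}$ attaining the infimum when $d(f_0,L(\mathcal{K})) < 1$, after which $f := f_{F^{*}}$ will satisfy the conclusion. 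This attainment is argued from a minimizing sequence $g_n \in L(\mathcal{K})$ by applying the lower-bound construction to each $g_n$ and extracting a limit via a diagonal argument on the at most countable essential image of $f_0$, together with dominated convergence on the point-masses $\la(f_0^{-1}(s))$.
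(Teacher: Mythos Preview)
Your two-inequality organisation is essentially the paper's argument repackaged: the paper's successive reductions (restrict to $f(\Omega)\subset f_0(\Omega)$, then to $f(\Omega)$ maximal, then to $f=f_0$ on $f_0^{-1}(f(\Omega))$) amount to your lower-bound step, and your explicit competitor $f_F$ is exactly what emerges at the end of the paper's proof. The one concrete error is in the step you rightly flag as delicate: Zorn's lemma does \emph{not} apply to the inclusion-poset of finite subsets of $f_0(\Omega)$ lying in $\mathcal{K}$, because a strictly increasing infinite chain of such sets has infinite union, which cannot belong to $\mathcal{K}\subset\mathcal{P}_f^*(S)$. Your remark that finite dimension of $\mathcal{K}$ suffices is correct (chains then have bounded length), and in fact every use of the lemma in the paper has $f_0(\Omega)$ finite (one always takes $f_0\in L(\mathcal{K}_1)$ for some complex $\mathcal{K}_1$), which makes the poset finite and the issue moot. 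But in the stated generality the extension step fails; for $f_0$ with essential image $\mathbb{N}$ and $\mathcal{K}=\mathcal{P}_f^*(\mathbb{N})$ one has $\mathcal{E}=\varnothing$ while $d(f_0,L(\mathcal{K}))=0$, so the formula itself breaks down there. The paper's proof glosses over the same point.

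Your sketch for the attainment clause (minimising sequence, diagonal extraction, dominated convergence) is too vague to count as a proof, and in fact cannot succeed in full generality. Take $\mathcal{K}$ generated by the simplices $F_k=\{a_k,b_1,\dots,b_k\}$ with the $a_k,b_j$ all distinct, and $f_0$ with point-masses $\lambda(f_0^{-1}(a_k))=2^{-k-2}$, $\lambda(f_0^{-1}(b_k))=3\cdot 2^{-k-2}$; then $\mathcal{E}=\{F_k:k\ge1\}$ and $\lambda(\Omega\setminus f_0^{-1}(F_k))=\tfrac14+2^{-k-1}$ strictly decreases to $\tfrac14$ without attaining it, so no $f\in L(\mathcal{K})$ realises $d(f_0,f)=\tfrac14$. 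The paper does not separately argue this clause either; when $f_0(\Omega)$ is finite, $\mathcal{E}$ is finite and attainment is immediate.
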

\begin{proof} By definition we have that
$d(f_0,L(\mathcal{K}))$ is the infimum of the
$d(f_0,f)$ for $f$ in $L(\mathcal{K})$. We first show that one only
needs to consider the $f$ satisfying the following properties :
\begin{enumerate}
\item $f(\Omega) \subset f_0(\Omega)$. For, if $f \in L(\mathcal{K})$
with $f(\Omega) \not\subset f_0(\Omega)$, let $F \subset f_0(\Omega) \cap f(\Omega)$
maximal for this property. If $f_0(\Omega) \cap f(\Omega) = \emptyset$
then $d(f_0,f) = 1$ and if this is the case for all $f$ the statement is true by our convention
on the infimum. So we can assume $F \neq \emptyset$. Let $x_0 \in F$. We define $\tilde{f} : \Omega \to S$
by $t\mapsto f(t)$ for $t \in f^{-1}(F)$ and $t \mapsto x_0$ on $\Omega \setminus f^{-1}(F)$. Then
$$
d(f_0,\tilde{f}) = \int_{f^{-1}(F)} d(f_0(t),\tilde{f}(t)) \dd t + \int_{ ^c f^{-1}(F)} d(f_0(t),x_0)\dd t
$$
But $\{ t \in\, ^c f^{-1}(F) \ | \ f_0(t) = f(t) \}$ has measure $0$, for otherwise, since $f_0(\Omega)$ is countable
there would be $\Omega_0 \subset f^{-1}(F)$ of positive measure and $x_1 \in S$ such that $f_0(\Omega_0) = f(\Omega_0) = \{ x_1 \}$.
But then $x_1 \not\in F$ since $\Omega_0 \subset \, ^c f^{-1}(F)$, hence $G = F \sqcup \{ x_1 \}$ is a subset of $f_0(\Omega_0) \cap
f(\Omega)$ larger than $F$ hence contradicting its maximality.

Therefore $\int_{ ^c f^{-1}(F)} d(f_0(t),x_0)\dd t \leq \int_{ ^c f^{-1}(F)} d(f_0(t),f(t))\dd t$ and, since
$f(t) = \tilde{f}(t)$ for all $t \in f^{-1}(F)$, we get $d(f_0,\tilde{f}) \leq d(f_0,f)$.
\item $f(\Omega) \subset f_0(\Omega)$ with $f(\Omega)$ maximal among the subsets of $f_0(\Omega)$ belonging
to $\mathcal{K}$. For, if $f \in L(\mathcal{K})$ with $f(\Omega) = F \subset f_0(\Omega)$ and $F \subsetneq G \subset f_0(\Omega)$
with $G \in \mathcal{K}$, then, defining $\tilde{f}$ by $\tilde{f}(t) = f_0(t)$ for $t \in f_0^{-1}(G )$ and
$\tilde{f}(t) = f(t)$ otherwise, we have $\tilde{f}(\Omega) \subset G \in \mathcal{K}$ hence $\tilde{f} \in L(\mathcal{K})$,
and $d(\tilde{f},f_0)< d(f,f_0)$.
\item $f(\Omega) \subset f_0(\Omega)$ with $f(\Omega)$ maximal among the subsets of $f_0(\Omega)$ belonging
to $\mathcal{K}$ with $f(t) = f_0(t)$ for all $t \in f_0^{-1}(f(\Omega))$. For, if $f$ satisfies the previous conditions
and we let $\tilde{f}$ being defined by $\tilde{f}(t) = f_0(t) $ for all $t \in f_0^{-1}(f(\Omega))$, we have $\tilde{f}(\Omega)
= f(\Omega)$ and $d(f_0,\tilde{f}) \leq d(f_0,f)$.
\end{enumerate}
Let then $f : \Omega \to S$ with $F = f(\Omega)$ maximal among the subsets of $f_0(\Omega)$ belonging
to $\mathcal{K}$ with $f(t) = f_0(t)$ for all $t \in f_0^{-1}(f(\Omega))$. Then
$$
d(f_0,f) = \la\left(\Omega \setminus f_0^{-1}(f(\Omega)) \right).
$$
Since all subsets of $f_0(\Omega)$ belonging to $\mathcal{K}$ can be realized as $f(\Omega)$ for some $f \in L(\Omega,S)$,
this proves the claim.
\end{proof}

From this property, one sees easily why the distance does not depend on the
chosen vertex set $S$. Indeed, if $S \subset T$ and $d_S,d_T$ are the
two avatars of the distance, with the two auxiliary functions $\delta_S,\delta_T$,
for any simplicial complexes $\mathcal{K}_1,\mathcal{K}_2$ over $S$, we get that
$$
d_S(f_0,L(\KK_2)) = \inf_{F \in \mathcal{E}} \la\left( \Omega \setminus f_0^{-1}(F)\right) = d_T(f_0,L(\KK_2))
$$
hence 
$$
\delta_T(\KK_1,\KK_2) = \sup_{\stackrel{f_0 \in L(\Omega,T)}{f_0(\Omega) \in \KK_1}} d_T(f_0,L(\KK_2))
= \sup_{\stackrel{f_0 \in L(\Omega,S)}{f_0(\Omega) \in \KK_1}} d_T(f_0,L(\KK_2))
= \sup_{\stackrel{f_0 \in L(\Omega,S)}{f_0(\Omega) \in \KK_1}} d_S(f_0,L(\KK_2))
\delta_S(\KK_1,\KK_2)
$$
and $d_T(\KK_1,\KK_2) = d_S(\KK_1,\KK_2)$.

\medskip
 
For $F \in \mathcal{P}_f^*(S)$, one defines $d(F,\mathcal{K}) = \sup_{f_0(\Omega) = F} d(f_0,L(\mathcal{K}))$.

We denote $\mathcal{T}_n = \{ E \in \mathcal{T} \ | \ \forall F \in E \ \# F \leq n+1 \}$ and $\mathcal{S}_n = \mathcal{T}_n \cap \mathcal{S}$. Let us choose
 $F \in \mathcal{P}_f^*(S)$ with $|F| \leq n+1$ and $\KK \in \mathcal{S}$. Then
$$d(F,\mathcal{K}) = \sup_{f_0(\Omega) = F} d(f_0,L(\mathcal{K})) =
\sup_{f_0(\Omega) = F} \min_{G \in \mathcal{E}} \la\left( \Omega \setminus f_0^{-1}(G)\right) 
$$
with $\mathcal{E}$ is the set of all maximal subsets $G$ of $F$ having the property $G \in \mathcal{K}$. Now, such a number is equal to some
$$
\sup_{f(\Omega) = \{ 0,1\dots, n \}} \min_{G \in \mathcal{C}} \la\left( \Omega \setminus f_0^{-1}(G)\right)  
$$ for $\mathcal{C}$ a collection of sets inside $\PF^*(\{0,\dots, n \})$. Since there is only a finite number of such collections, the set of all possible
values of $d(F,\KK)$ for $|F|\leq n+1$ and $\KK$ running among all possible simplicial complexes is finite. An immediate consequence is the following.

\begin{proposition} \label{prop:finSn}
For any given $n,m \in \N$, the map $d : \mathcal{S}_n \times \mathcal{S}_m \to [0,1]$ has finite image.
\end{proposition}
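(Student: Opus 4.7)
The plan is to deduce the proposition directly from the observation made in the paragraph immediately preceding the statement, which already exhibits the set
$$
V_n := \bigl\{ d(F,\KK) \ | \ F \in \PF^*(S), \ |F| \leq n+1, \ \KK \in \mathcal{S} \bigr\}
$$
as a finite subset of $[0,1]$. Once this is in hand the proposition is essentially a packaging result.

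First, for $\KK_1 \in \mathcal{S}_n$ and $\KK_2 \in \mathcal{S}_m$, I would unwind $\delta(\KK_1,\KK_2)$ by splitting the supremum defining $\delta$ over $f_0 \in L(\KK_1)$ according to the image $F = f_0(\Omega) \in \KK_1$, and recognizing the inner supremum as the already defined quantity $d(F,\KK_2)$:
$$
\delta(\KK_1,\KK_2) = \sup_{f_0 \in L(\KK_1)} d(f_0,L(\KK_2))
= \sup_{F \in \KK_1} \sup_{f_0(\Omega) = F} d(f_0,L(\KK_2))
= \sup_{F \in \KK_1} d(F,\KK_2).
$$
Since $\KK_1 \in \mathcal{S}_n$, every $F \in \KK_1$ satisfies $|F| \leq n+1$, and so each term on the right lies in $V_n$. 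Because $V_n$ is a finite subset of $[0,1]$, the supremum of any nonempty subset of $V_n$ is attained and itself belongs to $V_n$; hence $\delta(\KK_1,\KK_2) \in V_n$. The symmetric computation, with the roles of $\KK_1$ and $\KK_2$ exchanged, gives $\delta(\KK_2,\KK_1) \in V_m$.

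Finally, since $d(\KK_1,\KK_2) = \max\bigl(\delta(\KK_1,\KK_2),\delta(\KK_2,\KK_1)\bigr)$, the image of $d$ on $\mathcal{S}_n \times \mathcal{S}_m$ is contained in the finite set $\{\max(a,b) \ | \ a \in V_n, \ b \in V_m\}$, which yields the claim. I do not anticipate a genuine obstacle: the combinatorial core — that $d(F,\KK)$ takes only finitely many values once $|F|$ is bounded — was already extracted in the preparatory discussion via the reduction of the integrals in lemma \ref{lem:calcdFK} to a supremum over partitions indexed by a finite collection of subsets of $\{0,\dots,n\}$, and the only remaining work is the routine manipulation of suprema above.
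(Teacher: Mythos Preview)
Your proof is correct and is exactly the argument the paper has in mind: the paper establishes the finiteness of $V_n$ in the paragraph preceding the proposition and then simply declares the statement an ``immediate consequence,'' which you have spelled out by decomposing $\delta(\KK_1,\KK_2)$ as $\sup_{F \in \KK_1} d(F,\KK_2)$ and taking the maximum.
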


\subsection{Contracting property of the intersection}

We prove the following.

\begin{proposition} \label{prop:intersectcontract} Let $\mathcal{A},\mathcal{B},\mathcal{K}$ be simplicial complexes over $S$. Then
$\mathcal{A}\cap \mathcal{K}$ and $\mathcal{B}\cap \mathcal{K}$ are simplicial complexes and
$$
d(\mathcal{A} \cap \mathcal{K}, \mathcal{B}\cap \mathcal{K})
\leqslant d(\mathcal{A},\mathcal{B})
$$
\end{proposition}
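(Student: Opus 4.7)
The plan is to reduce to a one-sided Hausdorff bound and then use Lemma \ref{lem:calcdFK} to transport the approximating map into the intersection. The fact that $\mathcal{A}\cap\mathcal{K}$ and $\mathcal{B}\cap\mathcal{K}$ are simplicial complexes is immediate from the downward-closure property: if $F_1\subset F_2$ and $F_2\in\mathcal{A}\cap\mathcal{K}$, then $F_2\in\mathcal{A}$ and $F_2\in\mathcal{K}$, so $F_1$ lies in both. So the content is in the inequality.

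By symmetry it suffices to establish $\delta(\mathcal{A}\cap\mathcal{K},\mathcal{B}\cap\mathcal{K})\leq\delta(\mathcal{A},\mathcal{B})$. I would fix an arbitrary $f_0\in L(\mathcal{A}\cap\mathcal{K})$ and bound $d(f_0,L(\mathcal{B}\cap\mathcal{K}))$ by $\delta(\mathcal{A},\mathcal{B})$. Since $f_0(\Omega)\in\mathcal{A}$, we have $f_0\in L(\mathcal{A})$, so $d(f_0,L(\mathcal{B}))\leq \delta(\mathcal{A},\mathcal{B})$. The key idea is that the optimal $f\in L(\mathcal{B})$ witnessing this distance can be chosen with image inside $f_0(\Omega)$.

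Concretely, if $d(f_0,L(\mathcal{B}))<1$, Lemma \ref{lem:calcdFK} produces an $f\in L(\mathcal{B})$ with $f(\Omega)\subset f_0(\Omega)$ realizing $d(f_0,f)=d(f_0,L(\mathcal{B}))$. Now comes the crucial observation: because $f_0(\Omega)\in\mathcal{K}$ and $\mathcal{K}$ is a simplicial complex, every subset of $f_0(\Omega)$ in particular $f(\Omega)$ belongs to $\mathcal{K}$; therefore $f(\Omega)\in\mathcal{B}\cap\mathcal{K}$, so $f\in L(\mathcal{B}\cap\mathcal{K})$. Consequently
$$
d(f_0,L(\mathcal{B}\cap\mathcal{K}))\leq d(f_0,f)=d(f_0,L(\mathcal{B}))\leq \delta(\mathcal{A},\mathcal{B}).
$$
In the remaining edge case $d(f_0,L(\mathcal{B}))=1$, one already has $\delta(\mathcal{A},\mathcal{B})\geq 1$, and $d(f_0,L(\mathcal{B}\cap\mathcal{K}))\leq 1$ trivially, so the inequality persists. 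Taking the supremum over $f_0\in L(\mathcal{A}\cap\mathcal{K})$ and then combining with the symmetric statement yields $d(\mathcal{A}\cap\mathcal{K},\mathcal{B}\cap\mathcal{K})\leq d(\mathcal{A},\mathcal{B})$.

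I do not expect a serious obstacle: the whole argument is a one-line observation once Lemma \ref{lem:calcdFK} is invoked. The only thing worth being careful about is the convention that the distance is $1$ when no nonempty subset is available, which must be treated separately as above so as not to apply the lemma outside its stated range.
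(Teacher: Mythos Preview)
Your proof is correct and follows essentially the same route as the paper: reduce by symmetry to the one-sided bound, then use Lemma~\ref{lem:calcdFK} together with the observation that any subset of $f_0(\Omega)\in\mathcal{K}$ lies in $\mathcal{K}$. The only cosmetic difference is that the paper works with the infimum formula in Lemma~\ref{lem:calcdFK} to get directly the \emph{equality} $d(F,\mathcal{B}\cap\mathcal{K})=d(F,\mathcal{B})$ for every $F\in\mathcal{K}$ (so no separate treatment of the case $d=1$ is needed), whereas you invoke the ``moreover'' realization clause and handle the edge case by hand.
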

\begin{proof}
By symmetry, one only needs to prove
$\delta(\mathcal{A} \cap \mathcal{K}, \mathcal{B}\cap \mathcal{K})
\leqslant \delta(\mathcal{A},\mathcal{B})$.
We have $\delta(\mathcal{A},\mathcal{B}) = \sup_{F \in \mathcal{A}} d(F,\mathcal{B})$
and 
$$
\delta(\mathcal{A} \cap \mathcal{K},\mathcal{B}\cap \mathcal{K})= \sup_{F \in \mathcal{A}\cap \mathcal{K}} d(F,\mathcal{B}\cap \mathcal{K})
\leqslant \sup_{F \in \mathcal{A}} d(F,\mathcal{B}\cap \mathcal{K}).
$$
Therefore, it is sufficient to prove 
$d(F,L(\mathcal{B})= d(F,L(\mathcal{B}\cap \mathcal{K})$
for any $F \in \mathcal{A} \cap \mathcal{K}$.
Now, by lemma \ref{lem:calcdFK} we have
$$
d(F, \mathcal{B}\cap \mathcal{K}) =
\sup_{f_0(\Omega) = F} d(f_0,\mathcal{B}\cap \mathcal{K})
= \sup_{f_0(\Omega) = F} \inf_{G \in \mathcal{E}}
\la\left( \Omega \setminus f_0^{-1}(G)\right)
$$
where $\mathcal{E}$ is the set of maximal elements
of $\mathcal{F} = \{ \emptyset \neq G \subset F ; G \in \mathcal{B} \cap \mathcal{K} \}$. But since
$F \in \mathcal{K}$, we have $\mathcal{F} = \{ \emptyset \neq G \subset F ; G \in \mathcal{B}  \}$
whence $d(F, \mathcal{B}\cap \mathcal{K}) =  d(F, \mathcal{B})$ and this proves the claim.

\end{proof}

As a consequence, we get the following. Recall that we denote $\mathcal{T}_n = \{ E \in \mathcal{T} \ | \ \forall F \in E \ \# F \leq n+1 \}$ and $\mathcal{S}_n = \mathcal{T}_n \cap \mathcal{S}$.
We have a natural injection $j_n : \mathcal{T}_n \into \mathcal{T}$ as well as a natural
projection $\pi_n : \mathcal{T} \onto \mathcal{T}_n$, defined by $\pi_n(E) = \{ F \in E \ | \ \# F \leq n+1 \}$. One easily checks that
they restrict to maps $\mathcal{S}_n \into \mathcal{S}$ and $\mathcal{S} \onto \mathcal{S}_n$. We have $\pi_n \circ j_n = \mathrm{Id}$,
and $\pi_n \circ \pi_m = \pi_{\min(m,n)}$. We have

\begin{proposition} \label{prop:pinlip} The map $\pi_n : \mathcal{S} \onto \mathcal{S}_n$ is $1$-Lipschitz. Moreover, for
every $\mathcal{K}_1,\mathcal{K}_2 \in \mathcal{S}$, we have
$$
d(\mathcal{K}_1,\mathcal{K}_2) = \sup_n d(\pi_n(\mathcal{K}_1),\pi_n(\mathcal{K}_2)) = \sup_{X \mbox{ finite}} d(\mathcal{K}_1 \cap \PF^*(X),\mathcal{K}_2 \cap \PF^*(X))
$$
\end{proposition}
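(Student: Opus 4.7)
The plan is to derive both assertions from Proposition \ref{prop:intersectcontract} together with Lemma \ref{lem:calcdFK}, by expressing both $\pi_n(\mathcal{K})$ and $\mathcal{K} \cap \PF^*(X)$ as intersections of $\mathcal{K}$ with a suitable simplicial complex. Set $\mathcal{L}_n = \{F \in \PF^*(S) : \#F \leq n+1\}$, which is downward closed and hence a simplicial complex, and note that $\pi_n(\mathcal{K}) = \mathcal{K} \cap \mathcal{L}_n$; similarly $\PF^*(X)$ is a simplicial complex for every finite $X \subset S$. Proposition \ref{prop:intersectcontract} then yields at once $d(\pi_n(\mathcal{K}_1), \pi_n(\mathcal{K}_2)) \leq d(\mathcal{K}_1, \mathcal{K}_2)$ (the $1$-Lipschitz claim) as well as $d(\mathcal{K}_1 \cap \PF^*(X), \mathcal{K}_2 \cap \PF^*(X)) \leq d(\mathcal{K}_1, \mathcal{K}_2)$, giving the ``$\sup \leq d$'' halves of both equalities.

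For the reverse inequalities, by symmetry it suffices to bound $\delta(\mathcal{K}_1, \mathcal{K}_2) = \sup_{F \in \mathcal{K}_1} d(F, \mathcal{K}_2)$. The central point, which requires a small verification, is the identity
$$
d(F, \mathcal{K}_2) = d(F, \pi_n(\mathcal{K}_2)) = d(F, \mathcal{K}_2 \cap \PF^*(F))
$$
valid whenever $F \in \mathcal{K}_1$ and $n = \#F - 1$. Indeed, by Lemma \ref{lem:calcdFK}, $d(F, \mathcal{K}')$ depends only on the family of maximal subsets $G$ of $F$ lying in $\mathcal{K}'$; and for any $G \subset F$, the three conditions $G \in \mathcal{K}_2$, $G \in \pi_n(\mathcal{K}_2)$, and $G \in \mathcal{K}_2 \cap \PF^*(F)$ are mutually equivalent, the first because $\#G \leq \#F = n+1$ is automatic, the second because $G \subset F$ automatically places $G$ in $\PF^*(F)$.

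Combining these, for $F \in \mathcal{K}_1$ with $n = \#F - 1$ one has $F \in \pi_n(\mathcal{K}_1)$ and $F \in \mathcal{K}_1 \cap \PF^*(F)$, so
$$
d(F, \mathcal{K}_2) = d(F, \pi_n(\mathcal{K}_2)) \leq \delta(\pi_n(\mathcal{K}_1), \pi_n(\mathcal{K}_2)) \leq \sup_m d(\pi_m(\mathcal{K}_1), \pi_m(\mathcal{K}_2)),
$$
and likewise $d(F, \mathcal{K}_2) = d(F, \mathcal{K}_2 \cap \PF^*(F)) \leq \sup_X d(\mathcal{K}_1 \cap \PF^*(X), \mathcal{K}_2 \cap \PF^*(X))$. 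Taking the supremum over $F \in \mathcal{K}_1$ controls $\delta(\mathcal{K}_1, \mathcal{K}_2)$; the symmetric argument handles $\delta(\mathcal{K}_2, \mathcal{K}_1)$, and together they give both equalities after passing to the max. The main obstacle is really only this clean identification of maximal subsets in the restricted complexes; everything else is formal once Proposition \ref{prop:intersectcontract} is in hand.
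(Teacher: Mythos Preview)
Your proof is correct and follows essentially the same approach as the paper's: both identify $\pi_n(\mathcal{K})$ as an intersection $\mathcal{K}\cap\Delta(n)$ and invoke Proposition~\ref{prop:intersectcontract} for the Lipschitz bound, and both establish the reverse inequalities by observing that for $F\in\mathcal{K}_1$ the quantity $d(F,\mathcal{K}_2)$ only sees subsets of $F$ inside $\mathcal{K}_2$, which are unchanged upon passing to $\pi_n(\mathcal{K}_2)$ or $\mathcal{K}_2\cap\PF^*(F)$. The paper carries this out at the level of functions $f\in L(\mathcal{K}_1)$ with $|f(\Omega)|\leq n+1$, whereas you work directly with the simplex $F$ and Lemma~\ref{lem:calcdFK}; this is a cosmetic difference only.
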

\begin{proof}
Let $\Delta(n) = \{ F \in \mathcal{P}_f^*(S) ; |F| \leq n+1 \}$. Then $\Delta(n)$ is a simplicial complex,
and, for any $\mathcal{K}$, we have $\pi_n(\mathcal{K})
 = \mathcal{K} \cap \Delta(n)$. From the above proposition this implies that each $\pi_n$ is 1-Lipschitz.

Now, $\delta(\mathcal{K}_1,\mathcal{K}_2)$ is equal to
$$
\delta(\mathcal{K}_1,\mathcal{K}_2)=
 \sup_{f \in L(\mathcal{K}_1)} d(f,L(\mathcal{K}_2))
= \sup_{n \geq 0} \sup_{\stackrel{f \in L(\mathcal{K}_1)}{\{ |f(\Omega)| \leq n+1\}}} d(f,L(\mathcal{K}_2))
= \sup_{n \geq 0} \sup_{\stackrel{f \in L(\mathcal{K}_1)}{\{ |f(\Omega)| \leq n+1\}}} d(f,L(\pi_n(\mathcal{K}_2)))
$$
{}
$$
= \sup_{n \geq 0} \sup_{f \in L(\pi_n(\mathcal{K}_1))} d(f,\pi_n(L(\mathcal{K}_2))) = \sup_{n\geq 0}\delta(\pi_n(\mathcal{K}_1),\pi_n(\mathcal{K}_2))
$$
hence $d(\mathcal{K}_1,\mathcal{K}_2) = \sup_n d(\pi_n(\mathcal{K}_1),\pi_n(\mathcal{K}_2))$. Finally,
for any $X$ we have $d(\mathcal{K}_1 \cap \PF^*(X),\mathcal{K}_2 \cap \PF^*(X))\leq d(\mathcal{K}_1,\mathcal{K}_2)$ hence
$\sup_{X \mbox{ finite}} d(\mathcal{K}_1 \cap \PF^*(X),\mathcal{K}_2 \cap \PF^*(X)) \leq d(\mathcal{K}_1,\mathcal{K}_2)$. Now,
$$\delta(\mathcal{K}_1,\mathcal{K}_2)
= \sup_{f \in L(\mathcal{K}_1)} d(f,L(\mathcal{K}_2))
=\sup_{X \mbox{ finite}} \sup_{f \in L(\mathcal{K}_1\cap \PF^*(X))} d(f,L(\mathcal{K}_2))
$${}$$\leqslant
 \sup_{X \mbox{ finite}} \sup_{f \in L(\mathcal{K}_1\cap \PF^*(X))} d(f,L(\mathcal{K}_2\cap \PF^*(X)))
 \leqslant
 \sup_{X \mbox{ finite}} \delta(\mathcal{K}_1\cap \PF^*(X),\mathcal{K}_2\cap \PF^*(X))
 $${}$$
 \leqslant
 \sup_{X \mbox{ finite}} d(\mathcal{K}_1\cap \PF^*(X),\mathcal{K}_2\cap \PF^*(X)) 
 $$
 hence
$$
d(\mathcal{K}_1,\mathcal{K}_2) =
\max(\delta(\mathcal{K}_1,\mathcal{K}_2),\delta(\mathcal{K}_2,\mathcal{K}_1)) \leqslant
 \sup_{X \mbox{ finite}} d(\mathcal{K}_1\cap \PF^*(X),\mathcal{K}_2\cap \PF^*(X)) 
\leqslant d(\mathcal{K}_1,\mathcal{K}_2)
$$ 
and this concludes the proof.
\end{proof}

\begin{corollary} \label{cor:dist1vertex}
For every $\mathcal{K}_1,\mathcal{K}_2 \in \mathcal{S}$,
$d(\mathcal{K}_1,\mathcal{K}_2) = 1$ unless
$\bigcup \mathcal{K}_1 = \bigcup \mathcal{K}_2$.
\end{corollary}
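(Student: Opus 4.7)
The plan is to prove the contrapositive: if $\bigcup \mathcal{K}_1 \neq \bigcup \mathcal{K}_2$, then $d(\mathcal{K}_1,\mathcal{K}_2) = 1$. By symmetry of $d$, I may assume there exists a vertex $x \in \bigcup \mathcal{K}_1 \setminus \bigcup \mathcal{K}_2$. Since $x$ lies in some face of the simplicial complex $\mathcal{K}_1$, downward closure gives $\{x\} \in \mathcal{K}_1$. On the other hand, $x \notin \bigcup \mathcal{K}_2$ means $\{x\} \notin \mathcal{K}_2$; in fact no non-empty subset of $\{x\}$ lies in $\mathcal{K}_2$.

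The key step is to test $\delta(\mathcal{K}_1,\mathcal{K}_2)$ against the constant function $f_0 : \Omega \to S$, $f_0 \equiv x$. Then $f_0(\Omega) = \{x\} \in \mathcal{K}_1$, so $f_0 \in L(\mathcal{K}_1)$. Applying Lemma \ref{lem:calcdFK} to compute $d(f_0, L(\mathcal{K}_2))$, the relevant set $\mathcal{E}$ of maximal subsets $F$ of $f_0(\Omega) = \{x\}$ with $F \in \mathcal{K}_2$ is empty, since the only non-empty candidate $\{x\}$ fails to belong to $\mathcal{K}_2$. By the convention of the lemma, $d(f_0,L(\mathcal{K}_2)) = 1$.

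Consequently
\[
d(\mathcal{K}_1,\mathcal{K}_2) \geq \delta(\mathcal{K}_1,\mathcal{K}_2) = \sup_{f \in L(\mathcal{K}_1)} d(f,L(\mathcal{K}_2)) \geq d(f_0,L(\mathcal{K}_2)) = 1,
\]
and since the ambient distance is bounded above by $1$, we conclude $d(\mathcal{K}_1,\mathcal{K}_2) = 1$. There is no real obstacle here: the whole argument is a one-line application of Lemma \ref{lem:calcdFK} to the well-chosen test map $f_0$, the only point being that the empty-$\mathcal{E}$ convention precisely encodes the fact that a constant function at an absent vertex has maximal distance to the other complex.
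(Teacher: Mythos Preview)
Your proof is correct. It differs from the paper's argument: the paper first invokes Proposition~\ref{prop:pinlip} to reduce to the $0$-skeleta via $d(\mathcal{K}_1,\mathcal{K}_2) \geq d(\pi_0(\mathcal{K}_1),\pi_0(\mathcal{K}_2))$, and then observes that two $0$-dimensional complexes are at distance $0$ or $1$ according to whether their vertex sets agree. Your route is more direct: you simply exhibit the constant test map $f_0 \equiv x$ and read off $d(f_0,L(\mathcal{K}_2))=1$ from the empty-$\mathcal{E}$ convention in Lemma~\ref{lem:calcdFK}. This avoids the machinery of Proposition~\ref{prop:pinlip} (and hence the intersection-contracting property it rests on), at the small cost of not illustrating how the corollary fits into the general skeleton-projection framework; the paper's detour, by contrast, makes the corollary an instance of the broader principle that distances are controlled by skeleta.
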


\begin{proof}
We have $d(\mathcal{K}_1,\mathcal{K}_2) \geq
d(\pi_0(\mathcal{K}_1),\pi_0(\mathcal{K}_2))$. Now, if $\mathcal{K}_1,\mathcal{K}_2$ have only 0-simplices, it is easily checked that $d(\mathcal{K}_1,\mathcal{K}_2) \in \{ 0, 1 \}$ with  $d(\mathcal{K}_1,\mathcal{K}_2) = 1$ if and only if $\mathcal{K}_1=\mathcal{K}_2$, which is clearly equivalent to $\bigcup\mathcal{K}_1 = \bigcup\mathcal{K}_2$.
\end{proof}

We now consider the probability law map $\Psi : L(\Omega,S) \to M(S) = |\mathcal{P}_f^*(S)|_1$ and the induced
map $L(\KK) \to |\KK|_1$ which were studied in \cite{SRV}. Every $\overline{|\KK|_1}$ being closed and
bounded inside $|\mathcal{P}_f^*(S)|_1$ we can consider the Hausdorff distance between two of them,
with respect to the original metric on
$M(S)$
given by 
$$
d(\alpha,\beta) =
\frac{1}{2}|\alpha-\beta|_1 = \frac{1}{2}\sum_{s \in S} |\alpha(s)- \beta(s)|$$
We denote $S_{\alpha}$ the support of $\alpha$.
We notice that, by lemma \ref{lem:calcdFK}, $d(f_0,L(\KK))$
depends only on $\Psi(f_0)$ and $\KK$.

\begin{proposition} For $\alpha : S \to [0,1]$ with $\sum_s \alpha(s)=1$, and $\mathcal{K}$ a simplicial complex,
we have
$$
d(\alpha,|\KK|_1) = \frac{1}{2}\inf_{\stackrel{S_{\beta} \in \KK}{S_{\beta} \subset S_{\alpha}}} \sum_{s \in S_{\alpha}}|\alpha(s) - \beta(s)| =  \inf_{\stackrel{F \in \KK}{F \subset S_{\alpha}}} \left( 1 - \sum_{s \in F} \alpha(s) \right)
$$
When $f_0 \in L(\Omega,S)$, we have
$$
d(\Psi(f_0) , |\KK|_1) =  d(f_0,L(\KK)).
$$
\end{proposition}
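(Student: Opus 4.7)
The plan is to prove Part~1 by an explicit optimization in $\beta$, then derive Part~2 immediately from Lemma~\ref{lem:calcdFK}. For the easy direction of Part~1, given any $F \in \KK$ with $F \subset S_\alpha$, I would choose a probability distribution $\beta$ supported on $F$ with $\beta(s) \geq \alpha(s)$ for every $s \in F$ (possible since $\sum_{s \in F}\alpha(s) \leq 1$); then a direct computation gives $|\alpha-\beta|_1 = \sum_{s \in F}(\beta(s)-\alpha(s)) + \sum_{s \in S_\alpha \setminus F}\alpha(s) = 2\bigl(1 - \sum_{s \in F}\alpha(s)\bigr)$, so $d(\alpha,\beta) = 1 - \sum_{s \in F}\alpha(s)$. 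This yields $\leq$ simultaneously for $d(\alpha,|\KK|_1)$ and for the middle expression.

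For the matching lower bound, given any $\beta \in |\KK|_1$ with support $G \in \KK$, set $F = G \cap S_\alpha$. If $F = \emptyset$ then $|\alpha-\beta|_1 = 2$ and the bound is immediate; otherwise $F \in \KK$ by the simplicial complex property. Splitting $|\alpha-\beta|_1 = \sum_{s \in F}|\alpha(s)-\beta(s)| + \sum_{s \in S_\alpha \setminus F}\alpha(s) + \sum_{s \in G \setminus F}\beta(s)$, writing $a = \sum_{s \in F}\alpha(s)$ and $b = \sum_{s \in F}\beta(s)$, and using $\sum_{s \in F}|\alpha(s)-\beta(s)| \geq |a-b|$, one obtains $|\alpha-\beta|_1 \geq |a-b| + (1-a) + (1-b) = 2 - 2\min(a,b) \geq 2(1-a)$, i.e.\ $d(\alpha,\beta) \geq 1 - \sum_{s \in F}\alpha(s)$. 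Taking the infimum over $\beta$ proves the lower bound in Part~1 and simultaneously shows that the restriction $S_\beta \subset S_\alpha$ in the middle expression costs nothing, since $F \subset S_\alpha$ and $F \in \KK$ throughout.

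For Part~2, Lemma~\ref{lem:calcdFK} gives $d(f_0,L(\KK)) = \inf_{F \in \mathcal{E}}\la(\Omega\setminus f_0^{-1}(F))$, and setting $\alpha = \Psi(f_0)$ one has $\la(f_0^{-1}(F)) = \sum_{s \in F}\alpha(s)$, so this rewrites as $\inf_{F \in \mathcal{E}}\bigl(1 - \sum_{s \in F}\alpha(s)\bigr)$. It remains to check that this coincides with the Part~1 infimum: any $F \in \mathcal{E}$ may be intersected with $S_\alpha$ without changing $\sum_F \alpha$ (vertices with $\alpha(s)=0$ contribute nothing), and conversely every $F \in \KK$ with $F \subset S_\alpha \subset f_0(\Omega)$ embeds in some maximal $F' \in \mathcal{E}$ with $\sum_{F'}\alpha \geq \sum_F \alpha$, so the two infima agree. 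I expect the main obstacle to be the lower bound in Part~1, where the splitting of $|\alpha-\beta|_1$ over the three regions $F$, $S_\alpha\setminus F$, $G\setminus F$ has to be set up carefully before the triangle inequality is applied; everything else is bookkeeping.
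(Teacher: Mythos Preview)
Your proof is correct, and it takes a genuinely different route from the paper's.

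The paper proceeds by two successive reductions on $\beta$: first it shows one may assume $S_\beta \subset S_\alpha$ by redistributing the mass of $\beta$ lying outside $S_\alpha$ uniformly over $S_\alpha \cap S_\beta$, and then, for fixed $F \supset S_\beta$, it shows one may assume $\beta(s) \geq \alpha(s)$ for all $s \in F$ via an $\eps$-perturbation argument (shifting mass from a point where $\beta > \alpha$ to one where $\beta < \alpha$ strictly decreases $|\alpha - \beta|_1$). Only after both reductions does it evaluate $|\alpha - \beta|_1$.

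Your argument bypasses both reduction steps. The upper bound is the same explicit construction, but your lower bound is more direct: for an arbitrary $\beta$ you set $F = S_\beta \cap S_\alpha$, split $|\alpha - \beta|_1$ over the three regions, and use the single inequality $\sum_F |\alpha(s)-\beta(s)| \geq |a-b|$ together with the identity $|a-b| + (1-a) + (1-b) = 2 - 2\min(a,b)$. This handles in one stroke what the paper does in two stages, and it makes transparent why restricting to $S_\beta \subset S_\alpha$ costs nothing (the intersection $F$ already lies in $\KK$ and in $S_\alpha$). The paper's approach is perhaps more constructive in that it exhibits an explicit minimizing sequence; yours is shorter and gives the sharp bound immediately.

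For Part~2 both proofs simply invoke Lemma~\ref{lem:calcdFK}; your added remark that one may freely pass between maximal $F \subset f_0(\Omega)$ and arbitrary $F \subset S_\alpha$ (since vertices with $\alpha(s)=0$ contribute nothing and every such $F$ sits inside some maximal one) is a useful clarification that the paper leaves implicit.
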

\begin{proof}
Let $\beta \in |\KK|_1$, and define $\hat{\beta} : S \to [0,1]$ by $\hat{\beta}(s) = 0$ if $s \not\in S_{\alpha}$,
and $\hat{\beta}(s) = \beta(s) + K$ is $s \in S_{\alpha}$
with 
$$
K = \frac{\sum_{s \not\in S_{\alpha}} \beta(s)}{\#S_{\alpha}\cap S_{\beta}}
$$ 
Then 
$$
|\alpha - \hat{\beta}|_1 = \sum_{s \in S_{\alpha} \setminus S_{\beta}} \alpha(s) + \sum_{s \in S_{\alpha} \cap S_{\beta}} |\alpha(s) - \beta(s) - K|
\leqslant  \sum_{s \in S_{\alpha} \setminus S_{\beta}} \alpha(s) + \sum_{s \in S_{\alpha} \cap S_{\beta}} |\alpha(s) - \beta(s)  | + K \# S_{\alpha} \cap S_{\beta}
$$
{}
$$
\leqslant  \sum_{s \in S_{\alpha} \setminus S_{\beta}} \alpha(s) + \sum_{s \in S_{\alpha} \cap S_{\beta}} |\alpha(s) - \beta(s)  | + \sum_{s \not\in S_{\alpha}} \beta(s) = |\alpha - \beta|_1
$$
and it follows that 
$$
d(\alpha,|\KK|_1) = \frac{1}{2}\inf_{\stackrel{S_{\beta} \in \KK}{S_{\beta} \subset S_{\alpha}}} |\alpha - \beta|_1
= \frac{1}{2}\inf_{\stackrel{S_{\beta} \in \KK}{S_{\beta} \subset S_{\alpha}}} \sum_{s \in S_{\alpha}}|\alpha(s) - \beta(s)|
$$
Now let $F \in \KK$ with $F \subset S_{\alpha}$ and $\beta \in L(\mathcal{K})$ with $S_{\beta} \subset F$. Assume that
$\beta(s_0) < \alpha(s_0)$ for some $s_0 \in F$. Since
$$
\sum_{s \in F} \beta(s) - \alpha(s) = 1 - \sum_{s \in F}\alpha(s) \geqslant 0
$$
there exists $s_1 \in F$ with $\beta(s_1) > \alpha(s_1)$. Let us set $\hat{\beta} : S \to [0,1]$ defined by
$$
\begin{array}{lcll}
s & \mapsto & \beta(s) & \mbox{ if } s \not\in \{ s_0,s_1 \} \\
s_0 & \mapsto & \beta(s_0)+\eps \\
s_1 & \mapsto & \beta(s_1)-\eps 
\end{array}
$$
Then $S_{\hat{\beta}} \subset F$ and 
$$
\left| \hat{\beta}(s_1) - \alpha(s_1) \right| = 
\hat{\beta}(s_1) - \alpha(s_1) = \beta(s_1) - \alpha(s_1) - \eps 
= \left| \beta(s_1) - \alpha(s_1) \right| - \eps  
$$
while
$$
\left| \hat{\beta}(s_0) - \alpha(s_0) \right| = 
\alpha(s_0)-\hat{\beta}(s_0)   = \alpha(s_0) - \beta(s_0)  - \eps 
= \left|  \alpha(s_0) - \beta(s_0) \right| - \eps  
$$
Now, we have
$$
\sum_{s \in S_{\alpha} } \left|\alpha(s) - \beta(s)\right| =
\sum_{s \in S_{\alpha} \setminus S_{\beta}} \alpha(s) +
Z(\beta) \mbox{ with } Z(\beta) =  \sum_F 
\left| \beta(s) - \alpha(s) \right|
$$
and
$Z(\hat{\beta}) \leq Z(\beta) - 2 \eps$. It follows that,
when computing the infimum, we can assume
that $\beta(s) \geq \alpha(s)$ for all $s \in F$, whence
$$
\inf_{\stackrel{S_{\beta} \in \KK}{S_{\beta} \subset S_{\alpha}}} \sum_{s \in S_{\alpha}}|\alpha(s) - \beta(s)|
= \inf_{\stackrel{S_{\beta} \in \KK}{S_{\beta} \subset S_{\alpha}}} 
\sum_{s \in S_{\alpha} \setminus S_{\beta}} \alpha(s) +
\sum_F 
 \left(\beta(s) - \alpha(s) \right)
= \inf_{\stackrel{S_{\beta} \in \KK}{S_{\beta} \subset S_{\alpha}}} 
\sum_{s \in F} \beta(s) + \sum_{s \in S_{\alpha}} \alpha(s) - 2 \sum_{s \in F}\alpha(s)
$$
that is
$$
2\left( 1  - \sum_{s \in F}\alpha(s) \right)
$$
and this proves the claim, the last equality being then
an obvious consequence of lemma \ref{lem:calcdFK}.

\end{proof}

\begin{corollary} \label{cor:dLKdMK}
Let $\mathcal{K}$ and $\mathcal{K}'$ denote two simplicial
complexes over $S$. Let us also denote $d$ the Hausdorff pseudo-distance between bounded subsets of $M(S)$. Then
$$
d\left(|\mathcal{K}|_1,|\mathcal{K}'|_1\right) =  d\left(L(\mathcal{K}), L(\mathcal{K}')\right).
$$ 
\end{corollary}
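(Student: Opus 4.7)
The plan is to reduce this corollary directly to the preceding proposition, which already identifies the two relevant pointwise distances via $d(f_0,L(\mathcal{K}')) = d(\Psi(f_0),|\mathcal{K}'|_1)$. Expanding the Hausdorff distance as a max of two one-sided $\delta$'s and arguing by symmetry, it suffices to prove
$$\delta(L(\mathcal{K}),L(\mathcal{K}')) = \delta(|\mathcal{K}|_1,|\mathcal{K}'|_1),$$
together with the same identity after swapping $\mathcal{K}$ and $\mathcal{K}'$.

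By the previous proposition, the left-hand side equals
$$\sup_{f_0 \in L(\mathcal{K})} d(f_0,L(\mathcal{K}')) = \sup_{f_0 \in L(\mathcal{K})} d(\Psi(f_0),|\mathcal{K}'|_1).$$
What remains is to verify that this supremum coincides with $\sup_{\alpha \in |\mathcal{K}|_1} d(\alpha,|\mathcal{K}'|_1) = \delta(|\mathcal{K}|_1,|\mathcal{K}'|_1)$. For this I would check that the map $\Psi$ sends $L(\mathcal{K})$ onto $|\mathcal{K}|_1$. Given any $\alpha \in |\mathcal{K}|_1$ with finite support $F = \{s_1,\dots,s_k\} \in \mathcal{K}$, use that $\Omega$ is standard to partition it into measurable pieces $\Omega_1,\dots,\Omega_k$ with $\lambda(\Omega_i) = \alpha(s_i)$, and set $f(t) = s_i$ on $\Omega_i$. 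Then $f(\Omega) = F \in \mathcal{K}$, so $f \in L(\mathcal{K})$, and $\Psi(f) = \alpha$ by construction. The inclusion $\Psi(L(\mathcal{K})) \subset |\mathcal{K}|_1$ being obvious, this gives the surjectivity.

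With $\Psi(L(\mathcal{K})) = |\mathcal{K}|_1$, the two suprema over $f_0 \in L(\mathcal{K})$ and $\alpha \in |\mathcal{K}|_1$ coincide, and the one-sided identity $\delta(L(\mathcal{K}),L(\mathcal{K}')) = \delta(|\mathcal{K}|_1,|\mathcal{K}'|_1)$ follows. Swapping the roles of $\mathcal{K}$ and $\mathcal{K}'$ and taking the max of the two $\delta$'s then yields the corollary. There is no serious obstacle here: all of the analytic content has already been absorbed into the preceding proposition, and the only extra ingredient is the elementary fact that every finitely supported probability with support in $\mathcal{K}$ is realized by some $f \in L(\mathcal{K})$.
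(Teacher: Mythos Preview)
Your proof is correct and follows essentially the same route as the paper: reduce to the one-sided identity $\delta(L(\mathcal{K}),L(\mathcal{K}')) = \delta(|\mathcal{K}|_1,|\mathcal{K}'|_1)$, invoke the preceding proposition for the pointwise equality $d(f_0,L(\mathcal{K}')) = d(\Psi(f_0),|\mathcal{K}'|_1)$, and use surjectivity of $\Psi : L(\mathcal{K}) \to |\mathcal{K}|_1$ to match the two suprema. The only difference is cosmetic: the paper simply cites the surjectivity of $\Psi$ (established in \cite{SRV}), whereas you spell out the elementary construction.
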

\begin{proof}
We have
$$
\delta(|\mathcal{K}|_1,|\mathcal{K}'|_1) = \sup_{\mu \in |\mathcal{K}_1|} d(\mu, |\mathcal{K}'_1|)
 = \sup_{f_0 \in L(\mathcal{K})} d(\Psi(f_0), |\mathcal{K}'_1|)
 $$
by surjectivity of $\Psi$. Now, by the proposition, this is equal to
$$
 \sup_{f_0 \in L(\mathcal{K})}  d(f_0, L(\mathcal{K}'))
= 
 \delta(L(\mathcal{K}), L(\mathcal{K}'))$$
whence $d(|\mathcal{K}|_1,|\mathcal{K}'|_1)$ is equal to
$$
 \max\left(\delta(|\mathcal{K}|_1,|\mathcal{K}'|_1),
\delta(|\mathcal{K}'|_1,|\mathcal{K}|_1)\right)
= \max\left(\delta(L(\mathcal{K}),L(\mathcal{K}')),
\delta(L(\mathcal{K}'),L(\mathcal{K}))\right)
= d(L(\mathcal{K}),L(\mathcal{K}')
$$
and this proves the claim.
\end{proof}
\subsubsection{Upper and lower bounds}

We first need a technical lemma.

\begin{lemma} \label{lem:optimrparts} Let  $n \geq 1$, $1 \leq r \leq n$, $E = \{1,\dots, n \}$, and $\Delta_n = \{ (a_1,\dots,a_n) \in [0,1]^n \ | \ \sum_{i=1}^n a_i = 1 \}$ the $n$-dimensional
simplex. Then
$$
\sup_{ (a_i)_{i=1,\dots,n} \in \Delta_n } \min_{\stackrel{D \subset E}{|D|=r}} \left( 1 - \sum_{i \in D} a_i \right) = \frac{n-r}{n}
$$
\end{lemma}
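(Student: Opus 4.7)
The plan is to reduce the min-max to a statement about the top $r$ order statistics of a probability vector. For any fixed $(a_1,\dots,a_n) \in \Delta_n$, the inner minimum $\min_{|D|=r}(1-\sum_{i\in D}a_i) = 1 - \max_{|D|=r}\sum_{i\in D}a_i$ is attained by the subset $D^*$ consisting of the indices of the $r$ largest coordinates. So if I reorder so that $a_1 \geq a_2 \geq \cdots \geq a_n$ and set $S_r = a_1+\cdots+a_r$, the quantity to study becomes simply $1 - S_r$, and the claim reduces to
\[
\sup_{a \in \Delta_n}(1-S_r) = \frac{n-r}{n}.
\]

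For the lower bound (i.e.\ realizing the value $(n-r)/n$), I would exhibit the uniform point $a_i = 1/n$: then every subset $D$ of size $r$ gives $\sum_{i\in D}a_i = r/n$, so $1 - \sum_{i\in D}a_i = (n-r)/n$, and the min over $D$ is $(n-r)/n$. This shows the supremum is at least $(n-r)/n$.

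For the upper bound I would prove $S_r \geq r/n$ for every $a \in \Delta_n$, which gives $1-S_r \leq (n-r)/n$. The key inequality comes from comparing the top $r$ entries to the bottom $n-r$: since $a_1 \geq \cdots \geq a_n$ (after reordering), I have $S_r \geq r a_r$ and $1 - S_r = \sum_{i=r+1}^n a_i \leq (n-r) a_r$. Multiplying the second inequality by $r$ and using $r a_r \leq S_r$ yields $r(1-S_r) \leq (n-r) S_r$, that is $r \leq n S_r$, i.e.\ $S_r \geq r/n$. Combined with the uniform example, this proves equality.

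The main (and only) subtle step is the averaging inequality $S_r \geq r/n$ in the upper bound; everything else is bookkeeping. The argument above handles it cleanly via the two-sided comparison to $a_r$, avoiding any heavier convexity or Lagrange-multiplier machinery.
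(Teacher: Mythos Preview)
Your proof is correct. Both you and the paper use the uniform point $a_i=1/n$ for the lower bound, but the upper bound arguments differ.

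You first identify the minimizing $D$ as the indices of the $r$ largest coordinates, reduce to proving $S_r\geq r/n$ for the sum $S_r$ of the top $r$ entries, and then obtain this via the sandwich $r a_r\leq S_r$ and $1-S_r\leq (n-r)a_r$. The paper instead argues by contradiction and double counting: if some $(a_i)$ gave $\sum_{i\in D}a_i<r/n$ for \emph{every} $r$-subset $D$, then summing over all $\binom{n}{r}$ subsets yields $\sum_D\sum_{i\in D}a_i<\binom{n}{r}\frac{r}{n}=\binom{n-1}{r-1}$, while the left side equals $\binom{n-1}{r-1}\sum_i a_i=\binom{n-1}{r-1}$, a contradiction. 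Your approach is more direct in that it isolates the single extremal subset and needs only elementary order-statistic inequalities; the paper's averaging argument is more symmetric and avoids any reordering or case analysis on which $D$ is optimal. Both are equally elementary, and either would serve the subsequent application in Proposition~\ref{prop:uplowbound}.
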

\begin{proof} Let us denote $\beta(n,r)$ the LHS. For $(a_1,\dots,a_n) = (1/n,\dots,1/n) \in \Delta_n$,
we have
$$
\min_{\stackrel{D \subset E}{|D|=r}} \left( 1 - \sum_{i \in D} a_i \right)
= \min_{\stackrel{D \subset E}{|D|=r}} \frac{n-r}{n} = \frac{n-r}{n}
$$
hence $\beta(n,r) \geq (n-r)/r$. If we had $\beta(n,r) > (n-r)/r$ then there would exist $(a_1,\dots,a_n) \in \Delta_n$
such that, for all  $D \subset E$ with $|D| = r$, we have $1 - \sum_{i \in D} a_i > (n-r)/n$
that is $\sum_{i \in D} a_i < r/n$. But this implies
$$
\sum_{\stackrel{D \subset E}{|D| = r}} \sum_{i \in D} a_i < \sum_{\stackrel{D \subset E}{|D| = r}} \frac{r}{n} = 
\left( \begin{array}{c} n \\ r \end{array} \right) \frac{r}{n} = \left( \begin{array}{c} n-1 \\ r-1 \end{array} \right) 
$$
and on the other hand we have
$$
\sum_{\stackrel{D \subset E}{|D| = r}} \sum_{i \in D} a_i
 = \sum_{i=1}^n \sum_{\begin{array}{c} D \subset E \\ |D| = r \\ i \in D \end{array}} a_i
 = \sum_{i=1}^n a_i \times  \# \{ D \subset E \setminus \{ i \} ;   |D| = r-1 \}
 $$
 hence
$$
\sum_{\stackrel{D \subset E}{|D| = r}} \sum_{i \in D} a_i 
 = \left( \sum_{i=1}^n a_i \right) \times \left( \begin{array}{c} n-1 \\ r-1 \end{array} \right) =\left( \begin{array}{c} n-1 \\ r-1 \end{array} \right) 
$$
and this contradiction proves the claim.
\end{proof}

Recall that, for $F \in \mathcal{P}_f^*(S)$, we use the notation $d(F,\mathcal{K}) = \sup_{f_0(\Omega) = F} d(f_0,L(\mathcal{K}))$. 

\begin{proposition} \label{prop:uplowbound} Let $F \in \mathcal{P}_f^*(S)$ with $|F| = n \geq 1$, and $\mathcal{K} \subset \mathcal{P}_{f}^*(S)$.
\begin{enumerate}
\item If $H$ is a maximal subset of $F$ belonging to $\mathcal{K}$, then $d(F,\mathcal{K}) \geq 1 - |H|/n$. In particular, if $F \not\in \mathcal{K}$,
then $d(F, \mathcal{K}) \geq 1/n$.
\item If $\mathcal{K}$ contains all subsets of cardinality $r$ of $F$ for some $r \geq 1$, then $d(F, \mathcal{K}) \leq 1 - r/n$.
\end{enumerate}
\end{proposition}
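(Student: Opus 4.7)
The plan is to deduce both inequalities from lemma \ref{lem:calcdFK} by applying it to judiciously chosen maps $f_0 \in L(\Omega, S)$ with $f_0(\Omega) = F$. Part (1) follows from a single uniform test map; part (2) from ranging over all $f_0$ and invoking the combinatorial computation of lemma \ref{lem:optimrparts}.

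For part (1), I would take $f_0$ to be the uniform map onto $F$: partition $\Omega$ into $n$ pieces of measure $1/n$ and send each onto a distinct element of $F$, so that $\la(f_0^{-1}(G)) = |G|/n$ for every $G \subset F = f_0(\Omega)$. Lemma \ref{lem:calcdFK} then yields
\[
d(f_0, L(\mathcal{K})) \;=\; \min_{G \in \mathcal{E}} \left(1 - \frac{|G|}{n}\right),
\]
where $\mathcal{E}$ is the set of maximal subsets of $F$ belonging to $\mathcal{K}$. Since $H \in \mathcal{E}$ has maximal cardinality, this minimum equals $1 - |H|/n$, so $d(F, \mathcal{K}) \geq d(f_0, L(\mathcal{K})) = 1 - |H|/n$. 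The ``in particular'' statement is then immediate: if $F \notin \mathcal{K}$, any such $H$ satisfies $|H| \leq n - 1$, whence $d(F, \mathcal{K}) \geq 1/n$.

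For part (2), I would take any $f_0$ with $f_0(\Omega) = F$ and set $\alpha_s = \la(f_0^{-1}(\{s\}))$, so that $(\alpha_s)_{s \in F}$ is a point of the simplex $\Delta_n$ of lemma \ref{lem:optimrparts}. By hypothesis, every $r$-subset $D \subset F$ belongs to $\mathcal{K}$ and is hence contained in some maximal $G \in \mathcal{E}$; since $f_0^{-1}(D) \subset f_0^{-1}(G)$, one gets $\la(\Omega \setminus f_0^{-1}(G)) \leq 1 - \sum_{s \in D}\alpha_s$, and combining with lemma \ref{lem:calcdFK},
\[
d(f_0, L(\mathcal{K})) \;\leq\; \min_{\substack{D \subset F \\ |D| = r}} \left(1 - \sum_{s \in D}\alpha_s\right).
\]
Taking the supremum over $f_0$ (equivalently, over $\alpha \in \Delta_n$) and invoking lemma \ref{lem:optimrparts} to evaluate the resulting max-min yields $d(F, \mathcal{K}) \leq \frac{n-r}{n} = 1 - \frac{r}{n}$.

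The proof is essentially a bookkeeping exercise built on the two preceding lemmas; I do not foresee any real obstacle. The one subtlety is in part (2): one must extend each $r$-subset $D$ to a maximal superset $G \supseteq D$ in $\mathcal{E}$, and observe that enlarging $G$ \emph{shrinks} $\Omega \setminus f_0^{-1}(G)$, so the minimum over maximal subsets is dominated from above by the minimum over $r$-subsets. This ensures the inequality flows in the direction required for $\sup_{f_0}$ to yield an upper bound on $d(F,\mathcal{K})$.
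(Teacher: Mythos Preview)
Your approach is essentially the paper's: a uniform test map for (1), and the bound via lemma~\ref{lem:optimrparts} for (2). Part (2) is fine and matches the paper's argument line for line.

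In part (1) there is a small slip. You write ``since $H\in\mathcal{E}$ has maximal cardinality'', but the hypothesis only says $H$ is \emph{inclusion}-maximal among subsets of $F$ lying in $\mathcal{K}$, not that $|H|$ is maximal. With the uniform $f_0$ one gets
\[
d(f_0,L(\mathcal{K}))=\min_{G\in\mathcal{E}}\Bigl(1-\tfrac{|G|}{n}\Bigr)=1-\tfrac{1}{n}\max_{G\in\mathcal{E}}|G|,
\]
which equals $1-|H|/n$ only when $|H|=\max_{G\in\mathcal{E}}|G|$; for a general inclusion-maximal $H$ the inequality $d(F,\mathcal{K})\geq 1-|H|/n$ can fail (take $F=\{1,2,3\}$ with maximal faces $\{1\}$ and $\{2,3\}$: then $d(F,\mathcal{K})=\tfrac12$, yet $1-|\{1\}|/3=\tfrac23$). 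The paper's own proof glosses over the same point, asserting the bound ``for all such $H$'' directly from the uniform test. In practice this is harmless: the ``in particular'' clause and every later application only use the bound for the $H$ of maximum cardinality (or merely that some $H$ has $|H|\le n-1$ when $F\notin\mathcal{K}$), and that is exactly what the uniform test map delivers.
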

\begin{proof}
By lemma \ref{lem:calcdFK} this is equal to
$$
d(F,\mathcal{K}) = \sup_{f_0(\Omega) =F } \min_{\stackrel{H < F}{H \in \mathcal{K}}} \la \left( \Omega \setminus f_0^{-1}(H) \right)
$$
and the $H$ in the minimum can be taken to be maximal. In order to get an approximation of this quantity, we first need a lemma.

If $F \in \mathcal{K}$, then for all $f_0 : \Omega \to S$, $f_0(\Omega) = F$ implies $d(f_0,L(\mathcal{K})) = 0$,
hence $d(F,\mathcal{K}) = 0$. If not, let $n = |F|$. For $H \subset F$ and $H \in \mathcal{K}$
maximal for these properties, let $r_H = |H|$. We choose $f_{00} : \Omega \to F$ with uniform law. Then,
for all $H$ as above, we have $\la(\Omega \setminus f_{00}^{-1}(H)) = (n-r_H)/n$
hence $d(F,\mathcal{K}) \geq 1- r_H/n$ for all such $H$. This inequality is obviously valid in the case $F \in \mathcal{K}$.

Let us now consider an arbitrary $f_0 : \Omega \to S$ with $f_0(\Omega) = F$, and let $\mathcal{E}$
the set of all 
$H \subset F$ and $H \in \mathcal{K}$ which are
maximal for these properties.  Assume that all the subsets of $F$ of cardinality $r$ belong to $\mathcal{K}$.
Then
we have
$$
\forall H \in \mathcal{E} \ \forall D \subset H \ |D| = r \Rightarrow \la \left( \Omega \setminus f_0^{-1}(H) \right)
\leqslant \la\left( \Omega \setminus f_0^{-1}(D) \right).
$$
Therefore, by considering the law $(a_x)_{x \in F}$ on $F$ associated to $f_0$ (that is $a_x = \la(f_0^{-1}(\{ x \}))$) we get
$$
d(F,L_2) \leqslant \sup_{(a_x)_{x \in F} \in \R_+^F ; \sum_x a_x = 1 }  \min_{\stackrel{D \subset F}{|D|=r}} \left( 1 - \sum_{x \in D}
a_x \right) = \frac{n-r}{r}
$$
by lemma \ref{lem:optimrparts}, and this concludes the proof.
\end{proof}

\begin{corollary} \label{cor:ineqsquel} Let $\mathcal{K}_1, \mathcal{K}_2 \in \mathcal{S}$. If $\mathcal{K}_1\neq \mathcal{K}_2$ then there exists $N = N(\mathcal{K}_1,\mathcal{K}_2)$ such that $\pi_N(\mathcal{K}_1)=\pi_N( \mathcal{K}_2)$ and
$\pi_{N+1}(\mathcal{K}_1)\neq \pi_{N+1}( \mathcal{K}_2)$. We then have
$$
\frac{1}{N+2} \leqslant
d(\mathcal{K}_1, \mathcal{K}_2)
$$
\end{corollary}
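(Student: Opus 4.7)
The plan is to first establish the existence of $N$ and then convert the observed difference at level $N+1$ into a concrete face of size $N+2$ witnessing a lower bound for $d$ via proposition \ref{prop:uplowbound}.

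For the existence of $N$: since $\KK_i = \bigcup_{n\geq 0} \pi_n(\KK_i)$ and the sequences $(\pi_n(\KK_i))_n$ are increasing in $n$, the assumption $\KK_1 \neq \KK_2$ forces the existence of some smallest $n_0$ with $\pi_{n_0}(\KK_1) \neq \pi_{n_0}(\KK_2)$. Set $N = n_0 - 1$ (with the convention $\pi_{-1}(\KK) = \emptyset$, which handles the edge case where the $0$-skeletons already differ, and for which the claimed bound $\tfrac{1}{N+2}=1$ is exactly what corollary \ref{cor:dist1vertex} provides). Then by construction $\pi_N(\KK_1)=\pi_N(\KK_2)$ and $\pi_{N+1}(\KK_1)\neq\pi_{N+1}(\KK_2)$.

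For the inequality: by symmetry we may pick $F \in \pi_{N+1}(\KK_1)\setminus \pi_{N+1}(\KK_2)$. So $F \in \KK_1$, $F \notin \KK_2$, and $|F|\leq N+2$. The key observation is that $|F|$ must in fact equal $N+2$: if $|F|\leq N+1$ then $F \in \pi_N(\KK_1)=\pi_N(\KK_2)$, forcing $F \in \KK_2$, a contradiction. Now I apply part (1) of proposition \ref{prop:uplowbound} to the face $F$ of size $n=N+2$ and the simplicial complex $\KK_2$, which does not contain $F$; this yields
$$
d(F,\KK_2) \;\geq\; \frac{1}{|F|} \;=\; \frac{1}{N+2}.
$$

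Finally, picking any $f_0 \in L(\Omega,S)$ with $f_0(\Omega)=F$ gives $f_0 \in L(\KK_1)$, and by definition of $d(F,\KK_2)$ as a supremum over such $f_0$, we get $\sup_{f_0(\Omega)=F} d(f_0,L(\KK_2)) = d(F,\KK_2) \geq \frac{1}{N+2}$. Since $\delta(\KK_1,\KK_2) = \sup_{f\in L(\KK_1)} d(f,L(\KK_2))$ dominates this, we obtain $\delta(\KK_1,\KK_2) \geq \frac{1}{N+2}$, and a fortiori $d(\KK_1,\KK_2) \geq \frac{1}{N+2}$. There is no real obstacle here; the only subtle point is to notice that the coincidence of $N$-skeletons forces the witness face $F$ to have size exactly $N+2$, which is precisely what makes the lower bound from proposition \ref{prop:uplowbound}(1) sharp in the denominator.
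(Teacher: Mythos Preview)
Your proof is correct and follows essentially the same approach as the paper: isolate a face $F$ of size exactly $N+2$ lying in one complex but not the other, then invoke proposition \ref{prop:uplowbound}(1) to get $d(F,\KK_2)\geq 1/(N+2)$ and hence $\delta(\KK_1,\KK_2)\geq 1/(N+2)$. The only cosmetic difference is that the paper first uses the $1$-Lipschitz property of $\pi_{N+1}$ (proposition \ref{prop:pinlip}) to reduce to the case $\KK_1,\KK_2\in\mathcal{S}_{N+1}$ before locating the witness face, whereas you work directly in the original complexes; your route is marginally more self-contained since it does not need proposition \ref{prop:pinlip}, and your explicit treatment of the edge case $N=-1$ via corollary \ref{cor:dist1vertex} is a detail the paper leaves implicit.
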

\begin{proof}
By definition $\pi_N(\mathcal{K}_1)=\pi_N( \mathcal{K}_2)$ means that, for all $F \in \mathcal{P}_f^*(S)$
with $|F| \leq N+1$, then $F \in \mathcal{K}_1 \Leftrightarrow F \in \mathcal{K}_2$. 
Now, we know that $d(\pi_{N+1}(\mathcal{K}_1) ,\pi_{N+1}(\mathcal{K}_2)) \leq d(\mathcal{K}_1,\mathcal{K}_2)$
by proposition \ref{prop:pinlip}. In order to prove the lower bound we can thus assume $\mathcal{K}_1,\mathcal{K}_2 \in \mathcal{S}_{N+1}$. Since
$\pi_N(\mathcal{K}_1) = \pi_N(\mathcal{K}_2)$ we have
$$
\delta(\mathcal{K}_1,\mathcal{K}_2) = \sup_{F  \in \mathcal{K}_1} d(F,\mathcal{K}_2)
= \sup_{\stackrel{F \in \mathcal{K}_1}{|F| = N+2}} d(F,\mathcal{K}_2).
$$
Therefore, by the proposition, if there exists $F \in \mathcal{K}_1 \setminus \mathcal{K}_2$ we have $\delta(\mathcal{K}_1,\mathcal{K}_2) \geq 1/(N+2)$,
and otherwise $\delta(\mathcal{K}_1,\mathcal{K}_2)= 0$. It follows that $\mathcal{K}_1 \neq \mathcal{K}_2$
implies $d(\mathcal{K}_1,\mathcal{K}_2) \geq 1/(N+2)$ and this proves the claim.
\end{proof}

\subsubsection{Computation and rationality for finite complexes}

We explain how this metric between two given finite complexes $\KK_1$ and $\KK_2$ can be explicitely computed. As a byproduct, the algorithm we provide will prove the following.

\begin{proposition} \label{prop:dKKQ}
If $\KK_1$ and $\KK_2$ are finite complexes, then $d(\KK_1,\KK_2) \in \Q$.
\end{proposition}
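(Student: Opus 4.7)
The plan is to reduce the computation of $d(\KK_1,\KK_2)$ to a finite family of linear programs with $\{0,1\}$ coefficients, whose optimal values are automatically rational.

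By Corollary \ref{cor:dLKdMK} we work in $M(S)$ with the $\frac{1}{2}L^1$ metric, where each $\overline{|\KK_i|_1}=\bigcup_{F\in\KK_i}\Delta_F$ is a finite union of closed probability simplices $\Delta_F=\{\alpha:\supp\alpha\subset F,\ \sum_s\alpha(s)=1\}$, hence a compact rational polyhedron. For a fixed face $F_1\in\KK_1$, the preceding proposition (combined with the identity $\alpha(F_2)=\alpha(F_2\cap F_1)$ for $\alpha\in\Delta_{F_1}$ and the fact that $F_2\cap F_1\in\KK_2$ whenever non-empty, by the simplicial axiom) yields
$$d(\alpha,\overline{|\KK_2|_1}) = 1 - \max_{F_2\in\mathcal{M}(F_1)}\sum_{s\in F_2}\alpha(s),\qquad\alpha\in\Delta_{F_1},$$
where $\mathcal{M}(F_1)$ denotes the finite set of maximal elements of $\{F_2\in\KK_2:F_2\subset F_1\}$, and the right-hand side is taken to be $1$ when $\mathcal{M}(F_1)=\emptyset$.

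Consequently, when $\mathcal{M}(F_1)\neq\emptyset$, the quantity $\sup_{\alpha\in\Delta_{F_1}}d(\alpha,\overline{|\KK_2|_1})$ equals $1$ minus the optimal value of the linear program: minimise $t$ subject to $\sum_{s\in F_2}\alpha(s)\leq t$ for all $F_2\in\mathcal{M}(F_1)$, $\sum_{s\in F_1}\alpha(s)=1$, and $\alpha(s)\geq 0$. All coefficients lie in $\{0,1\}$, so the LP has a rational optimal value (attained at a rational vertex of the feasible polytope). Since $\KK_1$ is finite, $\delta(\KK_1,\KK_2)=\max_{F_1\in\KK_1}\sup_{\alpha\in\Delta_{F_1}}d(\alpha,\overline{|\KK_2|_1})$ is the maximum of finitely many rationals, hence rational; swapping the roles of $\KK_1$ and $\KK_2$ gives $\delta(\KK_2,\KK_1)\in\Q$, and therefore $d(\KK_1,\KK_2)=\max(\delta(\KK_1,\KK_2),\delta(\KK_2,\KK_1))\in\Q$.

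The main non-routine point is checking that the displayed formula for $d(\alpha,\overline{|\KK_2|_1})$ is valid on all of $\Delta_{F_1}$, including on the boundary where $\supp\alpha\subsetneq F_1$. This follows either from continuity of the distance function, or directly: if $\supp\alpha=F'\subsetneq F_1$, every $F_2\in\mathcal{M}(F')$ extends to some $\widetilde F_2\in\mathcal{M}(F_1)$ with $\alpha(\widetilde F_2)=\alpha(F_2)$, and conversely $\alpha(F_2'')=\alpha(F_2''\cap F')$ with $F_2''\cap F'\in\KK_2$ for every $F_2''\in\mathcal{M}(F_1)$, so both versions of the maximum coincide.
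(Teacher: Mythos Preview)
Your proof is correct and follows essentially the same approach as the paper: both reduce the computation of $d(F,\KK_2)=\sup_{\alpha\in\Delta_F}\min_{G}(1-\alpha(G))$ to an optimisation over a rational polytope with integer data, and conclude rationality from the rationality of the vertices of such a polytope. Your epigraph reformulation as a single linear program in the variables $(\alpha,t)$ is a somewhat cleaner packaging than the paper's argument, which instead decomposes $\Delta_F$ into the argmax regions $C(G)=\{\underline{x}:\varphi_G(\underline{x})\geq\varphi_H(\underline{x})\ \forall H\}$ and then applies Krein--Milman to each $C(G)$ separately; but the underlying mechanism is identical.
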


In order to compute $d(\KK_1,\KK_2)$ we need to explain how to compute $\delta(\KK_1,\KK_2)$. Since $\KK_1$ has a finite number of simplices, this amounts to computing a
finite number of terms of the form $d(F,\KK_2)$. Therefore we only need to compute and to prove the rationality of
$$
\sup_{\stackrel{(x_s)_{s \in F} \in [0,1]^F}{\sum x_s = 1}} \inf_{\stackrel{G \in \KK}{G \subset F}} \left( 1 - \sum_{s \in G} x_s\right) = 1 - D(F,\KK) \mbox{ with } D(F,\KK) = \inf_{\stackrel{(x_s)_{s \in F} \in [0,1]^F}{\sum x_s = 1}} \sup_{\stackrel{G \in \KK}{G \subset F}} \sum_{s \in G} x_s
$$
for $F$ a finite set and $\KK$ a finite simplicial complex or, equivalently, of $D(F,\KK)$. Without loss of generality we can assume $\bigcup \KK \subset F$, so that
$G \in \KK \Rightarrow G \subset F$. For such a $G \subset F$ we denote $\varphi_G$ the linear form on $\R^F$ given by $\underline{x} = (x_s)_s \mapsto \sum_{s \in G} x_s$. We denote $e_s^* = \varphi(\{ s\})$ for $s \in F$.

Since $F$ has only a finite number of subsets, the map
$\underline{x} \mapsto \sum_{G \in \KK} \varphi_G(\underline{x})$ is continuous on $\R^F$, hence its supremum on the compact set $\Delta_F = \{ \underline{x} \in [0,1]^F \ | \ \sum_{s \in F} x_s = 1 \}$ is equal to
$\varphi_{G_1}(\underline{x}_1)$ for some $G_1 \in \KK$ and $\underline{x}_1 \in \Delta_F$. By definition of the supremum, we have $\varphi_{G_1}(\underline{x}_1) \geq
\varphi_{H}(\underline{x}_1)$ for all $H \in \KK$.

For such a $G \in \KK$, let us denote $\mathcal{L}(G)$
the set of all linear forms
$\varphi_G - \varphi_H \in (\R^F)^*$ with $H \in \KK\setminus \{ G \}$. We denote the union of $\mathcal{L}(G)$ with $\{ e_s^*, s \in F \}$ and the set of affine forms $\{ \underline{x} \mapsto 1 - x_s ; s \in F \}$.
Then one checks that 
$$
\varphi_{G_1}(\underline{x}_1) = \inf\lbrace
\varphi_{G_1}(\underline{x}) ; \sum_s x_s = 1\  \& \ \forall\varphi \in \mathcal{L}^+(G_1)\ \   \varphi(\underline{x}) \geq 0 \}
$$
therefore we need to compute the infimum of $\varphi_{G_1}$ on the compact convex set
$C = \{ \underline{x}) ; \sum_s x_s = 1\  \& \ \forall\varphi \in \mathcal{L}^+(G_1)\ \   \varphi(\underline{x}) \geq 0 \}$. By the Krein-Millman (or Minkowski) theorem, $C = C(G_1)$ is equal to the convex hull
inside $\R^F$ of the collection $\mathcal{Y}(G_1)$ of its extremal points. Since $\varphi_{G_1}$ is linear this implies that
$$
\varphi_{G_1}(\underline{x}_1) = \max_{G \in \KK} \sup \{ \varphi_G(\underline{x}) ; \underline{x} \in C(G) \}
=\max_{G \in \KK} \sup \{ \varphi_G(\underline{x}) ; \underline{x} \in \mathcal{Y}(G) \}
$$
Now, each $C(G)$ is the intersection of a finite number of half-spaces whose equations have the form $psi(\underline{x}) \geq 0$ with $\psi$ an affine form with rational coefficients. 
Therefore its set of extremal points is the union of
the \emph{points} $z$ such that $\{z \}$ is the intersection of a subset of the collection of affine hyperplanes of the form $\psi = 0$ with $\psi$ as above.
Therefore it is sufficient to determine the collections of
affine forms inside $L(G)$ with the property that
the intersection of their hyperplanes is a single point
to determine $\mathcal{Y}(G)$. There is a finite number of them. Moreover, since these equations have rational coefficients, the solution of
the corresponding linear system has rational coefficients. It follows that $\mathcal{Y}(G) \subset \Q^F$ for all $G$. In particular $\varphi_{G_1}(\underline{x}_1) = \varphi_{G_1}(\underline{x}_2)$ for some
$\underline{x}_2 \in \mathcal{Y}(G_1) \subset \Q^F$
hence $D(F,\KK) = \varphi_{G_1}(\underline{x}_1) \in \varphi_{G_1}(\Q^F) = \Q$. Moreover, the description above readily provides an algorithm for computing it : determine $\mathcal{Y}(G)$ for all $G \in \KK$ by solving the linear systems attached to $C(G)$, compute $\varphi_G$ on them, take the maximum on $\mathcal{Y}(G)$, and then take the minimum over all $G \in \KK$.

\subsubsection{Reduction to connected components}

\begin{lemma}
Let $\KK$ be a simplicial complex and $F \in \PF^*(S)$.
If $F \not\subset \bigcup \KK$ then $D(F,\KK) = 0$, that is $d(F,\KK)=1$.
\end{lemma}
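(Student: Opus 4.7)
The plan is to exploit the explicit formula $d(F,\KK) = 1 - D(F,\KK)$ established in the previous subsection, so it suffices to show $D(F,\KK) = 0$ under the hypothesis $F \not\subset \bigcup \KK$. The strategy is to exhibit a specific probability vector $\underline{x} \in \Delta_F$ that makes the inner $\sup$ equal to $0$.

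By hypothesis, there exists $s_0 \in F$ with $s_0 \notin \bigcup \KK$, i.e., $s_0$ is not a vertex of any simplex of $\KK$. I would take $\underline{x} = (x_s)_{s \in F}$ to be the Dirac mass at $s_0$, namely $x_{s_0} = 1$ and $x_s = 0$ for $s \neq s_0$, which clearly lies in $\Delta_F$. For every $G \in \KK$ with $G \subset F$, one has $s_0 \notin G$ (otherwise $s_0 \in \bigcup \KK$), so $\sum_{s \in G} x_s = 0$. This gives $\sup_{G \in \KK,\, G \subset F} \sum_{s \in G} x_s = 0$ (with the convention that the supremum over the empty family is $0$, which also covers the case where no $G \in \KK$ is contained in $F$).

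Consequently $D(F,\KK) \leq 0$, and since $D(F,\KK)$ is clearly nonnegative (it is an infimum of sums of nonnegative terms), we conclude $D(F,\KK) = 0$, hence $d(F,\KK) = 1 - D(F,\KK) = 1$.

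There is essentially no obstacle here: the whole content of the lemma is the observation that a vertex of $F$ lying outside $\bigcup \KK$ provides a direction in $\Delta_F$ annihilating every admissible linear form $\varphi_G$. The only mildly delicate point is the convention for the empty supremum, but in that degenerate case one may alternatively invoke lemma \ref{lem:calcdFK} directly (the set $\mathcal{E}$ there is empty, so $d(f_0,L(\KK)) = 1$ for every $f_0$ with $f_0(\Omega) = F$, yielding $d(F,\KK) = 1$).
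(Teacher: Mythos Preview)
Your proof is correct and follows essentially the same approach as the paper: both pick a point $t \in F \setminus \bigcup \KK$, take the Dirac mass $\underline{x}^0$ at $t$, and observe that every admissible $\varphi_G$ vanishes on it, squeezing $D(F,\KK)$ between $0$ and $0$. Your additional remarks on the empty-supremum convention and the alternative via lemma~\ref{lem:calcdFK} are fine but not needed.
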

\begin{proof}
Letting $t \in F \setminus \bigcup \KK$, we define
$\underline{x}^0\in \Delta(F)$ by $x^0_t = 1$ and $x^0_s = 0$ for $s \neq t$. Then
$$
0 \leqslant D(F,\KK) = \inf_{\underline{x} \in \Delta(F)} \sup_{\stackrel{G \in \KK}{G \subset F}}\varphi_G(\underline{x}) \leqslant
\sup_{\stackrel{G \in \KK}{G \subset F}}\varphi_G(\underline{x}^0) = 0
$$
and this proves the claim.
\end{proof}

\begin{lemma} 
\label{lem:opticonnexe}
Let $\Delta = \{ (a_1,\dots,a_r)\in [0,1]^r \ | \ a_1+\dots+a_r = 1\}$ and $u_1,\dots,u_r > 0$. Then
$$
\inf_{\underline{a} \in \Delta} \max(a_1u_1,\dots,a_r u_r)
= \frac{1}{\frac{1}{u_1}+\frac{1}{u_2}+\dots+\frac{1}{u_r}}
$$
\end{lemma}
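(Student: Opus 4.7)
The plan is to prove matching upper and lower bounds by the standard equalization argument. Let me write $U = \sum_{i=1}^r 1/u_i$, so that the claimed infimum is $1/U$.

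For the lower bound, I would observe that if $\underline{a} \in \Delta$ and $M = \max(a_1 u_1, \ldots, a_r u_r)$, then $a_i u_i \leq M$ for each $i$, hence $a_i \leq M/u_i$. Summing over $i$ and using $\sum a_i = 1$ gives
$$1 = \sum_{i=1}^r a_i \leq M \sum_{i=1}^r \frac{1}{u_i} = MU,$$
so $M \geq 1/U$. Taking the infimum over $\underline{a} \in \Delta$ yields $\inf_{\underline{a}\in\Delta} \max_i a_i u_i \geq 1/U$.

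For the upper bound, I would exhibit an explicit $\underline{a}^\star \in \Delta$ achieving the value $1/U$, namely $a_i^\star = (1/u_i)/U$. One checks that $a_i^\star \in [0,1]$ (since each $u_i > 0$) and $\sum_i a_i^\star = 1$, so indeed $\underline{a}^\star \in \Delta$. Moreover $a_i^\star u_i = 1/U$ for every $i$, whence $\max_i a_i^\star u_i = 1/U$.

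Combining both bounds gives the claimed equality, and incidentally shows that the infimum is attained (it is in fact a minimum). There is no real obstacle here: the lemma is a textbook min-max equalization fact, and the only thing to keep in mind is that $u_i > 0$ is essential both for $1/u_i$ to make sense and for $a_i^\star$ to be a valid probability weight. The reason the authors state it as a lemma is presumably that it will be applied to express the distance $d(F,\KK)$ restricted to a connected component of $\KK$ in closed form via the harmonic mean of certain weights.
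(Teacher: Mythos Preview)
Your proof is correct and matches the paper's argument essentially line for line: the same explicit minimizer $a_i^\star = u_i^{-1}/U$ gives the upper bound, and the same summation of $a_i \leq M/u_i$ gives the lower bound (the paper phrases this half by contradiction, but the content is identical).
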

\begin{proof}
Let $R$ denote the RHS of the equation and $L$ its LHS. Let us consider $\underline{a} \in \R_+^r$ with $a_i = u_i^{-1}R$.
Note that $a_i = \frac{1}{\sum_{j} \frac{u_i}{u_j}} \leq \frac{1}{\frac{u_i}{u_i}} =1$ and $a_1+\dots+a_r = 1$ hence $\underline{a}\in\Delta$. We have $\max(a_1u_1,\dots,a_r u_r) = R$, hence $L \leq R$.

Now let us consider an arbitrary $\underline{a} \in \Delta_r$. If, for any $i \in \{1,\dots,r\}$ we have
$a_i \geq R u_i^{-1}$, then $\max(a_1u_1,\dots,a_r u_r) \geq R$. We claim that it is always the case. For otherwise, we would have $a_i < R u_i^{-1}$ for all $i$,
whence $\sum_i a_i < R \sum_i u_i^{-1}=1$, contradicting $\underline{a}\in \Delta$. This proves $L \geq R$ hence $L = R$. 
\end{proof}

\begin{proposition}
Let $\KK$ be a simplicial complex, $F \in \PF^*(S)$
and $(\KK_i)_{i \in I}$ the connected components of $\KK$.
Then
$$
\frac{1}{D(F,\KK)} = \sum_{i \in I}\frac{1}{D(F_i,\KK_i)}
$$
\end{proposition}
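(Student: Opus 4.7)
The plan is to reduce the computation of $D(F,\KK)$ to independent computations on each component and then combine them via Lemma~\ref{lem:opticonnexe}. Let $V_i = \bigcup \KK_i$ be the vertex set of the $i$-th component, and $F_i := F \cap V_i$. First I would dispose of the case $F \not\subset \bigcup \KK$ by appealing to the preceding lemma: then $D(F,\KK) = 0$, so both sides of the asserted identity are $+\infty$ (with the convention $1/0 = +\infty$, and noting that the same lemma applied inside some $\KK_i$ that misses a vertex of $F_i$ forces the corresponding term on the right to be $+\infty$ as well). So I may assume $F \subset \bigcup \KK$, which yields a disjoint decomposition $F = \bigsqcup_{i \in I'} F_i$, where $I' = \{ i \in I \mid F_i \neq \emptyset \}$; components with $F_i = \emptyset$ contribute $1/D(F_i,\KK_i) = 0$ and can be dropped.

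The crucial combinatorial observation is that every simplex $G \in \KK$ lies in a single component: $G$ is itself a connected subcomplex of $\KK$, hence $G \subset V_i$ for a unique $i$, and then $G \in \KK_i$. Next I would parametrise an arbitrary $\underline{x} \in \Delta(F)$ by its mass on each component $x_i := \sum_{s \in F_i} x_s$ (so $(x_i)_{i \in I'}$ belongs to the simplex $\Delta_{I'}$) and, when $x_i > 0$, by the conditional $\underline{y}^{(i)} \in \Delta(F_i)$ with $y^{(i)}_s = x_s/x_i$. For any $G \in \KK_i$ with $G \subset F_i$, one has $\varphi_G(\underline{x}) = x_i\,\varphi_G(\underline{y}^{(i)})$, and therefore
$$
\sup_{\stackrel{G \in \KK}{G \subset F}} \varphi_G(\underline{x}) \;=\; \max_{i \in I'}\, x_i \sup_{\stackrel{G \in \KK_i}{G \subset F_i}} \varphi_G(\underline{y}^{(i)}),
$$
with the convention that the $i$-th term equals $0$ when $x_i = 0$ (consistent with the limit $x_i \to 0^+$).

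Since the conditionals $\underline{y}^{(i)}$ for distinct $i$ decouple completely once $(x_i)$ is fixed, I can minimise them independently, getting $\inf_{\underline{y}^{(i)}} \sup_G \varphi_G(\underline{y}^{(i)}) = D(F_i,\KK_i) =: u_i$. This gives
$$
D(F,\KK) \;=\; \inf_{(x_i) \in \Delta_{I'}} \max_{i \in I'} \, x_i u_i.
$$
Each $u_i$ is strictly positive: for every $s \in F_i \subset V_i$ the singleton $\{s\}$ lies in $\KK_i$ (since $s$ belongs to some simplex of $\KK_i$), so evaluating at $G=\{s\}$ gives $u_i \geq 1/|F_i|$. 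Applying Lemma~\ref{lem:opticonnexe} to the $u_i$'s then yields $D(F,\KK) = 1 \big/ \sum_{i \in I'} 1/u_i$, which is the desired identity. The only real obstacle is the bookkeeping around the factorisation $\underline{x} \leftrightarrow ((x_i),\underline{y}^{(i)})$ and the handling of vanishing marginals; once this is set up carefully, the decoupling of the inner optimisations is immediate and Lemma~\ref{lem:opticonnexe} closes the argument.
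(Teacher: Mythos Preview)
Your argument is essentially the paper's own proof: reduce to finitely many nonempty $F_i$, factor $\underline{x}$ into marginals $(x_i)$ and conditionals on each $F_i$, use that every simplex of $\KK$ lies in a single component to split the inner $\sup$, decouple the conditionals, and finish with Lemma~\ref{lem:opticonnexe}. The only cosmetic difference is that the paper works over the open simplex $\Delta^{\circ}$ and appeals to density to sidestep the $x_i=0$ issue, whereas you treat that boundary case by convention; both are fine.

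One small slip: in the degenerate case $F \not\subset \bigcup \KK$, your justification that ``some $\KK_i$ misses a vertex of $F_i$'' is off, since by definition $F_i = F \cap V_i \subset V_i = \bigcup\KK_i$, so no $F_i$ ever contains a vertex outside its component. (The paper is equally casual here; the statement is really only meaningful once $F \subset \bigcup\KK$, and both proofs should simply say so.) This does not affect the substantive part of your argument.
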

\begin{proof}
First note that, if $F \not\subset \bigcup\KK$,
then $D(F,\KK) =D(F,\KK_i) = 0$ and the claim holds.
Therefore we can assume $F \subset \bigcup\KK$.
Let then $F_i = F \cap \bigcup \KK_i$. The $F_i$ form
a partition of $F$. Let $I_0 = \{ i \in I; F_i \neq \emptyset\}$ and $\KK' = \bigcup_{i \in I_0} \KK_i$.
From the definition one gets immediately $D(F,\KK)=D(F,\KK')$, so we can assume $I_0 = I$, that is $\forall i \in I \ F_i \neq \emptyset$. In particular $I$ can be assumed to be finite.

For $G$ a finite set, we denote $\Delta^{\circ}(G) \subset \Delta(G)$ the set of $\alpha : G \to ]0,1]$ such
that $\sum_{ s \in G} \alpha(s) = 1$. It is a dense subset of $\Delta(G)$, and in particular
$$
D(F,\KK) =\inf_{\underline{x}\in\Delta(F)} \sum_{\stackrel{G \in \KK}{G\subset F}} \varphi_G(\underline{x}) = 
\inf_{\underline{x}\in\Delta^{\circ}(F)} \sum_{\stackrel{G \in \KK}{G\subset F}} \varphi_G(\underline{x})
$$

For $\underline{x} = (x_s)_{s \in F} \in \Delta(F)$, we define
$\alpha(\underline{x}) \in \Delta(I)$ by
$\alpha(\underline{x})_i = \sum_{s \in F_i} x_s$. 
Note that $\underline{x} \in \Delta^{\circ}(F) \Rightarrow \alpha(\underline{x}) \in \Delta^{\circ}(I)$.
Then
$$
D(F,\KK) = \inf_{\underline{x}\in\Delta^{\circ}(F)} \sum_{\stackrel{G \in \KK}{G\subset F}} \varphi_G(\underline{x})
= \inf_{\underline{\alpha}\in\Delta^{\circ}(I)}
\inf_{
\stackrel{
\underline{x}\in\Delta^{\circ}(F)}{
\alpha(\underline{x})=\underline{\alpha}}} \max_{\stackrel{G \in \KK}{G\subset F}} \varphi_G(\underline{x})
= \inf_{\underline{\alpha}\in\Delta^{\circ}(I)}
\inf_{
\stackrel{
\underline{x}\in\Delta^{\circ}(F)}{
\alpha(\underline{x})=\underline{\alpha}}} \max_{i \in I}\max_{\stackrel{G \in \KK_i}{G\subset F_i}} \varphi_G(\underline{x})
$$
Let us denote $I = \{1,\dots,r\}$. Now, for fixed $\underline{\alpha}\in \Delta^{\circ}(I)$ 
we have a bijection
$$\Delta^{\circ}(F_1)\times \dots \times \Delta^{\circ}(F_r) \to \{\underline{x}\in \Delta^{\circ}(F) \ \mid \ \alpha(\underline{x}) =\underline{\alpha}\}$$
given by $(\underline{x}^{(1)},\dots,\underline{x}^{(r)}) \mapsto \underline{x} = (x_s)_{s \in F}$
such that $x_s = x^{(i)}_s \alpha_i$. Moreover, under
this bijection, if $G\subset \KK_i$, we have $\varphi_G(\underline{x}) = \alpha_i \varphi_G(\underline{x}^{(i)})$. It follows that
$$
D(F,\KK) = 
\inf_{\underline{\alpha}\in\Delta^{\circ}(I)}
\inf_{
(\underline{x}^{(1)},\dots,\underline{x}^{(r)}) \in \Delta^{\circ}(F_1)\times \dots\times \Delta^{\circ}(F_r)}
 \max_{i \in I}\max_{\stackrel{G \in \KK_i}{G\subset F_i}} \alpha_i \varphi_G(\underline{x}^{(i)})
=
\inf_{\underline{\alpha}\in\Delta^{\circ}(I)}
\max_{i \in I} \inf_{
\underline{x}^{(i)} \in \Delta^{\circ}(F_i) }
\max_{\stackrel{G \in \KK_i}{G\subset F_i}}\alpha_i \varphi_G(\underline{x}^{(i)})
$$
hence
$$D(F,\KK) = 
\inf_{\underline{\alpha}\in\Delta(I)}
\max_{i \in I} \alpha_i \inf_{
\underline{x}^{(i)} \in \Delta(F_i) }
\max_{\stackrel{G \in \KK_i}{G\subset F_i}} \varphi_G(\underline{x}^{(i)})
 = 
\inf_{\underline{\alpha}\in\Delta(I)}
\max_{i \in I} \alpha_i 
D(F_i,\KK_i)
$$
and by lemma \ref{lem:opticonnexe} this proves the claim.

\end{proof}

\subsection{Special cases}
\label{sect:specialcases}
We now compute a few basic examples.

\begin{enumerate}
\item Assume $S = \N^* = \Z_{>0}$,
and let $\mathcal{K}_n = \mathcal{P}^*(\{1,\dots,n \})$, $\mathcal{K}'_n = \{ \{ i \}; i \in \{1,\dots, n \} \}$.
We have $\mathcal{K}_n,\mathcal{K}'_n \in \mathcal{S}$, $\delta(\mathcal{K}'_n,\mathcal{K}_n) = 0$
and $\delta(\mathcal{K}_n,\mathcal{K}'_n) = (n-1)/n$.
\item Also let us consider $\Delta(S) = \mathcal{P}_f^*(S) \subset  \mathcal{P}_f^*(S)$ considered as a simplicial complex,
and assume $S = \N^*$. Let $\mathcal{K}_n = \Delta(\N^*) \setminus \{ F \in \mathcal{P}_f^*(S) \ | \ F \supset \{1,\dots, n \} \}$.
By proposition \ref{prop:uplowbound} we have $d(\Delta(\N^*),\mathcal{K}_n) \geq 1/n$.
On the other hand, for $F \in \Delta(\N^*)$ we have by lemma \ref{lem:calcdFK} that
$$d(F,\mathcal{K}_n) = \sup_{f_0 (\Omega) = F} \min_{\stackrel{H < F}{H \in \mathcal{K}_n}} \la(\Omega \setminus f_0^{-1}(H))
$$
and we restrict ourselves to considering the maximal $H$. If $F = f_0(\Omega) \in \mathcal{K}_n$ this minimum is $0$,
and otherwise $\{1,\dots,n \} \subset F \subset f_0(\Omega)$. In this case, for any $H < F$ with $H \in \mathcal{K}_n$ there exists $h_0 \in \{1,\dots, n \} \setminus H$
and $H$ is contained inside $F \setminus \{ h_0\}$ which belongs to $\mathcal{K}_n$ and has cardinality $|F|-1$. Therefore,
when $F \not\in \mathcal{K}_n$ we have
$$
d(F,\mathcal{K}_n) = \sup_{f_0 (\Omega) = F} \min_{i \in \{1,\dots n \}} \la(\Omega \setminus f_0^{-1}(F \setminus \{ i \})) \leqslant \frac{1}{n}
$$
since $\Omega \setminus f_0^{-1}(F \setminus \{ i \})
 = f_0^{-1}(\{ i \})$ and $1 = \sum_{i=1}^n \la(f_0^{-1}(\{ i \}))$. It follows that 
$d(\Delta(\N^*), \mathcal{K}_n) \leq 1/n$ and $d(\Delta(\N^*), \mathcal{K}_n) \to 0$.
\item For $X$ a set, let us denote $\mathcal{P}^*_{\leq k}(X) = \{ E \in \mathcal{P}_f^*(X) \ | \ |E| \leq k \}$. It is a simplicial complex
over $X$. We compute the distance between the simplicial complexes $\mathcal{P}^*_f(X)$ and $\mathcal{P}^*_{\leq k}(X)$.
Clearly $\delta(\mathcal{P}^*_{\leq k}(X),\mathcal{P}^*_f(X)) = 0$. We have
$$
\delta(\mathcal{P}^*_f(X),\mathcal{P}^*_{\leq k}(X)) = \sup_{F \in \mathcal{P}_f^*(X) } d(F,\mathcal{P}^*_{\leq k}(X))
$$
and, for any finite $F \subset X$, $d(F,\mathcal{P}^*_{\leq k}(X))$ is either $0$ (when $|F| \leq k$) or the sup of
the $\la(\Omega \setminus f^{-1}(H))$ for $H \subset F$ with cardinality $k$. By proposition \ref{prop:uplowbound} we
get $d(F,\mathcal{P}^*_{\leq k}(X)) = 1 - k/|F|$. The supremum over all such $F$ is then either $1$ if $X$ is infinite, or
$1 - k/|X|$. With the convention $1/\infty = 0$ this yields
$$
d(\mathcal{P}^*_f(X),\mathcal{P}^*_{\leq k}(X))  = 1 - \frac{k}{|X|}
$$
Note that the first example is a special case of this one.
\item Let us assume that $S$ is an infinite set. Let $u_n = p_n/q_n, n \geq 1$ be a sequence of positive rational numbers, with $p_n,q_n \in \N^*$ and $p_n \leq q_n$. Let us consider an infinite (countable) subset of $S$ partitioned as
$S_1\sqcup S_2 \sqcup \dots$ with $|S_n| = q_n$. We consider the following two simplicial complexes over $S$.
$$
\mathcal{K}_2 = \bigcup_n \mathcal{P}_f^*(S_n) \ \ \mathcal{K}_1 = \bigcup_n \mathcal{P}^*_{\leq q_n-p_n}(S_n)
$$
Since $\mathcal{K}_1 \subset \mathcal{K}_2$ we have $\delta(\mathcal{K}_1,\mathcal{K}_2) = 0$. Now, since any $F \in \mathcal{K}_2$
belongs to some $\mathcal{P}_f^*(S_n)$, we have $d(F,\mathcal{K}_1) = d(F,\mathcal{P}^*_{\leq q_n-p_n}(S_n))$
hence
$$
d(\mathcal{K}_1,\mathcal{K}_2) = \sup_n d\left(\mathcal{P}_f^*(S_n),\mathcal{P}^*_{\leq q_n-p_n}(S_n)\right) = \sup_n 1 - \frac{q_n - p_n}{q_n} = \sup_n \frac{p_n}{q_n}
$$
\end{enumerate}

\medskip

\section{Topology of $\mathcal{S}$}

\begin{proposition} \label{prop:SndiscrSinfnotdense} For all $n$, $\mathcal{S}_n$ is discrete. When $S$ is infinite, $\mathcal{S}_{\infty} = \bigcup_n \mathcal{S}_n$ is \emph{not} dense inside $\mathcal{S}$.
More specifically, $\mathcal{P}_f^*(S)$ considered as a simplicial complex does not belong to the
closure of $\mathcal{S}_{\infty}$, and its distance to it is equal to $1$. Finally, when $S$ is infinite (and countable ?) then $\mathcal{S}_{\infty})$ is equal to the set of isolated points of $\mathcal{S}$.
\end{proposition}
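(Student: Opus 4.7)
\textbf{Step 1 (discreteness of $\mathcal{S}_n$).} The plan is to apply proposition \ref{prop:finSn}: the image of $d|_{\mathcal{S}_n \times \mathcal{S}_n}$ is a finite subset of $[0,1]$, so its positive values admit a positive minimum $\epsilon_0$. The open ball of radius $\epsilon_0$ around any $\mathcal{K} \in \mathcal{S}_n$ therefore reduces to $\{\mathcal{K}\}$ inside $\mathcal{S}_n$.

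\textbf{Step 2 (distance to $\mathcal{P}_f^*(S)$).} For any $\mathcal{K} \in \mathcal{S}_n$ and any finite $F \subseteq S$ with $|F| = k > n+1$, the maximal subsets of $F$ in $\mathcal{K}$ have cardinality at most $n+1$, so proposition \ref{prop:uplowbound}(1) gives $d(F,\mathcal{K}) \geq 1 - (n+1)/k$. Letting $k \to \infty$, which is possible because $S$ is infinite, yields $\delta(\mathcal{P}_f^*(S),\mathcal{K}) = 1$, hence $d(\mathcal{P}_f^*(S),\mathcal{K}) = 1$ for every $\mathcal{K} \in \mathcal{S}_\infty$; in particular $\mathcal{S}_\infty$ is not dense in $\mathcal{S}$.

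\textbf{Step 3 (elements of $\mathcal{S}_\infty$ are isolated).} Fix $\mathcal{K} \in \mathcal{S}_n$ and an arbitrary $\mathcal{K}' \in \mathcal{S}$ with $\mathcal{K}' \neq \mathcal{K}$; I would split according to whether the $n$-skeletons agree. If $\pi_n(\mathcal{K}') \neq \mathcal{K}$, the $1$-Lipschitz property of $\pi_n$ (proposition \ref{prop:pinlip}) combined with Step 1 gives $d(\mathcal{K},\mathcal{K}') \geq d(\pi_n(\mathcal{K}),\pi_n(\mathcal{K}')) \geq \epsilon_0$. If $\pi_n(\mathcal{K}') = \mathcal{K}$, then $\mathcal{K}' \neq \mathcal{K}$ forces the existence of some $F \in \mathcal{K}' \setminus \mathcal{K}$ with $|F| \geq n+2$; since every maximal subset of $F$ inside $\mathcal{K}$ has cardinality $\leq n+1$, proposition \ref{prop:uplowbound}(1) yields $d(\mathcal{K},\mathcal{K}') \geq d(F,\mathcal{K}) \geq 1 - (n+1)/|F| \geq 1/(n+2)$. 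Combining, $d(\mathcal{K},\mathcal{K}') \geq \min(\epsilon_0, 1/(n+2)) > 0$.

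\textbf{Step 4 (isolated points lie in $\mathcal{S}_\infty$).} This is the step I expect to be the main obstacle: one must produce a nontrivial approximating sequence for an arbitrary $\mathcal{K} \in \mathcal{S} \setminus \mathcal{S}_\infty$. The plan is to choose simplices $F_m \in \mathcal{K}$ with $|F_m| \to \infty$ (possible since $\mathcal{K}$ has simplices of unbounded cardinality) and set
$$
\mathcal{K}_m = \{G \in \mathcal{K} \ | \ F_m \not\subseteq G \}.
$$
This is a simplicial complex, as the property $F_m \not\subseteq G$ descends to subsets, and $\mathcal{K}_m \neq \mathcal{K}$ because $F_m \in \mathcal{K} \setminus \mathcal{K}_m$. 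Since $\mathcal{K}_m \subseteq \mathcal{K}$ only $\delta(\mathcal{K},\mathcal{K}_m)$ needs estimation: for $G \in \mathcal{K}$ with $G \not\supseteq F_m$ we have $G \in \mathcal{K}_m$ and $d(G,\mathcal{K}_m)=0$, while for $G \supseteq F_m$ a routine verification identifies the maximal subsets of $G$ belonging to $\mathcal{K}_m$ as exactly the $G \setminus \{g\}$ with $g \in F_m$. Lemma \ref{lem:calcdFK} then expresses $d(G,\mathcal{K}_m)$ as $\sup \inf_{g \in F_m} a_g$ over probability laws $(a_x)_{x \in G}$, and a simple averaging argument evaluates this supremum to $1/|F_m|$, attained in the limit by the uniform law on $F_m$. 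Hence $d(\mathcal{K},\mathcal{K}_m) = 1/|F_m| \to 0$ and $\mathcal{K}$ is not isolated; note that this construction uses no countability assumption on $S$, which should settle the parenthetical question mark in the statement.
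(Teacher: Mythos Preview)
Your proof is correct. A few comparative remarks:

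In Step~1 you invoke proposition~\ref{prop:finSn} (finite image of $d$) to extract a positive minimum $\epsilon_0$; the paper instead gives the explicit uniform bound $d(\mathcal{K}_1,\mathcal{K}_2)\geq 1/(n+1)$ on $\mathcal{S}_n$, obtained directly from corollary~\ref{cor:ineqsquel}. Your route is perfectly valid but loses the explicit constant; this is harmless here, though in Step~3 you then carry an abstract $\epsilon_0$ where the paper simply has $1/(n+1)$.

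In Step~4 the underlying construction is the same as the paper's: remove from $\mathcal{K}$ all simplices containing a fixed large face $F_m$. The paper packages this as $\mathcal{K}\cap\Delta_m$ and appeals to the contracting property of intersection (proposition~\ref{prop:intersectcontract}) together with the precomputed example~(2) of section~\ref{sect:specialcases}; you instead compute $d(G,\mathcal{K}_m)$ directly from lemma~\ref{lem:calcdFK} and the averaging bound $\min_{g\in F_m} a_g \leq |F_m|^{-1}$. Your direct computation is cleaner and self-contained, and you are right that it requires no countability hypothesis on $S$: the paper's version routes through an auxiliary countable subset $X_0\cong\mathbb{N}^*$, which is what prompts the parenthetical ``(and countable?)'' in the statement, whereas your $\mathcal{K}_m=\{G\in\mathcal{K}: F_m\not\subseteq G\}$ works over any $S$.
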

\begin{proof}
For all $\mathcal{K}_1,\mathcal{K}_2 \in \mathcal{S}_n$ with $\mathcal{K}_1\neq \mathcal{K}_2$, we have
by proposition \ref{prop:uplowbound} that
$d(\mathcal{K}_1,\mathcal{K}_2) \geq 1/(N+2)$ for some $N$ with $N+1 \leq n$,
whence $d(\mathcal{K}_1,\mathcal{K}_2) \geq 1/(n+1)$. This proves that $\mathcal{S}_n$
is discrete.

Let $\mathcal{K} \in \mathcal{S}_n$. Since $\mathcal{K} \subset \mathcal{P}_f^*(S)$
we have, for any $F \not\in \mathcal{K}$ and any maximal subset $H$ of $F$ belonging to $\mathcal{K}$, that
$$
d(\mathcal{K},\mathcal{P}_f^*(S)) = \delta(\mathcal{P}_f^*(S),\mathcal{K}) \geqslant
d(F,\mathcal{K}) \geqslant 1 - \frac{|H|}{|F|}\geqslant 1 - \frac{n+1}{|F|}
$$
by proposition \ref{prop:uplowbound}.
Since $|F|$ can be chosen arbitrarily large this proves $d(\mathcal{K},\mathcal{P}_f^*(S)) = 1$. Since this holds
true for every $\mathcal{K} \in \mathcal{S}_n$ this proves the second claim.

Let now $\mathcal{K}_0 \in \mathcal{S}_n$. For any $\mathcal{K} \in \mathcal{S}$, if $\mathcal{K}\not\subset \mathcal{K}_0$, there exists $F \in \mathcal{K} \setminus \mathcal{K}_0$ with $|F| \leq n+2$. Then $d(\mathcal{K},\mathcal{K}_0) \geq \delta(\mathcal{K},\mathcal{K}_0) \geq d(F,\mathcal{K}_0) \geq 1/(n+2)$ by proposition \ref{prop:uplowbound}. If $\mathcal{K} \subset \mathcal{K}_n$ then $\mathcal{K} \in \mathcal{S}_n$ and we already
know $d(\mathcal{K},\mathcal{K}_0) \geq 1/(n+1)$.
It follows that $d(\mathcal{K},\mathcal{K}_0) \geq 1/(n+2)$
in all cases and this proves that all elements of $\mathcal{S}_{\infty}= \bigcup_n \mathcal{S}_n$ are isolated points in $\mathcal{S}$. Conversely, let $\mathcal{K} \not\in \mathcal{S}_{\infty}$ and let $n > 0$. We set $X = \bigcup \mathcal{K} \subset S$. Since $\mathcal{K} \not\in \mathcal{S}_{\infty}$ then $X$ contains a countable subset $X_0$ containing a face of $\mathcal{K}$ of cardinality $n$.
Up to relabelling, 
we can assume $X_0 =  \N^*$ and $\{1,\dots, n \} \in \mathcal{K}$. Let $\Delta(\N^*) = \mathcal{P}_f^*(\N^*) \in \mathcal{S}$, and $\Delta_n(\N^*) = \Delta(\N^*) \setminus \{ F \in \mathcal{P}_f^*(\N^*) \ | \ F \supset \{ 1,\dots, n \} \}$. We then let $\mathcal{K}_n = \mathcal{K} \cap \Delta_n(\N^ *)$. Then, by applying twice proposition \ref{prop:intersectcontract}, we get
$$
d(\mathcal{K},\mathcal{K}_n) \leqslant
d(\mathcal{K}\cap \Delta(\N^*),\mathcal{K}_n\cap \Delta(\N^*)) =
d(\mathcal{K}\cap \Delta(\N^*),\mathcal{K} \cap \Delta_n(\N^*)) \leqslant
d( \Delta(\N^*), \Delta_n(\N^*))
$$ 
and we know by section \ref{sect:specialcases} example (2) that 
$d( \Delta(\N^*), \Delta_n(\N^*)) \leq 1/n$.
Since $\{ 1 , \dots, n \} \in \mathcal{K} \setminus \mathcal{K}_n \}$ this proves that $0 < 
d(\mathcal{K},\mathcal{K}_n) \leq 1/n$. This proves that $\mathcal{K}$ is an accumulation point of $\mathcal{S}$ and
this concludes the proof of the proposition.
\end{proof}

As a corollary of proposition \ref{prop:uplowbound} we get the following.
\begin{proposition}
$\mathcal{S}$ is \emph{not} locally compact.
\end{proposition}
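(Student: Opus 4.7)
The plan is to show that $\mathcal{S}$ fails to be locally compact at the single point $\mathcal{K}^\star = \mathcal{P}_f^*(S)$: I will produce, inside every open ball around $\mathcal{K}^\star$, an infinite family of simplicial complexes that are uniformly separated from each other, so that no neighborhood of $\mathcal{K}^\star$ can be (sequentially) compact. To build the family, fix an integer $n \geq 2$ and, using that $S$ is infinite, choose infinitely many pairwise distinct $n$-element subsets $F_1, F_2, \dots$ of $S$. For each $i$ set
$$
\mathcal{K}_i \;=\; \mathcal{P}_f^*(S) \setminus \{ G \in \mathcal{P}_f^*(S) \mid G \supseteq F_i \},
$$
which is a simplicial complex because the removed collection is upward closed.

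Two estimates should then be verified. The \emph{upper bound} $d(\mathcal{K}^\star, \mathcal{K}_i) \leq 1/n$ is a direct transcription of example (2) of section~\ref{sect:specialcases} with $F_i$ playing the role of $\{1,\dots,n\}$: for every $G \in \mathcal{K}^\star$ with $G \not\in \mathcal{K}_i$, a maximal subset of $G$ that still lies in $\mathcal{K}_i$ must have the form $G \setminus \{h\}$ for some $h \in F_i$, and lemma~\ref{lem:calcdFK} together with the uniform-law choice of $f_0$ on $F_i$ gives $d(G, \mathcal{K}_i) \leq 1/n$. The \emph{lower bound} $d(\mathcal{K}_i, \mathcal{K}_j) \geq 1/n$ for $i \neq j$ is immediate: since $F_i$ and $F_j$ are distinct of the same cardinality $n$, we have $F_i \not\supseteq F_j$, hence $F_i \in \mathcal{K}_j$, while $F_i \notin \mathcal{K}_i$ by construction; proposition~\ref{prop:uplowbound}(1) then yields $d(F_i, \mathcal{K}_i) \geq 1/n$, and consequently
$$
d(\mathcal{K}_i, \mathcal{K}_j) \;\geq\; \delta(\mathcal{K}_j, \mathcal{K}_i) \;\geq\; d(F_i, \mathcal{K}_i) \;\geq\; \frac{1}{n}.
$$

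The conclusion is then routine. Suppose, for contradiction, that $\mathcal{K}^\star$ has a compact neighborhood $V$, and pick $\varepsilon > 0$ with the open ball $B(\mathcal{K}^\star, \varepsilon) \subseteq V$. Choosing $n$ so that $1/n < \varepsilon$ in the construction above, the infinite family $(\mathcal{K}_i)_{i \geq 1}$ lies entirely inside $B(\mathcal{K}^\star, \varepsilon) \subseteq V$, while all its pairwise distances are at least $1/n > 0$. Such a sequence admits no Cauchy subsequence, hence no convergent subsequence in $V$, contradicting the compactness of $V$. The only mildly delicate step is the upper bound $d(\mathcal{K}^\star, \mathcal{K}_i) \leq 1/n$; but, as noted, it is really just a rerun of the computation already carried out in example~(2) of section~\ref{sect:specialcases}, so no genuinely new difficulty is expected.
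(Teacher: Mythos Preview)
Your argument is correct. Both your proof and the paper's aim to exhibit an infinite, uniformly separated family inside every ball around a complex with cells of every dimension, but the two constructions differ. You \emph{remove} from $\mathcal{K}^\star=\mathcal{P}_f^*(S)$ the upward cone above each $n$-element set $F_i$; the resulting $\mathcal{K}_i$ lie within $1/n$ of $\mathcal{K}^\star$ by a direct rerun of example~(2) of section~\ref{sect:specialcases}, and the separation $d(\mathcal{K}_i,\mathcal{K}_j)\geq 1/n$ is one line from proposition~\ref{prop:uplowbound}(1). The paper instead \emph{adds} $(N_0{+}1)$-simplices to the $N_0$-skeleton $\mathcal{K}_{00}=\pi_{N_0}(\mathcal{K}_0)$, forming $\mathcal{K}_r=\mathcal{K}_{00}\cup\{F_1,\dots,F_r\}$, and relies on the assertion that $\pi_{N_0}(\mathcal{K}')=\pi_{N_0}(\mathcal{K}_0)$ forces $d(\mathcal{K}_0,\mathcal{K}')\leq 1/(N_0{+}2)$. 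But corollary~\ref{cor:ineqsquel} only gives the \emph{lower} bound; indeed each $\mathcal{K}_r$ is finite-dimensional, so proposition~\ref{prop:SndiscrSinfnotdense} yields $d(\mathcal{K}_0,\mathcal{K}_r)=1$ when $\mathcal{K}_0=\mathcal{P}_f^*(S)$. Your route thus sidesteps a genuine gap in the paper's argument.

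One small wording fix: in your upper-bound paragraph, the inequality $d(G,\mathcal{K}_i)\leq 1/n$ does not come from a ``uniform-law choice of $f_0$'' but from the pigeonhole bound $\min_{h\in F_i}\lambda\bigl(f_0^{-1}(\{h\})\bigr)\leq 1/n$, valid for \emph{every} $f_0$ with $f_0(\Omega)=G\supseteq F_i$, exactly as in example~(2).
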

\begin{proof}
In order to prove this, it is enough to find $\mathcal{K}_0 \in \mathcal{S}$ such that, for every $\eps>0$,
the closed ball centered at $\mathcal{K}_0$ with radius $\eps$ is \emph{not} compact. Let
us consider $\mathcal{K}_0 \in \mathcal{S}$ having cells of every dimension, and $\eps>0$. Let us choose
$N_0$ with $N_0+2 \geq \eps$. Every $\mathcal{K}'$ with $\pi_{N_0}(\mathcal{K}')= \pi_{N_0}(\mathcal{K}_0)$
will satisfy $d(\mathcal{K}_0,\mathcal{K}') \leq 1/(N_0+2) \leq \eps$ and therefore belong to the ball.
Let us set $\mathcal{K}_{00} = \pi_{N_0}(\mathcal{K}_0)$. Notice that
$\pi_{1+N_0}(\mathcal{K}_0) \setminus \pi_{N_0}(\mathcal{K}_0)$ is infinite,
for otherwise $\mathcal{K}_0$ would be finite, contradicting our assumption. We denote
$\pi_{1+N_0}(\mathcal{K}_0) \setminus \pi_{N_0}(\mathcal{K}_0) = \{ F_1,F_2,\dots \}$ and set
$\mathcal{K}_r = \mathcal{K}_{00} \cup \{ F_1,\dots,F_r \}$. It is a simplicial complex,
and we have $d(\mathcal{K}_r,\mathcal{K}_s) \geq 1/(2+N_0)$ by the above, hence it does not
admit any accumulation point. This proves that the neighborhood is not compact and the claim.
\end{proof}
Let $\mathcal{S}_f \subset \mathcal{S}$ denote the collection of \emph{finite} simplicial complexes. For
$\mathcal{K} \in \mathcal{S}$ we have $\mathcal{K} \in \mathcal{S}_f$ iff $\exists E \subset S$
finite such that $\mathcal{K} \subset \mathcal{P}(E)$ iff $\pi_0(\mathcal{K})$ is finite.

\begin{proposition} $\mathcal{S}_f$ is discrete and, for all $\mathcal{K} \in \mathcal{S} \setminus \mathcal{S}_f$, we have $d(\mathcal{K},\mathcal{S}_f) = 1$.
\end{proposition}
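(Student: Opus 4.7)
The plan is to observe that both claims are almost immediate consequences of results already established: the discreteness of $\mathcal{S}_f$ follows from the fact that every finite-dimensional complex is an isolated point of $\mathcal{S}$ (proposition \ref{prop:SndiscrSinfnotdense}), and the distance statement follows from corollary \ref{cor:dist1vertex} once we notice that membership in $\mathcal{S}_f$ is equivalent to finiteness of the vertex set.

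For the first assertion, let $\mathcal{K}_0 \in \mathcal{S}_f$. By the characterization recalled just before the statement, $\mathcal{K}_0 \subset \mathcal{P}(E)$ for some finite $E \subset S$, so $\mathcal{K}_0 \in \mathcal{S}_n$ for $n = |E|-1$, hence $\mathcal{K}_0 \in \mathcal{S}_\infty = \bigcup_m \mathcal{S}_m$. Proposition \ref{prop:SndiscrSinfnotdense} tells us exactly that every element of $\mathcal{S}_\infty$ is isolated in $\mathcal{S}$; the proof there even provides the explicit bound $d(\mathcal{K}_0,\mathcal{K}) \geq 1/(n+2)$ for all $\mathcal{K} \in \mathcal{S} \setminus \{\mathcal{K}_0\}$. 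Restricting this bound to $\mathcal{K} \in \mathcal{S}_f$ gives the desired discreteness.

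For the second assertion, recall that $\mathcal{K} \in \mathcal{S}_f$ if and only if $\bigcup \mathcal{K}$ is finite. Indeed, a simplicial complex contains every singleton of each of its faces, so $\mathcal{K}$ being finite forces $\bigcup \mathcal{K}$ to be finite; conversely, if $\bigcup \mathcal{K} \subset E$ with $E$ finite, then $\mathcal{K} \subset \mathcal{P}^*_f(E)$, a finite set. Now fix $\mathcal{K} \in \mathcal{S} \setminus \mathcal{S}_f$. Then $\bigcup \mathcal{K}$ is infinite, while for every $\mathcal{K}' \in \mathcal{S}_f$ the set $\bigcup \mathcal{K}'$ is finite, so $\bigcup \mathcal{K} \neq \bigcup \mathcal{K}'$. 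Corollary \ref{cor:dist1vertex} then yields $d(\mathcal{K},\mathcal{K}') = 1$ for every $\mathcal{K}' \in \mathcal{S}_f$, and taking the infimum gives $d(\mathcal{K},\mathcal{S}_f) = 1$.

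There is no real obstacle; the statement is a direct corollary of proposition \ref{prop:SndiscrSinfnotdense} and corollary \ref{cor:dist1vertex}, and the proof is just a matter of pointing to these results after the two elementary observations that $\mathcal{S}_f \subset \mathcal{S}_\infty$ and that $\mathcal{S}_f$ is characterized by finiteness of the vertex set.
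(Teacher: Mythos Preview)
Your proof is correct and takes a genuinely different, more economical route than the paper's.

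For discreteness, the paper argues directly: given $\mathcal{K} \in \mathcal{S}_f$ with vertex set $K$, any $\mathcal{K}'$ with a different vertex set is at distance $1$, and only finitely many complexes share the vertex set $K$, so one takes the minimum of those finitely many positive distances. You instead observe $\mathcal{S}_f \subset \mathcal{S}_\infty$ and invoke proposition~\ref{prop:SndiscrSinfnotdense}, which already shows every element of $\mathcal{S}_\infty$ is isolated in $\mathcal{S}$. Your route is shorter and reuses work already done; the paper's is self-contained but essentially repeats part of that earlier argument.

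For the second claim, the paper uses proposition~\ref{prop:uplowbound}: for $F \in \mathcal{K}_1$ and $h$ the maximal face size of $\mathcal{K}_2$, one has $d(F,\mathcal{K}_2) \geq 1 - h/|F|$, and then asserts that $|F|$ can be taken arbitrarily large. This last step is actually delicate, since $\mathcal{K}_1 \notin \mathcal{S}_f$ only guarantees infinitely many \emph{vertices}, not arbitrarily large faces; the argument still goes through because an infinite vertex set forces some vertex of $\mathcal{K}_1$ outside $\bigcup\mathcal{K}_2$, giving distance $1$ directly, but the paper does not spell this out. Your appeal to corollary~\ref{cor:dist1vertex} handles exactly this point cleanly: infinite versus finite vertex sets cannot coincide, so the distance is $1$. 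Your argument is both shorter and sidesteps the gap in the paper's phrasing.
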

\begin{proof}
Let $\mathcal{K} \in \mathcal{S}_f$, and $K = \pi_0(\mathcal{K}) \subset S$. We want to find $\eps > 0$ such that
$d(\mathcal{K}',\mathcal{K}) < \eps$ and $\mathcal{K}' \in \mathcal{S}_f$ implies $\mathcal{K}'=\mathcal{K}$.
Note that, if $\pi_0(\mathcal{K}' ) \neq K$, then $d(\mathcal{K}',\mathcal{K}) \geq d(\pi_0(\mathcal{K}'),\pi_0(\mathcal{K})) =1$.
By assuming $\eps < 0$ we can thus assume $\pi_0(\mathcal{K}')=K$. But there exists only a finite number of simplicial
complexes with $\pi_0(\mathcal{K}')=K$ hence letting $\eps = \min(1/2, \min \{ d(\mathcal{K},\mathcal{K}')/2; \pi_0( \mathcal{K}') = \mathcal{K} \})$
we get what we want and $\mathcal{S}_f$ is discrete.

For the second claim, let $\mathcal{K}_1 \in  \mathcal{S} \setminus \mathcal{S}_f$ and $\mathcal{K}_2 \in  \mathcal{S}_f$. We have
$d(\mathcal{K}_1,\mathcal{K}_2) \geq d(F,\mathcal{K}_2)$ for any $F \in \mathcal{K}_1$. We have
$d(F,\mathcal{K}_2) \geq 1 - |H|/|F|$ where $H < F$ has maximal cardinality with $H \in \mathcal{K}_2$. Since $\mathcal{K}_2$
is finite, letting $h$ equal to the maximal cardinality of its elements we get $d(F,\mathcal{K}_2) \geq 1 - |H|/|F|$. But since $F$ can be chosen
as large as needed, this proves $d(\mathcal{K}_1,\mathcal{K}_2) = 1$ and the claim.

\end{proof}

In particular, $\mathcal{S}_f$ is \emph{not} dense inside $\mathcal{S}$.

\medskip

We let $\mathbf{Met}_1$ denote the category of all the metric spaces with diameter at most $1$ and $1$-Lipschitz maps.
This category admits arbitrary limits (see e.g. \cite{CCS} proposition 4.7) and any inverse limit of complete metric spaces
is complete (\cite{CCS} proposition 4.10). By the explicit construction of the inverse limit in this category (\cite{CCS} prop. 4.7) the result above
justifies the first part of the following claim.

\begin{proposition} \label{prop:Sinvlimit} Let us consider the inverse directed system of the  $(\mathcal{S}_n)_{n \geq 0}$ with \emph{surjective} transition
maps $(\pi_{n})_{|\mathcal{S}_n} : \mathcal{S}_n \to \mathcal{S}_m$. Its inverse limit inside $\mathbf{Met}_1$ is naturally identified
with $\mathcal{S}$, and the underlying topology of $\mathcal{S}$ is (strictly) finer than the topology of the inverse limit inside $\mathbf{Top}$ of (the directed system made of the underlying topological spaces of) the $(\mathcal{S}_n)_{n \geq 0}$. The space
$\mathcal{S}$ is not discrete, and is totally disconnected.
\end{proposition}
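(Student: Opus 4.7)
The plan is to address the four assertions in sequence: first the identification of $\mathcal{S}$ with the inverse limit in $\mathbf{Met}_1$, then the comparison between the metric topology on $\mathcal{S}$ and the inverse-limit topology in $\mathbf{Top}$, and finally the two quick consequences that $\mathcal{S}$ is neither discrete nor connected in any non-trivial way. Throughout I take $S$ to be infinite, as is the standing assumption in the preceding proposition.

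For the identification, I would define $\Phi : \mathcal{S} \to \varprojlim \mathcal{S}_n$ by $\mathcal{K} \mapsto (\pi_n(\mathcal{K}))_{n \geq 0}$. It is well-defined thanks to $\pi_m \circ \pi_n = \pi_m$ for $m \leq n$, injective because $\mathcal{K} = \bigcup_n \pi_n(\mathcal{K})$, and surjective because any compatible family $(\mathcal{K}_n)$ assembles to the simplicial complex $\bigcup_n \mathcal{K}_n$ whose $n$-skeleton is precisely $\mathcal{K}_n$. Proposition \ref{prop:pinlip} then supplies the isometry $d(\mathcal{K}_1,\mathcal{K}_2) = \sup_n d(\pi_n(\mathcal{K}_1),\pi_n(\mathcal{K}_2))$, which is the defining metric of the inverse limit in $\mathbf{Met}_1$. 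Surjectivity of each transition map $\pi_m : \mathcal{S}_n \to \mathcal{S}_m$ (for $m \leq n$) is immediate, since $\mathcal{S}_m \subset \mathcal{S}_n$ and $\pi_m$ restricts to the identity on $\mathcal{S}_m$.

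For the topology comparison, I would first observe that each cylinder $\pi_n^{-1}(\{\mathcal{K}_n\})$ is clopen in the metric topology, since $\mathcal{S}_n$ is discrete by proposition \ref{prop:SndiscrSinfnotdense} and $\pi_n$ is $1$-Lipschitz. As such cylinders form a basis of the inverse-limit topology in $\mathbf{Top}$, the metric topology is at least as fine. For strictness I would test the open ball $B(\Delta(S), 1/2)$ at the full complex $\Delta(S) = \mathcal{P}_f^*(S)$: example (3) of section \ref{sect:specialcases} gives $d(\Delta(S), \pi_n(\Delta(S))) = 1$ for every $n$, so every cylinder $\pi_n^{-1}(\{\pi_n(\Delta(S))\})$ containing $\Delta(S)$ also contains $\pi_n(\Delta(S))$, which lies at distance $1$ from $\Delta(S)$ and hence outside the ball. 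Thus no basic open of the inverse-limit topology sits inside $B(\Delta(S), 1/2)$, while this ball is open in the metric topology.

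The last two assertions then come almost for free. Proposition \ref{prop:SndiscrSinfnotdense} identifies the isolated points of $\mathcal{S}$ with $\mathcal{S}_\infty$, and $\Delta(S) \notin \mathcal{S}_\infty$, so $\mathcal{S}$ is not discrete. For total disconnection, given distinct $\mathcal{K}_1, \mathcal{K}_2 \in \mathcal{S}$ I would choose $n$ with $\pi_n(\mathcal{K}_1) \neq \pi_n(\mathcal{K}_2)$ (such $n$ exists because $\mathcal{K}_i = \bigcup_n \pi_n(\mathcal{K}_i)$) and observe that $\pi_n^{-1}(\{\pi_n(\mathcal{K}_1)\})$ is a clopen separating them, forcing any connected component containing one to miss the other. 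The most delicate step is really the strict-fineness claim: one needs a point whose metric neighborhood basis cannot be reproduced by prescribing finitely many skeleta, and $\Delta(S)$ is the natural witness precisely because its $n$-skeleton stays at maximal distance $1$ from it for every $n$.
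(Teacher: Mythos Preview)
Your proof is correct and follows the same overall architecture as the paper's: the identification with the $\mathbf{Met}_1$ inverse limit via proposition~\ref{prop:pinlip}, continuity of the $\pi_n$ for the ``finer'' direction, and the clopen-fiber argument for total disconnection are all essentially identical to the paper's treatment.

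The one place where you diverge is the argument for \emph{strict} fineness. The paper argues abstractly that $\mathcal{S}_\infty$ is dense in the pro-discrete topology $\mathcal{S}_\pi$ (every cylinder $\pi_n^{-1}(\{\pi_n(\mathcal{K})\})$ contains the finite-dimensional complex $\pi_n(\mathcal{K})$) but not in the metric topology (by proposition~\ref{prop:SndiscrSinfnotdense}), so the two topologies differ. You instead exhibit a single explicit witness: the metric ball $B(\Delta(S),1/2)$ fails to contain any cylinder neighborhood of $\Delta(S)$, because each such cylinder contains $\pi_n(\Delta(S))$ at distance~$1$. Your argument is really the paper's density argument specialized to the point $\Delta(S)$ and made quantitative via example~(3) of section~\ref{sect:specialcases}; it is more concrete and self-contained, while the paper's formulation makes clearer that the phenomenon is global (density fails everywhere outside $\mathcal{S}_\infty$, not just at $\Delta(S)$). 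Similarly, for non-discreteness the paper invokes the explicit convergent sequence $\mathcal{K}_n \to \Delta(S)$ from section~\ref{sect:specialcases} example~(2), whereas you cite the characterization of isolated points from proposition~\ref{prop:SndiscrSinfnotdense}; these amount to the same thing, since that characterization is itself proved via such a sequence.
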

\begin{proof} By construction (see \cite{CCS} prop. 4.7) the inverse limit in $\mathbf{Met}_1$ can be constructed as the obvious
subspace of $\hat{\mathcal{S}} = \prod_n \mathcal{S}_n$ with metric the supremum of the metrics of the projection. By proposition \ref{prop:pinlip}
this is naturally identified with $\mathcal{S}$. 

In order to prove the second claim we now identify $\mathcal{S}$ with the subspace of $\hat{\mathcal{S}} = \prod_n \mathcal{S}_n$
described by the transition maps, and denote $\mathcal{S}_{\pi}$ the same subset by endowed with the
product topology. Since the projection maps are $1$-Lipschitz hence continuous, we get that the
natural map $\mathcal{S} \to \mathcal{S}_{\pi}$ is continuous and this proves that the metric topology is finer. The fact
that these topologies are not the same is a consequence of the density of $\mathcal{S}_{\infty}$ inside $\mathcal{S}_{\pi}$, while it is not inside $\mathcal{S}$ by proposition \ref{prop:SndiscrSinfnotdense}.

The fact that the topology of $\mathcal{S}$ is not discrete is a consequence of the fact that
we constructed a sequence $\mathcal{K}_n$ of simplicial complexes such that $\mathcal{K}_n \to \Delta(S)$
and $d(\mathcal{K}_n,\Delta(S)) \geq 1/n$. The fact that it is totally disconnected is a consequence
of the fact that its topology is finer that the pro-discrete topopogy of $\mathcal{S}_{\pi}$ which is itself
totally discontinuous (alternatively: if $x\neq y$ belong to a connected subset $C$ of $\mathcal{S}$, 
then $\pi_n(C)$ is connected inside the discrete space $\mathcal{S}_n$ hence $\pi_n(x) = \pi_n(y)$
for all $n$ whence $x = y$).

\end{proof}

The topology of $\mathcal{S}$ given by the topological inverse limit of the $\mathcal{S}_n$ will be called in
the sequel its \emph{pro-discrete topology}.

\begin{corollary} $\mathcal{S}$ is a complete metric space.
\end{corollary}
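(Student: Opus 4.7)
The plan is to invoke Proposition \ref{prop:Sinvlimit}, which identifies $\mathcal{S}$ (as a metric space) with the inverse limit of the system $(\mathcal{S}_n)_{n \geq 0}$ inside $\mathbf{Met}_1$. Since the cited result from \cite{CCS} (proposition 4.10) says that any inverse limit in $\mathbf{Met}_1$ of complete metric spaces is complete, it suffices to observe that every $\mathcal{S}_n$ is complete. But by Proposition \ref{prop:SndiscrSinfnotdense}, each $\mathcal{S}_n$ is discrete with a uniform positive lower bound $1/(n+1)$ on the distance between distinct points, so Cauchy sequences in $\mathcal{S}_n$ are eventually constant, and completeness is immediate.

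If one prefers a direct argument, I would unfold the definition: let $(\mathcal{K}_m)_{m}$ be a Cauchy sequence in $\mathcal{S}$. Since each $\pi_n$ is $1$-Lipschitz (Proposition \ref{prop:pinlip}), the sequence $(\pi_n(\mathcal{K}_m))_m$ is Cauchy in $\mathcal{S}_n$, hence eventually constant at some $\mathcal{L}_n \in \mathcal{S}_n$. The commutation relations $\pi_n \circ \pi_{n'} = \pi_n$ for $n \leq n'$ (pushed through the stationary tail) give $\pi_n(\mathcal{L}_{n'}) = \mathcal{L}_n$, and the $\mathcal{L}_n$ assemble into a well-defined simplicial complex $\mathcal{L} = \bigcup_n \mathcal{L}_n \in \mathcal{S}$ satisfying $\pi_n(\mathcal{L}) = \mathcal{L}_n$ for every $n$.

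The final step is to verify $d(\mathcal{K}_m, \mathcal{L}) \to 0$. Fix $\eps > 0$ and pick $M$ with $d(\mathcal{K}_m, \mathcal{K}_{m'}) < \eps$ for all $m, m' \geq M$. For any $n$, there exists $M_n \geq M$ with $\pi_n(\mathcal{K}_{m'}) = \mathcal{L}_n$ for all $m' \geq M_n$; applying the $1$-Lipschitz property of $\pi_n$ with such an $m'$ yields $d(\pi_n(\mathcal{K}_m), \mathcal{L}_n) < \eps$ for every $m \geq M$. Taking the supremum over $n$ and using the formula $d(\mathcal{K}_m, \mathcal{L}) = \sup_n d(\pi_n(\mathcal{K}_m), \pi_n(\mathcal{L}))$ from Proposition \ref{prop:pinlip} gives $d(\mathcal{K}_m, \mathcal{L}) \leq \eps$ for all $m \geq M$.

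There is no real obstacle here; the only point requiring care is that the threshold $M_n$ depends on $n$, but this is harmless since the bound $\eps$ obtained after the projection does not depend on the choice of $m'$, only on $m \geq M$.
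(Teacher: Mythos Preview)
Your first paragraph is exactly the paper's proof: each $\mathcal{S}_n$ is discrete (Proposition \ref{prop:SndiscrSinfnotdense}), hence complete, and the inverse limit of complete spaces in $\mathbf{Met}_1$ is complete by \cite{CCS} proposition 4.10, so $\mathcal{S}$ is complete via the identification of Proposition \ref{prop:Sinvlimit}. Your additional direct argument is also correct and simply unfolds what the abstract inverse-limit result is doing in this concrete situation; the paper does not include it, but it is a useful sanity check and shows the completeness can be obtained without citing \cite{CCS}.
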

\begin{proof} Since the metric spaces $\mathcal{S}_n$ are discrete by proposition \ref{prop:SndiscrSinfnotdense}, they are complete, therefore their limit insite $\mathbf{Met}_1$
is complete (\cite{CCS} proposition 4.10), and this proves the claim.
\end{proof}

We end this section by noticing that the standard operation of barycentric subdivision is very brutal with respect to the Hausdorff metric. First recall, that the barycentric
subdivision $sd(\KK) \subset \PF^*(\KK)$ of
the simplicial complex $\KK \subset \PF^*(S)$ is classically defined as
$$
sd(\KK) = \{ \{ x_1,\dots,x_r \} \in \PF(\KK) \ | r \geq 1, \ x_1 \subsetneq x_2 \subsetneq \dots \subsetneq x_r \}
$$
and that it is a simplicial complex. Using the set-theoreric notation $\bigcup \{ x_1,\dots, x_r \} = x_1 \cup \dots \cup x_r$,
we have that, for $X \neq \emptyset$,  $X \in sd(\KK) \Leftrightarrow \bigcup X \in \KK$.
The map $X \mapsto \bigcup X$ maps the simplices of $sd(\mathcal{K})$ to simplices of $\mathcal{K}$.
One then gets the following.
\begin{proposition}
If $\mathcal{K}_1,\mathcal{K}_2 \subset \PF^*(S)$ are two simplicial complexes,
with $\KK_1 \neq \KK_2$ then $d(sd(\KK_1),sd(\KK_2)) = 1$.
\end{proposition}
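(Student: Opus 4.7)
The plan is to reduce this immediately to Corollary \ref{cor:dist1vertex}, which says that two simplicial complexes are at distance exactly $1$ whenever their vertex sets (that is, the unions $\bigcup \mathcal{K}_i$) differ.

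First I would identify the vertex set of $sd(\mathcal{K})$. By the defining formula, the singletons belonging to $sd(\mathcal{K})$ are exactly the $\{x_1\}$ with $x_1 \in \mathcal{K}$, so $\bigcup sd(\mathcal{K}) = \mathcal{K}$, viewing $\mathcal{K}$ as a subset of $\PF^*(S)$ and hence $sd(\mathcal{K})$ as a simplicial complex with vertex set inside $\PF^*(S)$. (This is where the remark at the end of Section \ref{sect:dLK} is used: the distance does not depend on the ambient vertex set, so we may as well compute it over the common vertex set $\PF^*(S)$.)

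Next, from $\mathcal{K}_1 \neq \mathcal{K}_2$ as subsets of $\PF^*(S)$, I get
$$\bigcup sd(\mathcal{K}_1) = \mathcal{K}_1 \neq \mathcal{K}_2 = \bigcup sd(\mathcal{K}_2),$$
so the vertex sets of the two subdivided complexes differ. Applying Corollary \ref{cor:dist1vertex} to the pair $sd(\mathcal{K}_1), sd(\mathcal{K}_2) \in \mathcal{S}$ (over the vertex set $\PF^*(S)$) then yields $d(sd(\mathcal{K}_1), sd(\mathcal{K}_2)) = 1$, which is the claim.

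There is essentially no obstacle here beyond recognizing that $sd(-)$ interpreted as a simplicial complex has its $0$-skeleton canonically indexed by the simplices of the original complex, so that distinctness of $\mathcal{K}_1$ and $\mathcal{K}_2$ is already visible at the level of vertices of $sd$. The proposition is really a corollary of Corollary \ref{cor:dist1vertex}: any disagreement on a single face is amplified by subdivision into a disagreement at the vertex level, which forces the distance to jump to $1$.
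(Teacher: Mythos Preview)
Your proof is correct and rests on the same idea as the paper's: a simplex $F \in \mathcal{K}_1 \setminus \mathcal{K}_2$ becomes a vertex of $sd(\mathcal{K}_1)$ that is absent from $sd(\mathcal{K}_2)$, forcing the distance to $1$. The paper carries this out by hand (taking the constant map with value $F$ and checking directly that it lies at distance $1$ from $L(sd(\mathcal{K}_2))$), whereas you observe that $\bigcup sd(\mathcal{K}) = \mathcal{K}$ and invoke Corollary~\ref{cor:dist1vertex}; this is the same argument, just packaged via the earlier result rather than replayed.
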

\begin{proof} By symmetry we can assume there exists $F \in \KK_1 \setminus \KK_2$.
Then, for every $\mathcal{F} \in sd(\KK_2)$, we have $F \not\in \mathcal{F}$.
Therefore, if $f$ is the constant map equal to $F$, we have $d(f,\KK_2) = 1$
whence $\delta(\KK_1,\KK_2) = 1$ and $d(\KK_1,\KK_2) = 1$. 
\end{proof}

\section{Hausdorff metric on isomorphism classes}

\subsection{Definition and independence w.r.t. the ambient vertex set}
Let $K_1$, $K_2$ be two isomorphism classes of simplicial complexes. This
means a priori that there a fixed set $S$ is chosen and that $K_1,K_2$ are equivalence classes
on the collection of simplicial complexes with vertices belonging to $S$, where
the equivalence relation is $\mathcal{K}_1 \sim  \mathcal{K}_2$ if there
exists $\sigma \in \mathfrak{S}(S)$ such that $F \mapsto \sigma(F)$
is a bijection $\mathcal{K}_1 \to \mathcal{K}_2$. Therefore this notion
a priori depends on the choice of an ambient vertex set $S$.

We define a distance function on the set $\mathcal{I}(S)$ of such isomorphism classes
by
$$
d_S(K_1,K_2) = \inf_{\stackrel{\mathcal{K}_1 \in K_1}{\mathcal{K}_2 \in K_2}}
d_S(\mathcal{K}_1,\mathcal{K}_2)
$$
where we denote $d_S(\mathcal{K}_1,\mathcal{K}_2)$ the distance
$d(\mathcal{K}_1,\mathcal{K}_2)$ precedently defined, where the set $S$ was previously understood.
For any $\mathcal{K}_1 \in K_1$ and $\mathcal{K}_2 \in K_2$, one immediately gets that
$$
d_S(K_1,K_2) = \inf_{\sigma_1,\sigma_2 \in \mathfrak{S}(S)}
d(\sigma_1(\mathcal{K}_1),\sigma_2(\mathcal{K}_2))
= \inf_{\sigma \in \mathfrak{S}(S)}
d_S(\mathcal{K}_1,\sigma(\mathcal{K}_2))
$$
and this easily implies that $d_S$ is indeed a distance function on $\mathcal{I}(S)$. 

When $S \subset T$, we identify $\mathfrak{S}(S)$ with the subgroup of $\mathfrak{S}(T)$
made of the bijections which are the identity on $T \setminus S$. If $\mathcal{K}_1,\mathcal{K}_2$ are simplicial complexes over $S$, then they are isomorphic as simplicial complexes over $S$ iff they are isomorphic as simplicial complexes over $T$. Therefore $\mathcal{I}(S)$ is naturally identified with a subset of $\mathcal{I}(T)$. 
We first check that.
\begin{proposition} \label{prop:indepST}
If $S \subset T$ and $K_1,K_2 \in \mathcal{I}(S)$, then
$$
d_S(K_1,K_2) = \inf_{\sigma \in \mathfrak{S}(T)}
d_T(\mathcal{K}_1,\sigma(\mathcal{K}_2))
$$
\end{proposition}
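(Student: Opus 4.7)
The plan is to prove the two inequalities separately. For the easy direction $d_S(K_1,K_2) \geq \inf_{\sigma \in \mathfrak{S}(T)} d_T(\mathcal{K}_1, \sigma(\mathcal{K}_2))$, I would use the natural inclusion $\mathfrak{S}(S) \hookrightarrow \mathfrak{S}(T)$ obtained by extending a permutation of $S$ by the identity on $T \setminus S$, combined with the ambient-independence of the distance established at the end of Section \ref{sect:dLK}: for any $\sigma \in \mathfrak{S}(S)$, both $\mathcal{K}_1$ and $\sigma(\mathcal{K}_2)$ remain simplicial complexes over $S$, so $d_T(\mathcal{K}_1, \sigma(\mathcal{K}_2)) = d_S(\mathcal{K}_1, \sigma(\mathcal{K}_2))$. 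Hence the infimum on the right is taken over a larger family and is $\leq d_S(K_1, K_2)$.

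For the reverse inequality, I would fix an arbitrary $\sigma \in \mathfrak{S}(T)$ and show that $d_T(\mathcal{K}_1, \sigma(\mathcal{K}_2)) \geq d_S(K_1, K_2)$; then take the infimum over $\sigma$. Set $V_2 = \bigcup \mathcal{K}_2 \subset S$, and split on whether $\sigma(V_2) \subset S$. If $\sigma(V_2) \not\subset S$, then $\bigcup \sigma(\mathcal{K}_2) = \sigma(V_2)$ contains a vertex outside $S$ while $\bigcup \mathcal{K}_1 \subset S$, so their supports differ and corollary \ref{cor:dist1vertex} (applied in $\mathcal{T}$ over $T$) forces $d_T(\mathcal{K}_1, \sigma(\mathcal{K}_2)) = 1 \geq d_S(K_1, K_2)$. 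If instead $\sigma(V_2) \subset S$, then $\sigma(\mathcal{K}_2)$ is itself a simplicial complex over $S$, and ambient-independence gives $d_T(\mathcal{K}_1, \sigma(\mathcal{K}_2)) = d_S(\mathcal{K}_1, \sigma(\mathcal{K}_2))$. Because $V_2$ and $\sigma(V_2)$ are subsets of $S$ of equal cardinality, their complements in $S$ have equal cardinality as well, so the bijection $\sigma|_{V_2} : V_2 \to \sigma(V_2)$ extends to some $\tau \in \mathfrak{S}(S)$. Then $\tau(\mathcal{K}_2) = \sigma(\mathcal{K}_2)$, and therefore
$$d_S(K_1, K_2) \leq d_S(\mathcal{K}_1, \tau(\mathcal{K}_2)) = d_T(\mathcal{K}_1, \sigma(\mathcal{K}_2)),$$
as desired.

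The main subtlety is really just the case split: a generic permutation of $T$ need not preserve $S$, but the ``bad'' case in which $\sigma$ moves some vertex of $\mathcal{K}_2$ outside $S$ is rendered trivial by corollary \ref{cor:dist1vertex}, while in the ``good'' case the action of $\sigma$ on $\mathcal{K}_2$ is already witnessed by an element of $\mathfrak{S}(S)$ via the extension argument. No further analytic input beyond ambient-independence and the ``distance equals $1$'' criterion is needed.
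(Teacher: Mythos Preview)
Your argument has a genuine gap in the ``good'' case. The claim that ``because $V_2$ and $\sigma(V_2)$ are subsets of $S$ of equal cardinality, their complements in $S$ have equal cardinality as well'' is false when $V_2$ is infinite. For a concrete failure, take $S=\mathbf{N}$, $T=\mathbf{Z}$, $\sigma(n)=n+1$, and let $\mathcal{K}_2$ be any complex with $V_2=\bigcup\mathcal{K}_2=\mathbf{N}$. Then $\sigma(V_2)=\mathbf{N}_{\geq 1}\subset S$, but $|S\setminus V_2|=0$ while $|S\setminus\sigma(V_2)|=1$, so no $\tau\in\mathfrak{S}(S)$ extends $\sigma|_{V_2}$ and no $\tau\in\mathfrak{S}(S)$ can satisfy $\tau(\mathcal{K}_2)=\sigma(\mathcal{K}_2)$ at all (their vertex sets would have to coincide, which is impossible). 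So your extension step breaks down precisely in the infinite-vertex situation, and the inequality $d_S(K_1,K_2)\leq d_T(\mathcal{K}_1,\sigma(\mathcal{K}_2))$ is not established there.

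By contrast, the paper does not attempt to realise $\sigma$ globally by an element of $\mathfrak{S}(S)$. It works at the level of the asymmetric quantity $\delta$ and, for a near-optimal $\sigma_0\in\mathfrak{S}(T)$, treats each $f_0$ with $f_0(\Omega)\in\sigma_0\mathcal{K}_1$ individually: the face $f_0(\Omega)$ is \emph{finite}, so one only needs to extend the restriction of $\sigma_0$ to a finite subset of $S$ to some $\sigma_0'\in\mathfrak{S}(S)$, which is always possible. Your global extension idea is cleaner when it works (e.g.\ for finite complexes, where your cardinality claim is valid), but the face-by-face reduction is what lets the paper sidestep the infinite-cardinality obstruction.
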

\begin{proof}

Let $K_1, K_2 \in \mathcal{I}(S) \subset \mathcal{I}(T)$, and $\mathcal{K}_1^S,
\mathcal{K}_2^S$  be representatives of $K_1,K_2$ in $\mathcal{S}$.
We have
$$
d_T(K_1,K_2) = \inf_{\sigma \in \mathfrak{S}(T)} d_T(\sigma \KK_1^S, \KK_2^S) \leqslant 
\inf_{\sigma \in \mathfrak{S}(S)} d_T(\sigma \KK_1^S, \KK_2^S)
=
\inf_{\sigma \in \mathfrak{S}(S)} d_S(\sigma \KK_1^S, \KK_2^S) = d_S(K_1,K_2)
$$
where we use the shortcut $\sigma \mathcal{K}$ for $\sigma(\mathcal{K})$.
We need to prove
$$
\inf_{\sigma \in \mathfrak{S}(T)} d_T(\sigma \KK_1^S, \KK_2^S) \geqslant 
\inf_{\sigma \in \mathfrak{S}(S)} d_T(\sigma \KK_1^S, \KK_2^S)
$$
and by symmetry it is sufficient to prove
$$
\inf_{\sigma \in \mathfrak{S}(T)} \delta_T(\sigma \KK_1^S, \KK_2^S) \geqslant 
\inf_{\sigma \in \mathfrak{S}(S)} \delta_T(\sigma \KK_1^S, \KK_2^S)
$$
Let $\eps > 0$. There is $\sigma_0 \in \mathfrak{S}(T)$ such that
$\delta_T(\sigma_0 \KK_1,\KK_2) \leq \inf_{\sigma \in \mathfrak{S}(T)} \delta_T(\sigma \KK_1^S, \KK_2^S) + \eps$.
Now  
$$\delta_T(\sigma_0 \KK_1,\KK_2) = \sup_{f_0(\Omega) \in \sigma_0 \KK_1} d_T(f_0,L(\KK_2))
$$
Now, for any given such $f_0$ such that $f_0(\Omega) \in \sigma_0 \KK_1$,
either $d_T(f_0,L(\KK_2)) = 1$, or $f_0(\Omega) \cap S \neq\emptyset$.
In the latter case we can assume that $f_0(\Omega) = \{ \sigma_0(x_1),\dots,\sigma_0(x_r)\}$ with $\{x_1,\dots,x_r \} \in \KK_1$ and $\sigma_0(x_1),\dots,\sigma_0(x_i) \in S$ with
$i \geq 1$. Let $f'_0(t) = f_0(t)$ it $f_0(t) \in S$ and $f'_0(t) = \sigma_0(x_1)$
otherwise. We have $f'_0(\Omega) \subset f_0(\Omega) \in \sigma_0 \KK_1$
hence $f'_0(\Omega) \in \sigma_0 \KK_1$. Note that $x_k, \sigma_0(x_k) \in S$ for $k \leq i$. Let us choose $\sigma'_0 \in \mathfrak{S}(S)$ such that $\sigma'_0(x_k) = \sigma_0(x_k)$ for $k \leq i$. Then $d_T(f_0,L(\KK_2)) \geq d_T(f'_0,L(\KK_2))
= d_S(f'_0,L(\KK_2))$ and $f'_0(\Omega) \in \sigma'_0 \KK_1$. It follows
that 
$$
\sup_{f_0(\Omega) \in \sigma_0 \KK_1} d_T(f_0,L(\KK_2))
\geqslant
\sup_{f'_0(\Omega) \in \sigma'_0 \KK_1} d_S(f'_0,L(\KK_2))
= \delta_T( \sigma'_0 \KK_1,\KK_2)
= \delta_S( \sigma'_0 \KK_1,\KK_2)
$$
hence
$$
\inf_{\sigma \in \mathfrak{S}(T)} \delta_T(\sigma_0 \KK_1,\KK_2) \geqslant
\delta_T(\sigma_0 \KK_1,\KK_2) - \eps \geqslant \delta_S( \sigma'_0 \KK_1,\KK_2) - \eps \geqslant \inf_{\sigma \in \mathfrak{S}(S)} \delta_S(\sigma \KK_1^S, \KK_2^S) - \eps
$$
and since this is true for all $\eps > 0$ this proves the claim.
\end{proof}
Because of this proposition, there is no drawback in removing the set $S$ from the
notation $d_S(K_1,K_2)$, and have it understood as before, also for isomorphism classes of simplicial complexes.

\begin{corollary} \label{cor:findKisoQ} If $K_1,K_2$ are isomorphism classes of finite simplicial complexes, then $d(K_1,K_2) \in \Q$.
\end{corollary}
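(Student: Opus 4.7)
The plan is to express the infimum defining $d(K_1,K_2)$ as a minimum over a finite set whose elements are all rational by Proposition \ref{prop:dKKQ}. I would fix finite representatives $\mathcal{K}_1 \in K_1$ and $\mathcal{K}_2 \in K_2$ with finite vertex sets $V_1 = \bigcup \mathcal{K}_1$ and $V_2 = \bigcup \mathcal{K}_2$, and, invoking Proposition \ref{prop:indepST}, enlarge $S$ if necessary so that it is infinite; this gives room for the permutations constructed below without changing the distance.

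The first ingredient is a basic equivariance: for any $\tau \in \mathfrak{S}(S)$, post-composition $f \mapsto \tau \circ f$ is an isometry of $L(\Omega,S)$ sending $L(\mathcal{A})$ onto $L(\tau(\mathcal{A}))$ for every $\mathcal{A}$, hence $d(\tau(\mathcal{A}),\tau(\mathcal{B})) = d(\mathcal{A},\mathcal{B})$. Writing $H = \{\tau \in \mathfrak{S}(S) : \tau|_{V_1}=\mathrm{Id}\}$ for the pointwise stabilizer of $V_1$ (which fixes $\mathcal{K}_1$), we obtain $d(\mathcal{K}_1,\sigma(\mathcal{K}_2)) = d(\mathcal{K}_1, \tau(\sigma(\mathcal{K}_2)))$ for every $\tau \in H$ and every $\sigma \in \mathfrak{S}(S)$, so this distance depends only on the $H$-orbit of the complex $\sigma(\mathcal{K}_2)$.

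The second ingredient is the classification of these $H$-orbits. To each $\sigma \in \mathfrak{S}(S)$ I attach the partial function $\iota_\sigma : V_2 \to V_1$ defined on $\sigma^{-1}(V_1)\cap V_2$ by $v \mapsto \sigma(v)$. Any action by $H$ leaves $\iota_\sigma$ unchanged. Conversely, if $\iota_\sigma = \iota_{\sigma'}$, then $\sigma$ and $\sigma'$ coincide on $\sigma^{-1}(V_1)\cap V_2$, so $\sigma' \circ \sigma^{-1}$ restricts to a bijection $\sigma(V_2) \to \sigma'(V_2)$ fixing $V_1 \cap \sigma(V_2)$ pointwise; since $S\setminus V_1$ is infinite this bijection extends to an element $\tau \in H$, and then $\tau(\sigma(\mathcal{K}_2)) = \sigma'(\mathcal{K}_2)$. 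Hence $\sigma$-orbits correspond bijectively to partial injections $V_2 \to V_1$, of which there are only finitely many.

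We therefore have
\[
d(K_1,K_2) = \inf_{\sigma \in \mathfrak{S}(S)} d(\mathcal{K}_1,\sigma(\mathcal{K}_2)) = \min_{\iota} d(\mathcal{K}_1, \sigma_\iota(\mathcal{K}_2)),
\]
a minimum over a finite index set, where $\sigma_\iota$ denotes any lift of $\iota$ to $\mathfrak{S}(S)$. Each term on the right is rational by Proposition \ref{prop:dKKQ}, applied to the two finite simplicial complexes $\mathcal{K}_1$ and $\sigma_\iota(\mathcal{K}_2)$, whence $d(K_1,K_2) \in \Q$. The main obstacle in this plan is the orbit identification in the third paragraph: one must verify that the partial bijection $\sigma' \circ \sigma^{-1}$ does extend to an element of $H$, which is exactly what forces the use of an infinite ambient $S$ and thereby the invocation of Proposition \ref{prop:indepST} at the outset.
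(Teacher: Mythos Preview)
Your argument is correct, but it takes a noticeably longer route than the paper's. The paper invokes Proposition~\ref{prop:indepST} in the \emph{opposite} direction: instead of enlarging $S$ to be infinite, it shrinks $S$ down to the finite set $\bigl(\bigcup\mathcal{K}_1\bigr)\cup\bigl(\bigcup\mathcal{K}_2\bigr)$. Over this $S$ the group $\mathfrak{S}(S)$ is itself finite, so the infimum is automatically a minimum over finitely many terms, each rational by Proposition~\ref{prop:dKKQ}. No orbit analysis is needed.

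Your approach, by contrast, keeps $S$ infinite and then reduces the infinite symmetric group to a finite problem by classifying the $H$-orbits of $\sigma(\mathcal{K}_2)$ via the partial injections $\iota_\sigma:V_2\to V_1$. This is sound and arguably clarifies \emph{why} only finitely many values can occur (the only data that matters is how $\sigma$ overlaps $V_2$ with $V_1$), but it forces you to prove the orbit--injection correspondence and to check the extension of $\sigma'\circ\sigma^{-1}$ to an element of $H$, which in turn is what drives you to enlarge $S$ in the first place. The paper's version sidesteps all of this: once $S$ is finite, finiteness of $\mathfrak{S}(S)$ is immediate.
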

\begin{proof} Let $\mathcal{K}_1,\KK_2$ be representatives of $K_1,K_2$.
Because of the above proposition we can assume $S = (\bigcup \KK_1)\cup (\bigcup \KK_2)$. But since $S$ is finite $\mathfrak{S}(S)$ is also finite and
$$
\inf_{\sigma \in \mathfrak{S}(S) } d(\sigma \KK_1,\KK_2)
= 
\min_{\sigma \in \mathfrak{S}(S) } d(\sigma \KK_1,\KK_2)
$$
belongs to $\Q$ by proposition \ref{prop:dKKQ}.
\end{proof}
\subsection{Other properties and special computations}

\begin{proposition} \label{prop:findimdd}
If $K_1$ and $K_2$ are finite dimensional, then they admit
representatives $\KK_1,\KK_2$ such that $d(K_1,K_2) = d(\KK_1,\KK_2)$.
\end{proposition}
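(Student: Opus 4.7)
The plan is to reduce everything to Proposition \ref{prop:finSn}, which asserts that the distance function takes only finitely many values when restricted to $\mathcal{S}_n \times \mathcal{S}_m$. Since $K_1, K_2$ are finite dimensional, any representatives $\KK_1 \in K_1$ and $\KK_2 \in K_2$ satisfy $\KK_i \in \mathcal{S}_{n_i}$ for some integers $n_1, n_2$, and this property is preserved under permutation of the vertex set $S$: if $\sigma \in \mathfrak{S}(S)$, then $\sigma(\KK_1) \in \mathcal{S}_{n_1}$ as well, because the map $F \mapsto \sigma(F)$ preserves cardinalities of faces.

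Concretely, I would first fix once and for all arbitrary representatives $\KK_1 \in K_1$ and $\KK_2 \in K_2$ living over a common vertex set $S$ (by Proposition \ref{prop:indepST} we may enlarge $S$ without affecting the distance, so this is harmless). By definition of the distance on isomorphism classes,
$$
d(K_1,K_2) = \inf_{\sigma \in \mathfrak{S}(S)} d(\sigma(\KK_1),\KK_2).
$$
All the terms $d(\sigma(\KK_1),\KK_2)$ lie in the image of the restricted distance function $d : \mathcal{S}_{n_1}\times \mathcal{S}_{n_2} \to [0,1]$, which by Proposition \ref{prop:finSn} is a finite subset of $[0,1]$.

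An infimum over a finite subset of $[0,1]$ is a minimum, so there exists $\sigma_0 \in \mathfrak{S}(S)$ such that
$$
d(\sigma_0(\KK_1),\KK_2) = \inf_{\sigma \in \mathfrak{S}(S)} d(\sigma(\KK_1),\KK_2) = d(K_1,K_2).
$$
Setting $\KK_1' = \sigma_0(\KK_1)$ (still a representative of $K_1$, since $\sigma_0$ is an isomorphism of simplicial complexes) and keeping $\KK_2$ as is, we obtain the desired pair.

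There is no serious obstacle here, as the whole argument is an immediate consequence of finiteness of the image of $d$ on $\mathcal{S}_{n_1}\times \mathcal{S}_{n_2}$; the only subtle point worth stating explicitly is that the set of ``achievable'' values $\{d(\sigma(\KK_1),\KK_2)\mid \sigma \in \mathfrak{S}(S)\}$ is finite even though $\mathfrak{S}(S)$ may be enormous, which is what lets the infimum be attained.
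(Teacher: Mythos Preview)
Your proof is correct and follows exactly the same approach as the paper: both reduce to Proposition \ref{prop:finSn} and observe that the infimum defining $d(K_1,K_2)$ ranges over a finite set of values, hence is attained. Your write-up simply spells out the details more explicitly than the paper's one-line justification.
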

\begin{proof}
This is an immediate consequence of the fact that $d$ has finite image over
$\mathcal{S}_n \times \mathcal{S}_m$ with $n = \dim K_1$ and $m = \dim K_2$, by
proposition \ref{prop:finSn}.
\end{proof}

\begin{remark}
We do not know whether the conclusion holds in full generality, that is if there
exists simplicial complexes $\KK_1,\KK_2$ with isomorphism classes $K_1,K_2$ such that
$$
\forall \sigma \in \mathfrak{S}(S) \ d(\sigma(\KK_1),\KK_2) > d(K_1,K_2)
$$
\end{remark}

\begin{proposition} \label{cor:ineqsquela}
If the $(n+1)$-squeletons of $K_1$ and $K_2$ differ, then $d(K_1,K_2) \geq \frac{1}{n+2}$
\end{proposition}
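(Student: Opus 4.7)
The plan is to reduce the statement to Corollary \ref{cor:ineqsquel} for fixed representatives by exploiting the fact that permutations preserve isomorphism classes of squeletons.

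First I would fix any representatives $\KK_1 \in K_1$, $\KK_2 \in K_2$, and an arbitrary $\sigma \in \mathfrak{S}(S)$. The crucial observation is that the $(n+1)$-squeleton is a functor with respect to bijections of vertex sets: $\pi_{n+1}(\sigma(\KK_1)) = \sigma(\pi_{n+1}(\KK_1))$, which lies in the same isomorphism class as $\pi_{n+1}(\KK_1)$, hence in the isomorphism class of the $(n+1)$-squeleton of $K_1$. Since by hypothesis the isomorphism classes of the $(n+1)$-squeletons of $K_1$ and $K_2$ differ, we conclude that $\pi_{n+1}(\sigma(\KK_1))$ and $\pi_{n+1}(\KK_2)$ are not isomorphic, and in particular are not equal as subsets of $\PF^*(S)$.

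Next I would apply corollary \ref{cor:ineqsquel} to the pair $\sigma(\KK_1)$ and $\KK_2$. It produces an integer $N = N(\sigma(\KK_1),\KK_2)$ such that $\pi_N(\sigma(\KK_1)) = \pi_N(\KK_2)$ and $\pi_{N+1}(\sigma(\KK_1)) \neq \pi_{N+1}(\KK_2)$. Since $N+1$ is by definition the smallest index at which the projections disagree, and we have shown disagreement already at index $n+1$, we get $N+1 \leq n+1$, that is $N \leq n$. Corollary \ref{cor:ineqsquel} then yields
$$
d(\sigma(\KK_1),\KK_2) \geq \frac{1}{N+2} \geq \frac{1}{n+2}.
$$

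Finally, since $\sigma \in \mathfrak{S}(S)$ and the representatives $\KK_1, \KK_2$ were arbitrary, taking the infimum gives $d(K_1,K_2) = \inf_{\sigma} d(\sigma(\KK_1),\KK_2) \geq \frac{1}{n+2}$, as desired. I do not anticipate any genuine obstacle here: the only subtlety is the verification that differing isomorphism classes of squeletons forces set-theoretic inequality for every translate by $\sigma$, which follows from the fact that $\sigma$ permutes simplices of each fixed cardinality and so preserves the isomorphism type of $\pi_{n+1}$.
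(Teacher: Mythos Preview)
Your proof is correct and follows essentially the same approach as the paper's: apply corollary~\ref{cor:ineqsquel} to every pair of representatives (equivalently, to $\sigma(\KK_1)$ and $\KK_2$ for each $\sigma$) and pass to the infimum. The paper's version is terser, leaving implicit both the compatibility $\pi_{n+1}(\sigma(\KK_1)) = \sigma(\pi_{n+1}(\KK_1))$ and the bound $N \leq n$ that you spell out explicitly.
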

\begin{proof}
Let $\mathcal{K}_1,\mathcal{K}_2$ be arbitrary representatives of $K_1,K_2$, respectively. By corollary \ref{cor:ineqsquel} we have $d(\KK_1,\KK_2) \geq \frac{1}{n+2}$. The conclusion follows.
\end{proof}

We now compute the distance for a few basic examples. 

\begin{proposition}{\ } \label{prop:excompiso}
\begin{enumerate}
\item If the cardinality of the vertices of $K_1$ and $K_2$ are not the same, then
$d(K_1,K_2) = 1$.
\item Let $X$ a finite set of cardinality $q \geq 1$, and $1 \leq p \leq q$ an integer. Let $\KK_1 = \mathcal{P}_{\leq q-p}(X)$ ,$\KK_2 = \PF^*(X)$
and $K_1,K_2$ the corresponding isomorphism classes. Then $d(K_1,K_2) = p/q$.
\item Let $(S_n)$ be a collection of disjoint finite sets with $q_n = |S_n|$, and $(p_n)$ an integer sequence with $1 \leq p_n \leq q_n$. Let $\mathcal{K}_1 = \bigsqcup_n \mathcal{P}^*_{\leq q_n-p_n}(S_n)$,   $\mathcal{K}_2 = \bigsqcup_n \PF^*(S_n)$,
and $K_1,K_2$ the corresponding isomorphism classes. Assume that $\alpha = \sup_n p_n/q_n$. If $\alpha \leq 1/2$ and $q_n$ is (strictly) increasing, then $d(K_1,K_2) = \alpha$. 
\end{enumerate}
\end{proposition}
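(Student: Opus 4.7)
Parts (1) and (2) are quick consequences of Corollary~\ref{cor:dist1vertex} together with the invariance of $\mathcal{P}^*_{\leq k}(X)$ under permutations of $X$. For (1), any $\sigma\in\mathfrak{S}(S)$ preserves cardinality, so the vertex sets of $\sigma(\KK_1)$ and $\KK_2$ still have different sizes and Corollary~\ref{cor:dist1vertex} gives $d(\sigma(\KK_1),\KK_2)=1$. For (2), example~(3) of Section~\ref{sect:specialcases} yields $d(\KK_1,\KK_2)=p/q$, hence $d(K_1,K_2)\leq p/q$; conversely, if $\sigma(X)=X$ then $\sigma(\KK_1)=\mathcal{P}^*_{\leq q-p}(\sigma(X))=\KK_1$ by invariance, while if $\sigma(X)\neq X$ Corollary~\ref{cor:dist1vertex} again gives distance $1$, so the infimum is $p/q$.

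Part~(3) is the substantive statement. The upper bound $d(K_1,K_2)\leq\alpha$ is example~(4) of Section~\ref{sect:specialcases} applied with $\sigma=\Id$. For the lower bound I fix $\sigma\in\mathfrak{S}(S)$ and introduce $a_{m,n}=|S_m\cap\sigma^{-1}(S_n)|$, a nonnegative integer matrix with marginals $\sum_m a_{m,n}=q_n$ and $\sum_n a_{m,n}=q_m$. I split into two cases according to whether $\sigma$ ``respects the block decomposition''.

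Suppose first that for every $n$ a single index $m_n$ has $a_{m_n,n}>0$; necessarily $a_{m_n,n}=q_n$. Setting $N_m=\{n : m_n=m\}$ gives the exact partition identity $q_m=\sum_{n\in N_m}q_n$, and each $N_m$ is non-empty because $|\sigma(S_m)|=q_m>0$. I claim that $N_m=\{m\}$ for every $m$, which I prove by strong induction on $m$ using strict monotonicity of $q_n$: once $N_k=\{k\}$ for $k<m$, the indices $1,\dots,m-1$ are no longer available as preimages of $m$, and any $n>m$ satisfies $q_n>q_m$ so cannot appear in a sum equal to $q_m$; hence $N_m\subseteq\{m\}$, and non-emptiness gives equality. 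Therefore $\sigma(S_n)=S_n$ for all $n$, and since $\mathcal{P}^*_{\leq q_n-p_n}(S_n)$ is invariant under permutations of $S_n$, one gets $\sigma(\KK_1)=\KK_1$ and $d(\sigma(\KK_1),\KK_2)=\alpha$.

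Otherwise there exists $n_0$ with $R^{(n_0)}:=\#\{m : a_{m,n_0}>0\}\geq 2$. I test with $F=S_{n_0}\in\KK_2$: the simplices of $\sigma(\KK_1)$ contained in $S_{n_0}$ form a vertex-disjoint union $\bigsqcup_{m:a_{m,n_0}>0}\mathcal{P}^*_{\leq q_m-p_m}(\sigma(S_m)\cap S_{n_0})$ over a partition of $S_{n_0}$. The connected-components inverse-sum formula from the preceding subsection (applied piece by piece, with the usual further splitting into singletons when $q_m-p_m=1$) gives
$$
D(S_{n_0},\sigma(\KK_1))=\left(\sum_{m:a_{m,n_0}>0}\max\!\Big(1,\frac{a_{m,n_0}}{q_m-p_m}\Big)\right)^{-1}.
$$
Each summand is $\geq 1$ and there are at least $R^{(n_0)}\geq 2$ of them, so the sum is $\geq 2$ and $d(S_{n_0},\sigma(\KK_1))\geq 1/2\geq\alpha$, giving $d(\sigma(\KK_1),\KK_2)\geq\alpha$. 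The two hypotheses of part~(3) enter exactly once each: strict monotonicity of $q_n$ collapses the first case to the identity, and $\alpha\leq 1/2$ is what makes the crude two-component estimate $\geq 1/2$ sufficient to reach $\alpha$.
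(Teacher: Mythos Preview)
Your arguments for (1) and (2) coincide with the paper's. For (3) your proof is correct but proceeds along a genuinely different route.

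The paper bounds $\delta(\sigma\KK_1,\KK_2)$: assuming $d(\sigma\KK_1,\KK_2)<\alpha\leq 1/2$, it observes that whenever $a,b\in S_n$ land in different blocks under $\sigma$, the pair $\{\sigma(a),\sigma(b)\}$ is an edge of $\sigma\KK_1$ absent from $\KK_2$, so Corollary~\ref{cor:ineqsquel} forces $d\geq 1/2$, a contradiction. Thus $\sigma$ sends each $S_n$ into a single block, and strict monotonicity of $q_n$ finishes. You instead bound $\delta(\KK_2,\sigma\KK_1)$: when some $S_{n_0}$ receives contributions from at least two blocks, you test with $F=S_{n_0}\in\KK_2$ and invoke the connected-components inverse-sum formula for $D(F,\cdot)$ to get $D\leq 1/2$, hence $d\geq 1/2\geq\alpha$.

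The paper's path is lighter machinery (a one-line skeleton estimate), but it tacitly needs $\{a,b\}\in\KK_1$, i.e.\ $q_n-p_n\geq 2$; under $\alpha\leq 1/2$ this fails only for the single possible block with $q_n=2,\,p_n=1$, a corner case the paper glosses over. Your component count handles that case uniformly, at the cost of appealing to the proposition after Lemma~\ref{lem:opticonnexe}. In the block-preserving case both proofs reduce to the same partition identity $q_m=\sum_{n\in N_m}q_n$ and the same monotonicity argument; your strong induction just spells out what the paper compresses into one sentence.

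One small omission: your marginal identities $\sum_m a_{m,n}=q_n$ and $\sum_n a_{m,n}=q_m$ presuppose that $\sigma$ maps $\bigcup_n S_n$ onto itself. You should note, as in part~(1), that otherwise the vertex sets of $\sigma\KK_1$ and $\KK_2$ differ and Corollary~\ref{cor:dist1vertex} already gives distance $1\geq\alpha$.
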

\begin{proof}
(1) is an immediate consequence of corollary \ref{cor:dist1vertex}.
We prove (2). We know from section \ref{sect:specialcases} that $d(\KK_1,\KK_2) = p/q$. If $p = q$ then $K_1 = K_2$ and the claim is clear. Let us assume otherwise. Let $\sigma \in \mathfrak{S}(S)$ and assume that $d(\sigma \KK_1,\KK_2) < d(\KK_1,\KK_2)$. Then $\sigma \KK_1$ and $\KK_2$ need to have the same vertex set, hence $\sigma(X) = X$. But then $\sigma(\KK_1) = \KK_1$ contradicting $d(\sigma \KK_1,\KK_2) < d(\KK_1,\KK_2)$.
Thus $d(K_1,K_2) = d(\KK_1,\KK_2) = p/q$.
We now prove (3).
We prove that, under the assumptions of (3), we have $d(\KK_1,\KK_2) = d(K_1,K_2)$. For otherwise, there would exists $\sigma \in \mathfrak{S}(S)$ such that
$d(\sigma \KK_1 , \KK_2) < d(\KK_1,\KK_2) \leq 1/2$. Let us consider such a $\sigma$. For any $a \neq b$ with $a,b \in S_n$ and $\sigma(a) \in S_m$, we claim
that $\sigma(b) \in S_m$. For otherwise, the 1-squeletton of $\sigma\KK_1$ would contain $\{ \sigma(a),\sigma(b) \}$ which does not belong to $\KK_2$. Thus
$d(\sigma \KK_1,\KK_2) \geq 1/2$ by corollary \ref{cor:ineqsquel}, and this contradicts $d(\sigma \KK_1,\KK_2) < d(\KK_1,\KK_2) \leq 1/2$. It follows from this
that $\sigma$ permutes the $S_n$'s. But since $q_n = |S_n|$ is strictly increasing
and $\sigma$ induces a bijection $S_n \to \sigma(S_n)$ this implies that
$\sigma(S_n) = S_n$ for all $n$. But then $\sigma(\KK_1) = \KK_1$,
contradicting $d(\sigma \KK_1 , \KK_2) < d(\KK_1,\KK_2)$. This contradiction proves the claim.
\end{proof}

\begin{remark}
By the above proposition we know that the diameter of $\mathbf{S}(X)$
is $1$, and that if $X$ is infinite then the distance can take the value of any $\alpha \in [0,1/2]$ and any $\alpha \in \Q \cap [0,1]$. It would be interesting to have a construction with distances $\alpha \in ]1/2,1[ \setminus \Q$.
\end{remark}

We denote $\mathbf{S}(X)$ the space of isomorphism classes of simplicial
complexes over the vertex set $X$, and in particular $\mathbf{S}(n)$ the space
$\mathbf{S}(X)$ for $X$ equal to the ordinal $n$.

The spaces $\mathbf{S}(1)$ has 1 element, $\mathbf{S}(2)$ has 2 elements at distance $1/2$.

\begin{center}
\resizebox{5cm}{!}{
$ \displaystyle
\begin{array}{|c||m{1 cm}|c|c|c|c|}
\cline{2-6}
\multicolumn{1}{c||}{} & 
\begin{tikzpicture}[xscale=0.6,yscale=0.6]
\fill (0,0) circle (0.05);
\fill (1,0) circle (0.05);
\fill (0.5,0.866025) circle (0.05);
\end{tikzpicture} 
& \begin{tikzpicture}[xscale=0.6,yscale=0.6]
\fill (0,0) circle (0.05);
\fill (1,0) circle (0.05);
\fill (0.5,0.866025) circle (0.05);
\draw (0,0) -- (1,0);
\end{tikzpicture}
  & \begin{tikzpicture}[xscale=0.6,yscale=0.6]
\fill (0,0) circle (0.05);
\fill (1,0) circle (0.05);
\fill (0.5,0.866025) circle (0.05);
\draw (0,0) -- (0.5,0.866025) -- (1,0);
\end{tikzpicture} 
  & \begin{array}{c} \ \\
\begin{tikzpicture}[xscale=0.6,yscale=0.6]
\fill (0,0) circle (0.05);
\fill (1,0) circle (0.05);
\fill (0.5,0.866025) circle (0.05);
\draw (0,0) -- (1,0) -- (0.5,0.866025) -- (0,0);
\end{tikzpicture} \\ \ 
\end{array} 
&
\begin{tikzpicture}[xscale=0.6,yscale=0.6]
\fill (0,0) circle (0.05);
\fill (1,0) circle (0.05);
\fill (0.5,0.866025) circle (0.05);
\fill (0,0) -- (1,0) -- (0.5,0.866025);
\end{tikzpicture} 
\\
 \hline
 \hline
\begin{array}{c} \ \\ \begin{tikzpicture}[xscale=0.6,yscale=0.6]
\fill (0,0) circle (0.05);
\fill (1,0) circle (0.05);
\fill (0.5,0.866025) circle (0.05);
\end{tikzpicture} \end{array}
& 0 & \frac{1}{2} & \frac{1}{2} & \frac{1}{2} & \frac{2}{3}  \\ 
\hline
\begin{array}{c} \ \\ \begin{tikzpicture}[xscale=0.6,yscale=0.6]
\fill (0,0) circle (0.05);
\fill (1,0) circle (0.05);
\fill (0.5,0.866025) circle (0.05);
\draw (0,0) -- (1,0);
\end{tikzpicture} \end{array}& \ & 0 & \frac{1}{2} & \frac{1}{2} & \frac{1}{2}  \\
\hline
\begin{array}{c} \ \\
\begin{tikzpicture}[xscale=0.6,yscale=0.6]
\fill (0,0) circle (0.05);
\fill (1,0) circle (0.05);
\fill (0.5,0.866025) circle (0.05);
\draw (0,0) -- (0.5,0.866025) -- (1,0);
\end{tikzpicture} \end{array}& \ & \ & 0 & \frac{1}{2} & \frac{1}{2}  \\
\hline
\begin{array}{c} \ \\
\begin{tikzpicture}[xscale=0.6,yscale=0.6]
\fill (0,0) circle (0.05);
\fill (1,0) circle (0.05);
\fill (0.5,0.866025) circle (0.05);
\draw (0,0) -- (1,0) -- (0.5,0.866025) -- (0,0);
\end{tikzpicture}\end{array} & \ & \ & \ & 0 & \frac{1}{3 }\\
\hline
\begin{array}{c} \ \\
\begin{tikzpicture}[xscale=0.6,yscale=0.6]
\fill (0,0) circle (0.05);
\fill (1,0) circle (0.05);
\fill (0.5,0.866025) circle (0.05);
\fill (0,0) -- (1,0) -- (0.5,0.866025);
\end{tikzpicture}\end{array} & \ & \ & \ & \ & 0 \\
\hline
\end{array}
$
}
\end{center}
In table \ref{tab:simp4}, describing the 20-elements metric space $\mathbf{S}(4)$, the 2-dimensional faces are depicted in red while the (single possible) 3-face is depicted in blue.

The 180-elements metric space $\mathbf{S}(5)$ can similarly be computed. The results can be found at \url{http://www.lamfa.u-picardie.fr/marin/d5haus-en.html}. The distance function on this space
takes for values all the values $0 \leq a/b < 1$ with $b \leq 5$, as well as 
the values $\{ \frac{2}{7},\frac{3}{8}, \frac{3}{7}, \frac{4}{9},\frac{5}{9},\frac{4}{7},\frac{5}{8},\frac{5}{7} \}$.

\begin{table}
\resizebox{14cm}{!}{
$\displaystyle
\begin{array}{|c|c|c|c|c|c|c|c|c|c|c|c|c|c|c|c|c|c|c|c|c|}
\cline{2-21}
\multicolumn{1}{c|}{} & 
 \begin{tikzpicture}
\draw (-0.9659258263,-0.1830127019) node (1)   {$\bullet$};
\draw (0.2588190451,0.6830127019) node (2)   {$\bullet$};
\draw (0.7071067812,-0.5) node (3)   {$\bullet$};
\draw (0,-1.2247448714) node (4)   {$\bullet$};
\draw (1) -- (2);
\draw (1) -- (3);
\draw (1) -- (4);
\draw (2) -- (3);
\draw (2) -- (4);
\draw (3) -- (4);
\end{tikzpicture}
&
 \begin{tikzpicture}
\draw (-0.9659258263,-0.1830127019) node (1)   {$\bullet$};
\draw (0.2588190451,0.6830127019) node (2)   {$\bullet$};
\draw (0.7071067812,-0.5) node (3)   {$\bullet$};
\draw (0,-1.2247448714) node (4)   {$\bullet$};
\draw (1) -- (2);
\draw (1) -- (3);
\draw (1) -- (4);
\draw (2) -- (3);
\draw (2) -- (4);
\draw (3) -- (4);
\draw[ultra thick] (-0.9659258263,-0.1830127019) -- (0.2588190451,0.6830127019);
\end{tikzpicture} &
 \begin{tikzpicture}
\draw (-0.9659258263,-0.1830127019) node (1)   {$\bullet$};
\draw (0.2588190451,0.6830127019) node (2)   {$\bullet$};
\draw (0.7071067812,-0.5) node (3)   {$\bullet$};
\draw (0,-1.2247448714) node (4)   {$\bullet$};
\draw (1) -- (2);
\draw (1) -- (3);
\draw (1) -- (4);
\draw (2) -- (3);
\draw (2) -- (4);
\draw (3) -- (4);
\draw[ultra thick] (-0.9659258263,-0.1830127019) -- (0.2588190451,0.6830127019);
\draw[ultra thick] (0.2588190451,0.6830127019) -- (0.7071067812,-0.5);
\end{tikzpicture} & 
\begin{tikzpicture}
\draw (-0.9659258263,-0.1830127019) node (1)   {$\bullet$};
\draw (0.2588190451,0.6830127019) node (2)   {$\bullet$};
\draw (0.7071067812,-0.5) node (3)   {$\bullet$};
\draw (0,-1.2247448714) node (4)   {$\bullet$};
\draw (1) -- (2);
\draw (1) -- (3);
\draw (1) -- (4);
\draw (2) -- (3);
\draw (2) -- (4);
\draw (3) -- (4);
\draw[ultra thick] (-0.9659258263,-0.1830127019) -- (0.2588190451,0.6830127019);
\draw[ultra thick] (0.7071067812,-0.5) -- (0,-1.2247448714);
\end{tikzpicture} 
& 
 \begin{tikzpicture}
\draw (-0.9659258263,-0.1830127019) node (1)   {$\bullet$};
\draw (0.2588190451,0.6830127019) node (2)   {$\bullet$};
\draw (0.7071067812,-0.5) node (3)   {$\bullet$};
\draw (0,-1.2247448714) node (4)   {$\bullet$};
\draw (1) -- (2);
\draw (1) -- (3);
\draw (1) -- (4);
\draw (2) -- (3);
\draw (2) -- (4);
\draw (3) -- (4);
\draw[ultra thick] (-0.9659258263,-0.1830127019) -- (0.2588190451,0.6830127019);
\draw[ultra thick] (-0.9659258263,-0.1830127019) -- (0,-1.2247448714);
\draw[ultra thick] (0.2588190451,0.6830127019) -- (0,-1.2247448714);
\end{tikzpicture} & 
\begin{tikzpicture}
\draw (-0.9659258263,-0.1830127019) node (1)   {$\bullet$};
\draw (0.2588190451,0.6830127019) node (2)   {$\bullet$};
\draw (0.7071067812,-0.5) node (3)   {$\bullet$};
\draw (0,-1.2247448714) node (4)   {$\bullet$};
\draw (1) -- (2);
\draw (1) -- (3);
\draw (1) -- (4);
\draw (2) -- (3);
\draw (2) -- (4);
\draw (3) -- (4);
\draw[ultra thick] (-0.9659258263,-0.1830127019) -- (0.2588190451,0.6830127019);
\draw[ultra thick] (0.2588190451,0.6830127019) -- (0.7071067812,-0.5);
\draw[ultra thick] (0.7071067812,-0.5) -- (0,-1.2247448714);
\end{tikzpicture} & 
\begin{tikzpicture}
\draw (-0.9659258263,-0.1830127019) node (1)   {$\bullet$};
\draw (0.2588190451,0.6830127019) node (2)   {$\bullet$};
\draw (0.7071067812,-0.5) node (3)   {$\bullet$};
\draw (0,-1.2247448714) node (4)   {$\bullet$};
\draw (1) -- (2);
\draw (1) -- (3);
\draw (1) -- (4);
\draw (2) -- (3);
\draw (2) -- (4);
\draw (3) -- (4);
\draw[ultra thick] (-0.9659258263,-0.1830127019) -- (0,-1.2247448714);
\draw[ultra thick] (0.2588190451,0.6830127019) -- (0,-1.2247448714);
\draw[ultra thick] (0.7071067812,-0.5) -- (0,-1.2247448714);
\end{tikzpicture}
&
 \begin{tikzpicture}
\draw (-0.9659258263,-0.1830127019) node (1)   {$\bullet$};
\draw (0.2588190451,0.6830127019) node (2)   {$\bullet$};
\draw (0.7071067812,-0.5) node (3)   {$\bullet$};
\draw (0,-1.2247448714) node (4)   {$\bullet$};
\draw (1) -- (2);
\draw (1) -- (3);
\draw (1) -- (4);
\draw (2) -- (3);
\draw (2) -- (4);
\draw (3) -- (4);
\draw[ultra thick] (-0.9659258263,-0.1830127019) -- (0.2588190451,0.6830127019);
\draw[ultra thick] (-0.9659258263,-0.1830127019) -- (0,-1.2247448714);
\draw[ultra thick] (0.2588190451,0.6830127019) -- (0.7071067812,-0.5);
\draw[ultra thick] (0.2588190451,0.6830127019) -- (0,-1.2247448714);
\end{tikzpicture} & 
 \begin{tikzpicture}
\draw (-0.9659258263,-0.1830127019) node (1)   {$\bullet$};
\draw (0.2588190451,0.6830127019) node (2)   {$\bullet$};
\draw (0.7071067812,-0.5) node (3)   {$\bullet$};
\draw (0,-1.2247448714) node (4)   {$\bullet$};
\draw (1) -- (2);
\draw (1) -- (3);
\draw (1) -- (4);
\draw (2) -- (3);
\draw (2) -- (4);
\draw (3) -- (4);
\draw[ultra thick] (-0.9659258263,-0.1830127019) -- (0.7071067812,-0.5);
\draw[ultra thick] (-0.9659258263,-0.1830127019) -- (0,-1.2247448714);
\draw[ultra thick] (0.2588190451,0.6830127019) -- (0.7071067812,-0.5);
\draw[ultra thick] (0.2588190451,0.6830127019) -- (0,-1.2247448714);
\end{tikzpicture} & 
 \begin{tikzpicture}
\draw (-0.9659258263,-0.1830127019) node (1)   {$\bullet$};
\draw (0.2588190451,0.6830127019) node (2)   {$\bullet$};
\draw (0.7071067812,-0.5) node (3)   {$\bullet$};
\draw (0,-1.2247448714) node (4)   {$\bullet$};
\draw (1) -- (2);
\draw (1) -- (3);
\draw (1) -- (4);
\draw (2) -- (3);
\draw (2) -- (4);
\draw (3) -- (4);
\draw[ultra thick] (-0.9659258263,-0.1830127019) -- (0.7071067812,-0.5);
\draw[ultra thick] (-0.9659258263,-0.1830127019) -- (0,-1.2247448714);
\draw[ultra thick] (0.2588190451,0.6830127019) -- (0.7071067812,-0.5);
\draw[ultra thick] (0.2588190451,0.6830127019) -- (0,-1.2247448714);
\draw[ultra thick] (0.7071067812,-0.5) -- (0,-1.2247448714);
\end{tikzpicture}
& 
 \begin{tikzpicture}
\draw (-0.9659258263,-0.1830127019) node (1)   {$\bullet$};
\draw (0.2588190451,0.6830127019) node (2)   {$\bullet$};
\draw (0.7071067812,-0.5) node (3)   {$\bullet$};
\draw (0,-1.2247448714) node (4)   {$\bullet$};
\draw (1) -- (2);
\draw (1) -- (3);
\draw (1) -- (4);
\draw (2) -- (3);
\draw (2) -- (4);
\draw (3) -- (4);
\draw[ultra thick] (-0.9659258263,-0.1830127019) -- (0.2588190451,0.6830127019);
\draw[ultra thick] (-0.9659258263,-0.1830127019) -- (0.7071067812,-0.5);
\draw[ultra thick] (-0.9659258263,-0.1830127019) -- (0,-1.2247448714);
\draw[ultra thick] (0.2588190451,0.6830127019) -- (0.7071067812,-0.5);
\draw[ultra thick] (0.2588190451,0.6830127019) -- (0,-1.2247448714);
\draw[ultra thick] (0.7071067812,-0.5) -- (0,-1.2247448714);
\end{tikzpicture}   
  & 
 \begin{tikzpicture}
\fill[color=red] (-0.9659258263,-0.1830127019) -- (0.7071067812,-0.5) -- (0,-1.2247448714) -- cycle;
\draw (-0.9659258263,-0.1830127019) node (1)   {$\bullet$};
\draw (0.2588190451,0.6830127019) node (2)   {$\bullet$};
\draw (0.7071067812,-0.5) node (3)   {$\bullet$};
\draw (0,-1.2247448714) node (4)   {$\bullet$};
\draw (1) -- (2);
\draw (1) -- (3);
\draw (1) -- (4);
\draw (2) -- (3);
\draw (2) -- (4);
\draw (3) -- (4);
\draw[ultra thick] (-0.9659258263,-0.1830127019) -- (0.7071067812,-0.5);
\draw[ultra thick] (-0.9659258263,-0.1830127019) -- (0,-1.2247448714);
\draw[ultra thick] (0.7071067812,-0.5) -- (0,-1.2247448714);
\end{tikzpicture} &
   \begin{tikzpicture}
\fill[color=red] (-0.9659258263,-0.1830127019) -- (0.7071067812,-0.5) -- (0,-1.2247448714) -- cycle;
\draw (-0.9659258263,-0.1830127019) node (1)   {$\bullet$};
\draw (0.2588190451,0.6830127019) node (2)   {$\bullet$};
\draw (0.7071067812,-0.5) node (3)   {$\bullet$};
\draw (0,-1.2247448714) node (4)   {$\bullet$};
\draw (1) -- (2);
\draw (1) -- (3);
\draw (1) -- (4);
\draw (2) -- (3);
\draw (2) -- (4);
\draw (3) -- (4);
\draw[ultra thick] (-0.9659258263,-0.1830127019) -- (0.2588190451,0.6830127019);
\draw[ultra thick] (-0.9659258263,-0.1830127019) -- (0.7071067812,-0.5);
\draw[ultra thick] (-0.9659258263,-0.1830127019) -- (0,-1.2247448714);
\draw[ultra thick] (0.7071067812,-0.5) -- (0,-1.2247448714);
\end{tikzpicture} & 
  \begin{tikzpicture}
\fill[color=red] (-0.9659258263,-0.1830127019) -- (0.7071067812,-0.5) -- (0,-1.2247448714) -- cycle;
\draw (-0.9659258263,-0.1830127019) node (1)   {$\bullet$};
\draw (0.2588190451,0.6830127019) node (2)   {$\bullet$};
\draw (0.7071067812,-0.5) node (3)   {$\bullet$};
\draw (0,-1.2247448714) node (4)   {$\bullet$};
\draw (1) -- (2);
\draw (1) -- (3);
\draw (1) -- (4);
\draw (2) -- (3);
\draw (2) -- (4);
\draw (3) -- (4);
\draw[ultra thick] (-0.9659258263,-0.1830127019) -- (0.2588190451,0.6830127019);
\draw[ultra thick] (-0.9659258263,-0.1830127019) -- (0.7071067812,-0.5);
\draw[ultra thick] (-0.9659258263,-0.1830127019) -- (0,-1.2247448714);
\draw[ultra thick] (0.2588190451,0.6830127019) -- (0,-1.2247448714);
\draw[ultra thick] (0.7071067812,-0.5) -- (0,-1.2247448714);
\end{tikzpicture}
&
 \begin{tikzpicture}
\fill[color=red] (-0.9659258263,-0.1830127019) -- (0.7071067812,-0.5) -- (0,-1.2247448714) -- cycle;
\draw (-0.9659258263,-0.1830127019) node (1)   {$\bullet$};
\draw (0.2588190451,0.6830127019) node (2)   {$\bullet$};
\draw (0.7071067812,-0.5) node (3)   {$\bullet$};
\draw (0,-1.2247448714) node (4)   {$\bullet$};
\draw (1) -- (2);
\draw (1) -- (3);
\draw (1) -- (4);
\draw (2) -- (3);
\draw (2) -- (4);
\draw (3) -- (4);
\draw[ultra thick] (-0.9659258263,-0.1830127019) -- (0.2588190451,0.6830127019);
\draw[ultra thick] (-0.9659258263,-0.1830127019) -- (0.7071067812,-0.5);
\draw[ultra thick] (-0.9659258263,-0.1830127019) -- (0,-1.2247448714);
\draw[ultra thick] (0.2588190451,0.6830127019) -- (0.7071067812,-0.5);
\draw[ultra thick] (0.2588190451,0.6830127019) -- (0,-1.2247448714);
\draw[ultra thick] (0.7071067812,-0.5) -- (0,-1.2247448714);
\end{tikzpicture}
&
 \begin{tikzpicture}
\draw[ultra thick] (-0.9659258263,-0.1830127019) -- (0.7071067812,-0.5);
\fill[color=red] (-0.9659258263,-0.1830127019) -- (0.2588190451,0.6830127019) -- (0,-1.2247448714) -- cycle;
\fill[color=red] (-0.9659258263,-0.1830127019) -- (0.7071067812,-0.5) -- (0,-1.2247448714) -- cycle;
\draw (-0.9659258263,-0.1830127019) node (1)   {$\bullet$};
\draw (0.2588190451,0.6830127019) node (2)   {$\bullet$};
\draw (0.7071067812,-0.5) node (3)   {$\bullet$};
\draw (0,-1.2247448714) node (4)   {$\bullet$};
\draw (1) -- (2);
\draw (1) -- (4);
\draw (2) -- (3);
\draw (2) -- (4);
\draw (3) -- (4);
\draw[ultra thick] (-0.9659258263,-0.1830127019) -- (0.2588190451,0.6830127019);
\draw[ultra thick] (-0.9659258263,-0.1830127019) -- (0,-1.2247448714);
\draw[ultra thick] (0.2588190451,0.6830127019) -- (0,-1.2247448714);
\draw[ultra thick] (0.7071067812,-0.5) -- (0,-1.2247448714);
\end{tikzpicture} & 
 \begin{tikzpicture}
\draw[ultra thick] (-0.9659258263,-0.1830127019) -- (0.7071067812,-0.5);
\fill[color=red] (-0.9659258263,-0.1830127019) -- (0.2588190451,0.6830127019) -- (0,-1.2247448714) -- cycle;
\fill[color=red] (-0.9659258263,-0.1830127019) -- (0.7071067812,-0.5) -- (0,-1.2247448714) -- cycle;
\draw (-0.9659258263,-0.1830127019) node (1)   {$\bullet$};
\draw (0.2588190451,0.6830127019) node (2)   {$\bullet$};
\draw (0.7071067812,-0.5) node (3)   {$\bullet$};
\draw (0,-1.2247448714) node (4)   {$\bullet$};
\draw (1) -- (2);
\draw (1) -- (4);
\draw (2) -- (3);
\draw (2) -- (4);
\draw (3) -- (4);
\draw[ultra thick] (-0.9659258263,-0.1830127019) -- (0.2588190451,0.6830127019);
\draw[ultra thick] (-0.9659258263,-0.1830127019) -- (0,-1.2247448714);
\draw[ultra thick] (0.2588190451,0.6830127019) -- (0.7071067812,-0.5);
\draw[ultra thick] (0.2588190451,0.6830127019) -- (0,-1.2247448714);
\draw[ultra thick] (0.7071067812,-0.5) -- (0,-1.2247448714);
\end{tikzpicture}
&
 \begin{tikzpicture}
\fill[color=red] (-0.9659258263,-0.1830127019) -- (0.2588190451,0.6830127019) -- (0.7071067812,-0.5) -- cycle;
\fill[color=red] (0.2588190451,0.6830127019) -- (0.7071067812,-0.5) -- (0,-1.2247448714) -- cycle;
\draw (-0.9659258263,-0.1830127019) node (1)   {$\bullet$};
\draw (0.2588190451,0.6830127019) node (2)   {$\bullet$};
\draw (0.7071067812,-0.5) node (3)   {$\bullet$};
\draw (0,-1.2247448714) node (4)   {$\bullet$};
\draw (1) -- (2);
\draw (1) -- (3);
\draw (1) -- (4);
\draw (2) -- (3);
\draw (2) -- (4);
\draw (3) -- (4);
\draw[ultra thick] (-0.9659258263,-0.1830127019) -- (0.7071067812,-0.5);
\fill[color=red] (-0.9659258263,-0.1830127019) -- (0.2588190451,0.6830127019) -- (0,-1.2247448714) -- cycle;
\draw[ultra thick] (-0.9659258263,-0.1830127019) -- (0.2588190451,0.6830127019);
\draw[ultra thick] (-0.9659258263,-0.1830127019) -- (0,-1.2247448714);
\draw[ultra thick] (0.2588190451,0.6830127019) -- (0.7071067812,-0.5);
\draw[ultra thick] (0.2588190451,0.6830127019) -- (0,-1.2247448714);
\draw[ultra thick] (0.7071067812,-0.5) -- (0,-1.2247448714);
\end{tikzpicture} & 
 \begin{tikzpicture}
\fill[color=red] (-0.9659258263,-0.1830127019) -- (0.2588190451,0.6830127019) -- (0.7071067812,-0.5) -- cycle;
\fill[color=red] (0.2588190451,0.6830127019) -- (0.7071067812,-0.5) -- (0,-1.2247448714) -- cycle;
\draw (-0.9659258263,-0.1830127019) node (1)   {$\bullet$};
\draw (0.2588190451,0.6830127019) node (2)   {$\bullet$};
\draw (0.7071067812,-0.5) node (3)   {$\bullet$};
\draw (0,-1.2247448714) node (4)   {$\bullet$};
\draw (1) -- (2);
\draw (1) -- (4);
\draw (2) -- (3);
\draw (2) -- (4);
\draw (3) -- (4);
\fill[color=red] (-0.9659258263,-0.1830127019) -- (0.2588190451,0.6830127019) -- (0,-1.2247448714) -- cycle;
\draw[ultra thick] (-0.9659258263,-0.1830127019) -- (0.2588190451,0.6830127019);
\draw[ultra thick] (-0.9659258263,-0.1830127019) -- (0,-1.2247448714);
\draw[ultra thick] (0.2588190451,0.6830127019) -- (0.7071067812,-0.5);
\draw[ultra thick] (0.2588190451,0.6830127019) -- (0,-1.2247448714);
\draw[ultra thick] (0.7071067812,-0.5) -- (0,-1.2247448714);
\draw[ultra thick,dashed] (-0.9659258263,-0.1830127019) -- (0.7071067812,-0.5);
\end{tikzpicture} &
 \begin{tikzpicture}
\fill[color=blue] (-0.9659258263,-0.1830127019) -- (0.2588190451,0.6830127019) -- (0.7071067812,-0.5) -- cycle;
\fill[color=blue] (0.2588190451,0.6830127019) -- (0.7071067812,-0.5) -- (0,-1.2247448714) -- cycle;
\draw (-0.9659258263,-0.1830127019) node (1)   {$\bullet$};
\draw (0.2588190451,0.6830127019) node (2)   {$\bullet$};
\draw (0.7071067812,-0.5) node (3)   {$\bullet$};
\draw (0,-1.2247448714) node (4)   {$\bullet$};
\draw (1) -- (2);
\draw (1) -- (4);
\draw (2) -- (3);
\draw (2) -- (4);
\draw (3) -- (4);
\fill[color=blue] (-0.9659258263,-0.1830127019) -- (0.2588190451,0.6830127019) -- (0,-1.2247448714) -- cycle;
\draw[ultra thick] (-0.9659258263,-0.1830127019) -- (0.2588190451,0.6830127019);
\draw[ultra thick] (-0.9659258263,-0.1830127019) -- (0,-1.2247448714);
\draw[ultra thick] (0.2588190451,0.6830127019) -- (0.7071067812,-0.5);
\draw[ultra thick] (0.2588190451,0.6830127019) -- (0,-1.2247448714);
\draw[ultra thick] (0.7071067812,-0.5) -- (0,-1.2247448714);
\draw[thick,dashed] (-0.9659258263,-0.1830127019) -- (0.7071067812,-0.5);
\end{tikzpicture} \\
\hline
\begin{tikzpicture}
\draw (-0.9659258263,-0.1830127019) node (1)   {$\bullet$};
\draw (0.2588190451,0.6830127019) node (2)   {$\bullet$};
\draw (0.7071067812,-0.5) node (3)   {$\bullet$};
\draw (0,-1.2247448714) node (4)   {$\bullet$};
\draw (1) -- (2);
\draw (1) -- (3);
\draw (1) -- (4);
\draw (2) -- (3);
\draw (2) -- (4);
\draw (3) -- (4);
\end{tikzpicture}  
 & \Huge \begin{array}{c} 0 \\ \ \\ \ \\ \end{array}    & 
\resizebox{1cm}{!}{$\frac{1}{2}$}   & 
\resizebox{1cm}{!}{$\frac{1}{2}$}   & 
\resizebox{1cm}{!}{$\frac{1}{2}$}   & 
\resizebox{1cm}{!}{$\frac{1}{2}$}   & 
\resizebox{1cm}{!}{$\frac{1}{2}$}   & 
\resizebox{1cm}{!}{$\frac{1}{2}$}   & 
\resizebox{1cm}{!}{$\frac{1}{2}$}   & 
\resizebox{1cm}{!}{$\frac{1}{2}$}   & 
\resizebox{1cm}{!}{$\frac{1}{2}$}   & 
\resizebox{1cm}{!}{$\frac{1}{2}$}   & 
\resizebox{1cm}{!}{$\frac{2}{3}$}   & 
\resizebox{1cm}{!}{$\frac{2}{3}$}   & 
\resizebox{1cm}{!}{$\frac{2}{3}$}   & 
\resizebox{1cm}{!}{$\frac{2}{3}$}   & 
\resizebox{1cm}{!}{$\frac{2}{3}$}   & 
\resizebox{1cm}{!}{$\frac{2}{3}$}   & 
\resizebox{1cm}{!}{$\frac{2}{3}$}   & 
\resizebox{1cm}{!}{$\frac{2}{3}$}   & 
\resizebox{1cm}{!}{$\frac{3}{4}$}  \\ 
\hline 
\begin{tikzpicture}
\draw (-0.9659258263,-0.1830127019) node (1)   {$\bullet$};
\draw (0.2588190451,0.6830127019) node (2)   {$\bullet$};
\draw (0.7071067812,-0.5) node (3)   {$\bullet$};
\draw (0,-1.2247448714) node (4)   {$\bullet$};
\draw (1) -- (2);
\draw (1) -- (3);
\draw (1) -- (4);
\draw (2) -- (3);
\draw (2) -- (4);
\draw (3) -- (4);
\draw[ultra thick] (-0.9659258263,-0.1830127019) -- (0.2588190451,0.6830127019);
\end{tikzpicture} &    & 
\Huge \begin{array}{c} 0 \\ \ \\ \ \\ \end{array}    & 
\resizebox{1cm}{!}{$\frac{1}{2}$}   & 
\resizebox{1cm}{!}{$\frac{1}{2}$}   & 
\resizebox{1cm}{!}{$\frac{1}{2}$}   & 
\resizebox{1cm}{!}{$\frac{1}{2}$}   & 
\resizebox{1cm}{!}{$\frac{1}{2}$}   & 
\resizebox{1cm}{!}{$\frac{1}{2}$}   & 
\resizebox{1cm}{!}{$\frac{1}{2}$}   & 
\resizebox{1cm}{!}{$\frac{1}{2}$}   & 
\resizebox{1cm}{!}{$\frac{1}{2}$}   & 
\resizebox{1cm}{!}{$\frac{1}{2}$}   & 
\resizebox{1cm}{!}{$\frac{1}{2}$}   & 
\resizebox{1cm}{!}{$\frac{1}{2}$}   & 
\resizebox{1cm}{!}{$\frac{1}{2}$}   & 
\resizebox{1cm}{!}{$\frac{1}{2}$}   & 
\resizebox{1cm}{!}{$\frac{1}{2}$}   & 
\resizebox{1cm}{!}{$\frac{2}{3}$}   & 
\resizebox{1cm}{!}{$\frac{2}{3}$}   & 
\resizebox{1cm}{!}{$\frac{2}{3}$}  \\ 
\hline 

\begin{tikzpicture}
\draw (-0.9659258263,-0.1830127019) node (1)   {$\bullet$};
\draw (0.2588190451,0.6830127019) node (2)   {$\bullet$};
\draw (0.7071067812,-0.5) node (3)   {$\bullet$};
\draw (0,-1.2247448714) node (4)   {$\bullet$};
\draw (1) -- (2);
\draw (1) -- (3);
\draw (1) -- (4);
\draw (2) -- (3);
\draw (2) -- (4);
\draw (3) -- (4);
\draw[ultra thick] (-0.9659258263,-0.1830127019) -- (0.2588190451,0.6830127019);
\draw[ultra thick] (0.2588190451,0.6830127019) -- (0.7071067812,-0.5);
\end{tikzpicture} &    & 
   & 
\Huge \begin{array}{c} 0 \\ \ \\ \ \\ \end{array}    & 
\resizebox{1cm}{!}{$\frac{1}{2}$}   & 
\resizebox{1cm}{!}{$\frac{1}{2}$}   & 
\resizebox{1cm}{!}{$\frac{1}{2}$}   & 
\resizebox{1cm}{!}{$\frac{1}{2}$}   & 
\resizebox{1cm}{!}{$\frac{1}{2}$}   & 
\resizebox{1cm}{!}{$\frac{1}{2}$}   & 
\resizebox{1cm}{!}{$\frac{1}{2}$}   & 
\resizebox{1cm}{!}{$\frac{1}{2}$}   & 
\resizebox{1cm}{!}{$\frac{1}{2}$}   & 
\resizebox{1cm}{!}{$\frac{1}{2}$}   & 
\resizebox{1cm}{!}{$\frac{1}{2}$}   & 
\resizebox{1cm}{!}{$\frac{1}{2}$}   & 
\resizebox{1cm}{!}{$\frac{1}{2}$}   & 
\resizebox{1cm}{!}{$\frac{1}{2}$}   & 
\resizebox{1cm}{!}{$\frac{1}{2}$}   & 
\resizebox{1cm}{!}{$\frac{2}{3}$}   & 
\resizebox{1cm}{!}{$\frac{2}{3}$}  \\ 
\hline 

\begin{tikzpicture}
\draw (-0.9659258263,-0.1830127019) node (1)   {$\bullet$};
\draw (0.2588190451,0.6830127019) node (2)   {$\bullet$};
\draw (0.7071067812,-0.5) node (3)   {$\bullet$};
\draw (0,-1.2247448714) node (4)   {$\bullet$};
\draw (1) -- (2);
\draw (1) -- (3);
\draw (1) -- (4);
\draw (2) -- (3);
\draw (2) -- (4);
\draw (3) -- (4);
\draw[ultra thick] (-0.9659258263,-0.1830127019) -- (0.2588190451,0.6830127019);
\draw[ultra thick] (0.7071067812,-0.5) -- (0,-1.2247448714);
\end{tikzpicture} &       & 
   & 
   & 
\Huge \begin{array}{c} 0 \\ \ \\ \ \\ \end{array}    & 
\resizebox{1cm}{!}{$\frac{1}{2}$}   & 
\resizebox{1cm}{!}{$\frac{1}{2}$}   & 
\resizebox{1cm}{!}{$\frac{1}{2}$}   & 
\resizebox{1cm}{!}{$\frac{1}{2}$}   & 
\resizebox{1cm}{!}{$\frac{1}{2}$}   & 
\resizebox{1cm}{!}{$\frac{1}{2}$}   & 
\resizebox{1cm}{!}{$\frac{1}{2}$}   & 
\resizebox{1cm}{!}{$\frac{1}{2}$}   & 
\resizebox{1cm}{!}{$\frac{1}{2}$}   & 
\resizebox{1cm}{!}{$\frac{1}{2}$}   & 
\resizebox{1cm}{!}{$\frac{1}{2}$}   & 
\resizebox{1cm}{!}{$\frac{1}{2}$}   & 
\resizebox{1cm}{!}{$\frac{1}{2}$}   & 
\resizebox{1cm}{!}{$\frac{1}{2}$}   & 
\resizebox{1cm}{!}{$\frac{1}{2}$}   & 
\resizebox{1cm}{!}{$\frac{1}{2}$}  \\ 
\hline 

\begin{tikzpicture}
\draw (-0.9659258263,-0.1830127019) node (1)   {$\bullet$};
\draw (0.2588190451,0.6830127019) node (2)   {$\bullet$};
\draw (0.7071067812,-0.5) node (3)   {$\bullet$};
\draw (0,-1.2247448714) node (4)   {$\bullet$};
\draw (1) -- (2);
\draw (1) -- (3);
\draw (1) -- (4);
\draw (2) -- (3);
\draw (2) -- (4);
\draw (3) -- (4);
\draw[ultra thick] (-0.9659258263,-0.1830127019) -- (0.2588190451,0.6830127019);
\draw[ultra thick] (-0.9659258263,-0.1830127019) -- (0,-1.2247448714);
\draw[ultra thick] (0.2588190451,0.6830127019) -- (0,-1.2247448714);
\end{tikzpicture}&    & 
   & 
   & 
   & 
\Huge \begin{array}{c} 0 \\ \ \\ \ \\ \end{array}    & 
\resizebox{1cm}{!}{$\frac{1}{2}$}   & 
\resizebox{1cm}{!}{$\frac{1}{2}$}   & 
\resizebox{1cm}{!}{$\frac{1}{2}$}   & 
\resizebox{1cm}{!}{$\frac{1}{2}$}   & 
\resizebox{1cm}{!}{$\frac{1}{2}$}   & 
\resizebox{1cm}{!}{$\frac{1}{2}$}   & 
\resizebox{1cm}{!}{$\frac{1}{3}$}   & 
\resizebox{1cm}{!}{$\frac{1}{2}$}   & 
\resizebox{1cm}{!}{$\frac{1}{2}$}   & 
\resizebox{1cm}{!}{$\frac{1}{2}$}   & 
\resizebox{1cm}{!}{$\frac{1}{2}$}   & 
\resizebox{1cm}{!}{$\frac{1}{2}$}   & 
\resizebox{1cm}{!}{$\frac{1}{2}$}   & 
\resizebox{1cm}{!}{$\frac{1}{2}$}   & 
\resizebox{1cm}{!}{$\frac{3}{5}$}  \\ 
\hline 
\begin{tikzpicture}
\draw (-0.9659258263,-0.1830127019) node (1)   {$\bullet$};
\draw (0.2588190451,0.6830127019) node (2)   {$\bullet$};
\draw (0.7071067812,-0.5) node (3)   {$\bullet$};
\draw (0,-1.2247448714) node (4)   {$\bullet$};
\draw (1) -- (2);
\draw (1) -- (3);
\draw (1) -- (4);
\draw (2) -- (3);
\draw (2) -- (4);
\draw (3) -- (4);
\draw[ultra thick] (-0.9659258263,-0.1830127019) -- (0.2588190451,0.6830127019);
\draw[ultra thick] (0.2588190451,0.6830127019) -- (0.7071067812,-0.5);
\draw[ultra thick] (0.7071067812,-0.5) -- (0,-1.2247448714);
\end{tikzpicture}&    & 
   & 
   & 
   & 
   & 
\Huge \begin{array}{c} 0 \\ \ \\ \ \\ \end{array}    & 
\resizebox{1cm}{!}{$\frac{1}{2}$}   & 
\resizebox{1cm}{!}{$\frac{1}{2}$}   & 
\resizebox{1cm}{!}{$\frac{1}{2}$}   & 
\resizebox{1cm}{!}{$\frac{1}{2}$}   & 
\resizebox{1cm}{!}{$\frac{1}{2}$}   & 
\resizebox{1cm}{!}{$\frac{1}{2}$}   & 
\resizebox{1cm}{!}{$\frac{1}{2}$}   & 
\resizebox{1cm}{!}{$\frac{1}{2}$}   & 
\resizebox{1cm}{!}{$\frac{1}{2}$}   & 
\resizebox{1cm}{!}{$\frac{1}{2}$}   & 
\resizebox{1cm}{!}{$\frac{1}{2}$}   & 
\resizebox{1cm}{!}{$\frac{1}{2}$}   & 
\resizebox{1cm}{!}{$\frac{1}{2}$}   & 
\resizebox{1cm}{!}{$\frac{1}{2}$}  \\ 
\hline 
\begin{tikzpicture}
\draw (-0.9659258263,-0.1830127019) node (1)   {$\bullet$};
\draw (0.2588190451,0.6830127019) node (2)   {$\bullet$};
\draw (0.7071067812,-0.5) node (3)   {$\bullet$};
\draw (0,-1.2247448714) node (4)   {$\bullet$};
\draw (1) -- (2);
\draw (1) -- (3);
\draw (1) -- (4);
\draw (2) -- (3);
\draw (2) -- (4);
\draw (3) -- (4);
\draw[ultra thick] (-0.9659258263,-0.1830127019) -- (0,-1.2247448714);
\draw[ultra thick] (0.2588190451,0.6830127019) -- (0,-1.2247448714);
\draw[ultra thick] (0.7071067812,-0.5) -- (0,-1.2247448714);
\end{tikzpicture} &    & 
   & 
   & 
   & 
   & 
   & 
\Huge \begin{array}{c} 0 \\ \ \\ \ \\ \end{array}    & 
\resizebox{1cm}{!}{$\frac{1}{2}$}   & 
\resizebox{1cm}{!}{$\frac{1}{2}$}   & 
\resizebox{1cm}{!}{$\frac{1}{2}$}   & 
\resizebox{1cm}{!}{$\frac{1}{2}$}   & 
\resizebox{1cm}{!}{$\frac{1}{2}$}   & 
\resizebox{1cm}{!}{$\frac{1}{2}$}   & 
\resizebox{1cm}{!}{$\frac{1}{2}$}   & 
\resizebox{1cm}{!}{$\frac{1}{2}$}   & 
\resizebox{1cm}{!}{$\frac{1}{2}$}   & 
\resizebox{1cm}{!}{$\frac{1}{2}$}   & 
\resizebox{1cm}{!}{$\frac{1}{2}$}   & 
\resizebox{1cm}{!}{$\frac{2}{3}$}   & 
\resizebox{1cm}{!}{$\frac{2}{3}$}  \\ 
\hline 
\begin{tikzpicture}
\draw (-0.9659258263,-0.1830127019) node (1)   {$\bullet$};
\draw (0.2588190451,0.6830127019) node (2)   {$\bullet$};
\draw (0.7071067812,-0.5) node (3)   {$\bullet$};
\draw (0,-1.2247448714) node (4)   {$\bullet$};
\draw (1) -- (2);
\draw (1) -- (3);
\draw (1) -- (4);
\draw (2) -- (3);
\draw (2) -- (4);
\draw (3) -- (4);
\draw[ultra thick] (-0.9659258263,-0.1830127019) -- (0.2588190451,0.6830127019);
\draw[ultra thick] (-0.9659258263,-0.1830127019) -- (0,-1.2247448714);
\draw[ultra thick] (0.2588190451,0.6830127019) -- (0.7071067812,-0.5);
\draw[ultra thick] (0.2588190451,0.6830127019) -- (0,-1.2247448714);
\end{tikzpicture}&    & 
   & 
   & 
   & 
   & 
   & 
   & 
\Huge \begin{array}{c} 0 \\ \ \\ \ \\ \end{array}    & 
\resizebox{1cm}{!}{$\frac{1}{2}$}   & 
\resizebox{1cm}{!}{$\frac{1}{2}$}   & 
\resizebox{1cm}{!}{$\frac{1}{2}$}   & 
\resizebox{1cm}{!}{$\frac{1}{2}$}   & 
\resizebox{1cm}{!}{$\frac{1}{3}$}   & 
\resizebox{1cm}{!}{$\frac{1}{2}$}   & 
\resizebox{1cm}{!}{$\frac{1}{2}$}   & 
\resizebox{1cm}{!}{$\frac{1}{2}$}   & 
\resizebox{1cm}{!}{$\frac{1}{2}$}   & 
\resizebox{1cm}{!}{$\frac{1}{2}$}   & 
\resizebox{1cm}{!}{$\frac{1}{2}$}   & 
\resizebox{1cm}{!}{$\frac{1}{2}$}  \\ 
\hline 
\begin{tikzpicture}
\draw (-0.9659258263,-0.1830127019) node (1)   {$\bullet$};
\draw (0.2588190451,0.6830127019) node (2)   {$\bullet$};
\draw (0.7071067812,-0.5) node (3)   {$\bullet$};
\draw (0,-1.2247448714) node (4)   {$\bullet$};
\draw (1) -- (2);
\draw (1) -- (3);
\draw (1) -- (4);
\draw (2) -- (3);
\draw (2) -- (4);
\draw (3) -- (4);
\draw[ultra thick] (-0.9659258263,-0.1830127019) -- (0.7071067812,-0.5);
\draw[ultra thick] (-0.9659258263,-0.1830127019) -- (0,-1.2247448714);
\draw[ultra thick] (0.2588190451,0.6830127019) -- (0.7071067812,-0.5);
\draw[ultra thick] (0.2588190451,0.6830127019) -- (0,-1.2247448714);
\end{tikzpicture} &    & 
   & 
   & 
   & 
   & 
   & 
   & 
   & 
\Huge \begin{array}{c} 0 \\ \ \\ \ \\ \end{array}    & 
\resizebox{1cm}{!}{$\frac{1}{2}$}   & 
\resizebox{1cm}{!}{$\frac{1}{2}$}   & 
\resizebox{1cm}{!}{$\frac{1}{2}$}   & 
\resizebox{1cm}{!}{$\frac{1}{2}$}   & 
\resizebox{1cm}{!}{$\frac{1}{2}$}   & 
\resizebox{1cm}{!}{$\frac{1}{2}$}   & 
\resizebox{1cm}{!}{$\frac{1}{2}$}   & 
\resizebox{1cm}{!}{$\frac{1}{2}$}   & 
\resizebox{1cm}{!}{$\frac{1}{2}$}   & 
\resizebox{1cm}{!}{$\frac{1}{2}$}   & 
\resizebox{1cm}{!}{$\frac{1}{2}$}  \\ 
\hline 
\begin{tikzpicture}
\draw (-0.9659258263,-0.1830127019) node (1)   {$\bullet$};
\draw (0.2588190451,0.6830127019) node (2)   {$\bullet$};
\draw (0.7071067812,-0.5) node (3)   {$\bullet$};
\draw (0,-1.2247448714) node (4)   {$\bullet$};
\draw (1) -- (2);
\draw (1) -- (3);
\draw (1) -- (4);
\draw (2) -- (3);
\draw (2) -- (4);
\draw (3) -- (4);
\draw[ultra thick] (-0.9659258263,-0.1830127019) -- (0.7071067812,-0.5);
\draw[ultra thick] (-0.9659258263,-0.1830127019) -- (0,-1.2247448714);
\draw[ultra thick] (0.2588190451,0.6830127019) -- (0.7071067812,-0.5);
\draw[ultra thick] (0.2588190451,0.6830127019) -- (0,-1.2247448714);
\draw[ultra thick] (0.7071067812,-0.5) -- (0,-1.2247448714);
\end{tikzpicture} &    & 
   & 
   & 
   & 
   & 
   & 
   & 
   & 
   & 
\Huge \begin{array}{c} 0 \\ \ \\ \ \\ \end{array}    & 
\resizebox{1cm}{!}{$\frac{1}{2}$}   & 
\resizebox{1cm}{!}{$\frac{1}{2}$}   & 
\resizebox{1cm}{!}{$\frac{1}{2}$}   & 
\resizebox{1cm}{!}{$\frac{1}{3}$}   & 
\resizebox{1cm}{!}{$\frac{1}{2}$}   & 
\resizebox{1cm}{!}{$\frac{1}{3}$}   & 
\resizebox{1cm}{!}{$\frac{1}{2}$}   & 
\resizebox{1cm}{!}{$\frac{1}{2}$}   & 
\resizebox{1cm}{!}{$\frac{1}{2}$}   & 
\resizebox{1cm}{!}{$\frac{1}{2}$}  \\ 
\hline 
\begin{tikzpicture}
\draw (-0.9659258263,-0.1830127019) node (1)   {$\bullet$};
\draw (0.2588190451,0.6830127019) node (2)   {$\bullet$};
\draw (0.7071067812,-0.5) node (3)   {$\bullet$};
\draw (0,-1.2247448714) node (4)   {$\bullet$};
\draw (1) -- (2);
\draw (1) -- (3);
\draw (1) -- (4);
\draw (2) -- (3);
\draw (2) -- (4);
\draw (3) -- (4);
\draw[ultra thick] (-0.9659258263,-0.1830127019) -- (0.2588190451,0.6830127019);
\draw[ultra thick] (-0.9659258263,-0.1830127019) -- (0.7071067812,-0.5);
\draw[ultra thick] (-0.9659258263,-0.1830127019) -- (0,-1.2247448714);
\draw[ultra thick] (0.2588190451,0.6830127019) -- (0.7071067812,-0.5);
\draw[ultra thick] (0.2588190451,0.6830127019) -- (0,-1.2247448714);
\draw[ultra thick] (0.7071067812,-0.5) -- (0,-1.2247448714);
\end{tikzpicture} &    & 
   & 
   & 
   & 
   & 
   & 
   & 
   & 
   & 
   & 
\Huge \begin{array}{c} 0 \\ \ \\ \ \\ \end{array}    & 
\resizebox{1cm}{!}{$\frac{1}{2}$}   & 
\resizebox{1cm}{!}{$\frac{1}{2}$}   & 
\resizebox{1cm}{!}{$\frac{1}{2}$}   & 
\resizebox{1cm}{!}{$\frac{1}{3}$}   & 
\resizebox{1cm}{!}{$\frac{1}{2}$}   & 
\resizebox{1cm}{!}{$\frac{1}{3}$}   & 
\resizebox{1cm}{!}{$\frac{1}{3}$}   & 
\resizebox{1cm}{!}{$\frac{1}{3}$}   & 
\resizebox{1cm}{!}{$\frac{1}{2}$}  \\ 
\hline 
\begin{tikzpicture}
\fill[color=red] (-0.9659258263,-0.1830127019) -- (0.7071067812,-0.5) -- (0,-1.2247448714) -- cycle;
\draw (-0.9659258263,-0.1830127019) node (1)   {$\bullet$};
\draw (0.2588190451,0.6830127019) node (2)   {$\bullet$};
\draw (0.7071067812,-0.5) node (3)   {$\bullet$};
\draw (0,-1.2247448714) node (4)   {$\bullet$};
\draw (1) -- (2);
\draw (1) -- (3);
\draw (1) -- (4);
\draw (2) -- (3);
\draw (2) -- (4);
\draw (3) -- (4);
\draw[ultra thick] (-0.9659258263,-0.1830127019) -- (0.7071067812,-0.5);
\draw[ultra thick] (-0.9659258263,-0.1830127019) -- (0,-1.2247448714);
\draw[ultra thick] (0.7071067812,-0.5) -- (0,-1.2247448714);
\end{tikzpicture} &    & 
   & 
   & 
   & 
   & 
   & 
   & 
   & 
   & 
   & 
   & 
\Huge \begin{array}{c} 0 \\ \ \\ \ \\ \end{array}    & 
\resizebox{1cm}{!}{$\frac{1}{2}$}   & 
\resizebox{1cm}{!}{$\frac{1}{2}$}   & 
\resizebox{1cm}{!}{$\frac{1}{2}$}   & 
\resizebox{1cm}{!}{$\frac{1}{2}$}   & 
\resizebox{1cm}{!}{$\frac{1}{2}$}   & 
\resizebox{1cm}{!}{$\frac{1}{2}$}   & 
\resizebox{1cm}{!}{$\frac{1}{2}$}   & 
\resizebox{1cm}{!}{$\frac{1}{2}$}  \\ 
\hline 
\begin{tikzpicture}
\fill[color=red] (-0.9659258263,-0.1830127019) -- (0.7071067812,-0.5) -- (0,-1.2247448714) -- cycle;
\draw (-0.9659258263,-0.1830127019) node (1)   {$\bullet$};
\draw (0.2588190451,0.6830127019) node (2)   {$\bullet$};
\draw (0.7071067812,-0.5) node (3)   {$\bullet$};
\draw (0,-1.2247448714) node (4)   {$\bullet$};
\draw (1) -- (2);
\draw (1) -- (3);
\draw (1) -- (4);
\draw (2) -- (3);
\draw (2) -- (4);
\draw (3) -- (4);
\draw[ultra thick] (-0.9659258263,-0.1830127019) -- (0.2588190451,0.6830127019);
\draw[ultra thick] (-0.9659258263,-0.1830127019) -- (0.7071067812,-0.5);
\draw[ultra thick] (-0.9659258263,-0.1830127019) -- (0,-1.2247448714);
\draw[ultra thick] (0.7071067812,-0.5) -- (0,-1.2247448714);
\end{tikzpicture} &    & 
   & 
   & 
   & 
   & 
   & 
   & 
   & 
   & 
   & 
   & 
   & 
\Huge \begin{array}{c} 0 \\ \ \\ \ \\ \end{array}    & 
\resizebox{1cm}{!}{$\frac{1}{2}$}   & 
\resizebox{1cm}{!}{$\frac{1}{2}$}   & 
\resizebox{1cm}{!}{$\frac{1}{2}$}   & 
\resizebox{1cm}{!}{$\frac{1}{2}$}   & 
\resizebox{1cm}{!}{$\frac{1}{2}$}   & 
\resizebox{1cm}{!}{$\frac{1}{2}$}   & 
\resizebox{1cm}{!}{$\frac{1}{2}$}  \\ 
\hline 
\begin{tikzpicture}
\fill[color=red] (-0.9659258263,-0.1830127019) -- (0.7071067812,-0.5) -- (0,-1.2247448714) -- cycle;
\draw (-0.9659258263,-0.1830127019) node (1)   {$\bullet$};
\draw (0.2588190451,0.6830127019) node (2)   {$\bullet$};
\draw (0.7071067812,-0.5) node (3)   {$\bullet$};
\draw (0,-1.2247448714) node (4)   {$\bullet$};
\draw (1) -- (2);
\draw (1) -- (3);
\draw (1) -- (4);
\draw (2) -- (3);
\draw (2) -- (4);
\draw (3) -- (4);
\draw[ultra thick] (-0.9659258263,-0.1830127019) -- (0.2588190451,0.6830127019);
\draw[ultra thick] (-0.9659258263,-0.1830127019) -- (0.7071067812,-0.5);
\draw[ultra thick] (-0.9659258263,-0.1830127019) -- (0,-1.2247448714);
\draw[ultra thick] (0.2588190451,0.6830127019) -- (0,-1.2247448714);
\draw[ultra thick] (0.7071067812,-0.5) -- (0,-1.2247448714);
\end{tikzpicture} &    & 
   & 
   & 
   & 
   & 
   & 
   & 
   & 
   & 
   & 
   & 
   & 
   & 
\Huge \begin{array}{c} 0 \\ \ \\ \ \\ \end{array}    & 
\resizebox{1cm}{!}{$\frac{1}{2}$}   & 
\resizebox{1cm}{!}{$\frac{1}{3}$}   & 
\resizebox{1cm}{!}{$\frac{1}{2}$}   & 
\resizebox{1cm}{!}{$\frac{1}{2}$}   & 
\resizebox{1cm}{!}{$\frac{1}{2}$}   & 
\resizebox{1cm}{!}{$\frac{1}{2}$}  \\ 
\hline 
\begin{tikzpicture}
\fill[color=red] (-0.9659258263,-0.1830127019) -- (0.7071067812,-0.5) -- (0,-1.2247448714) -- cycle;
\draw (-0.9659258263,-0.1830127019) node (1)   {$\bullet$};
\draw (0.2588190451,0.6830127019) node (2)   {$\bullet$};
\draw (0.7071067812,-0.5) node (3)   {$\bullet$};
\draw (0,-1.2247448714) node (4)   {$\bullet$};
\draw (1) -- (2);
\draw (1) -- (3);
\draw (1) -- (4);
\draw (2) -- (3);
\draw (2) -- (4);
\draw (3) -- (4);
\draw[ultra thick] (-0.9659258263,-0.1830127019) -- (0.2588190451,0.6830127019);
\draw[ultra thick] (-0.9659258263,-0.1830127019) -- (0.7071067812,-0.5);
\draw[ultra thick] (-0.9659258263,-0.1830127019) -- (0,-1.2247448714);
\draw[ultra thick] (0.2588190451,0.6830127019) -- (0.7071067812,-0.5);
\draw[ultra thick] (0.2588190451,0.6830127019) -- (0,-1.2247448714);
\draw[ultra thick] (0.7071067812,-0.5) -- (0,-1.2247448714);
\end{tikzpicture} &    & 
   & 
   & 
   & 
   & 
   & 
   & 
   & 
   & 
   & 
   & 
   & 
   & 
   & 
\Huge \begin{array}{c} 0 \\ \ \\ \ \\ \end{array}    & 
\resizebox{1cm}{!}{$\frac{1}{2}$}   & 
\resizebox{1cm}{!}{$\frac{1}{3}$}   & 
\resizebox{1cm}{!}{$\frac{1}{3}$}   & 
\resizebox{1cm}{!}{$\frac{1}{3}$}   & 
\resizebox{1cm}{!}{$\frac{2}{5}$}  \\ 
\hline 
\begin{tikzpicture}
\draw[ultra thick] (-0.9659258263,-0.1830127019) -- (0.7071067812,-0.5);
\fill[color=red] (-0.9659258263,-0.1830127019) -- (0.2588190451,0.6830127019) -- (0,-1.2247448714) -- cycle;
\fill[color=red] (-0.9659258263,-0.1830127019) -- (0.7071067812,-0.5) -- (0,-1.2247448714) -- cycle;
\draw (-0.9659258263,-0.1830127019) node (1)   {$\bullet$};
\draw (0.2588190451,0.6830127019) node (2)   {$\bullet$};
\draw (0.7071067812,-0.5) node (3)   {$\bullet$};
\draw (0,-1.2247448714) node (4)   {$\bullet$};
\draw (1) -- (2);
\draw (1) -- (4);
\draw (2) -- (3);
\draw (2) -- (4);
\draw (3) -- (4);
\draw[ultra thick] (-0.9659258263,-0.1830127019) -- (0.2588190451,0.6830127019);
\draw[ultra thick] (-0.9659258263,-0.1830127019) -- (0,-1.2247448714);
\draw[ultra thick] (0.2588190451,0.6830127019) -- (0,-1.2247448714);
\draw[ultra thick] (0.7071067812,-0.5) -- (0,-1.2247448714);
\end{tikzpicture} &    & 
   & 
   & 
   & 
   & 
   & 
   & 
   & 
   & 
   & 
   & 
   & 
   & 
   & 
   & 
\Huge \begin{array}{c} 0 \\ \ \\ \ \\ \end{array}    & 
\resizebox{1cm}{!}{$\frac{1}{2}$}   & 
\resizebox{1cm}{!}{$\frac{1}{2}$}   & 
\resizebox{1cm}{!}{$\frac{1}{2}$}   & 
\resizebox{1cm}{!}{$\frac{1}{2}$}  \\ 
\hline 
\begin{tikzpicture}
\draw[ultra thick] (-0.9659258263,-0.1830127019) -- (0.7071067812,-0.5);
\fill[color=red] (-0.9659258263,-0.1830127019) -- (0.2588190451,0.6830127019) -- (0,-1.2247448714) -- cycle;
\fill[color=red] (-0.9659258263,-0.1830127019) -- (0.7071067812,-0.5) -- (0,-1.2247448714) -- cycle;
\draw (-0.9659258263,-0.1830127019) node (1)   {$\bullet$};
\draw (0.2588190451,0.6830127019) node (2)   {$\bullet$};
\draw (0.7071067812,-0.5) node (3)   {$\bullet$};
\draw (0,-1.2247448714) node (4)   {$\bullet$};
\draw (1) -- (2);
\draw (1) -- (4);
\draw (2) -- (3);
\draw (2) -- (4);
\draw (3) -- (4);
\draw[ultra thick] (-0.9659258263,-0.1830127019) -- (0.2588190451,0.6830127019);
\draw[ultra thick] (-0.9659258263,-0.1830127019) -- (0,-1.2247448714);
\draw[ultra thick] (0.2588190451,0.6830127019) -- (0.7071067812,-0.5);
\draw[ultra thick] (0.2588190451,0.6830127019) -- (0,-1.2247448714);
\draw[ultra thick] (0.7071067812,-0.5) -- (0,-1.2247448714);
\end{tikzpicture} &    & 
   & 
   & 
   & 
   & 
   & 
   & 
   & 
   & 
   & 
   & 
   & 
   & 
   & 
   & 
   & 
\Huge \begin{array}{c} 0 \\ \ \\ \ \\ \end{array}    & 
\resizebox{1cm}{!}{$\frac{1}{3}$}   & 
\resizebox{1cm}{!}{$\frac{1}{3}$}   & 
\resizebox{1cm}{!}{$\frac{1}{3}$}  \\ 
\hline 
\begin{tikzpicture}
\fill[color=red] (-0.9659258263,-0.1830127019) -- (0.2588190451,0.6830127019) -- (0.7071067812,-0.5) -- cycle;
\fill[color=red] (0.2588190451,0.6830127019) -- (0.7071067812,-0.5) -- (0,-1.2247448714) -- cycle;
\draw (-0.9659258263,-0.1830127019) node (1)   {$\bullet$};
\draw (0.2588190451,0.6830127019) node (2)   {$\bullet$};
\draw (0.7071067812,-0.5) node (3)   {$\bullet$};
\draw (0,-1.2247448714) node (4)   {$\bullet$};
\draw (1) -- (2);
\draw (1) -- (3);
\draw (1) -- (4);
\draw (2) -- (3);
\draw (2) -- (4);
\draw (3) -- (4);
\draw[ultra thick] (-0.9659258263,-0.1830127019) -- (0.7071067812,-0.5);
\fill[color=red] (-0.9659258263,-0.1830127019) -- (0.2588190451,0.6830127019) -- (0,-1.2247448714) -- cycle;
\draw[ultra thick] (-0.9659258263,-0.1830127019) -- (0.2588190451,0.6830127019);
\draw[ultra thick] (-0.9659258263,-0.1830127019) -- (0,-1.2247448714);
\draw[ultra thick] (0.2588190451,0.6830127019) -- (0.7071067812,-0.5);
\draw[ultra thick] (0.2588190451,0.6830127019) -- (0,-1.2247448714);
\draw[ultra thick] (0.7071067812,-0.5) -- (0,-1.2247448714);
\end{tikzpicture} &    & 
   & 
   & 
   & 
   & 
   & 
   & 
   & 
   & 
   & 
   & 
   & 
   & 
   & 
   & 
   & 
   & 
\Huge \begin{array}{c} 0 \\ \ \\ \ \\ \end{array}    & 
\resizebox{1cm}{!}{$\frac{1}{3}$}   & 
\resizebox{1cm}{!}{$\frac{1}{3}$}  \\ 
\hline 
\begin{tikzpicture}
\fill[color=red] (-0.9659258263,-0.1830127019) -- (0.2588190451,0.6830127019) -- (0.7071067812,-0.5) -- cycle;
\fill[color=red] (0.2588190451,0.6830127019) -- (0.7071067812,-0.5) -- (0,-1.2247448714) -- cycle;
\draw (-0.9659258263,-0.1830127019) node (1)   {$\bullet$};
\draw (0.2588190451,0.6830127019) node (2)   {$\bullet$};
\draw (0.7071067812,-0.5) node (3)   {$\bullet$};
\draw (0,-1.2247448714) node (4)   {$\bullet$};
\draw (1) -- (2);
\draw (1) -- (4);
\draw (2) -- (3);
\draw (2) -- (4);
\draw (3) -- (4);
\fill[color=red] (-0.9659258263,-0.1830127019) -- (0.2588190451,0.6830127019) -- (0,-1.2247448714) -- cycle;
\draw[ultra thick] (-0.9659258263,-0.1830127019) -- (0.2588190451,0.6830127019);
\draw[ultra thick] (-0.9659258263,-0.1830127019) -- (0,-1.2247448714);
\draw[ultra thick] (0.2588190451,0.6830127019) -- (0.7071067812,-0.5);
\draw[ultra thick] (0.2588190451,0.6830127019) -- (0,-1.2247448714);
\draw[ultra thick] (0.7071067812,-0.5) -- (0,-1.2247448714);
\draw[ultra thick,dashed] (-0.9659258263,-0.1830127019) -- (0.7071067812,-0.5);
\end{tikzpicture} &    & 
   & 
   & 
   & 
   & 
   & 
   & 
   & 
   & 
   & 
   & 
   & 
   & 
   & 
   & 
   & 
   & 
   & 
\Huge \begin{array}{c} 0 \\ \ \\ \ \\ \end{array}    & 
\resizebox{1cm}{!}{$\frac{1}{4}$}
 \\ 
\hline 
\begin{tikzpicture}
\fill[color=blue] (-0.9659258263,-0.1830127019) -- (0.2588190451,0.6830127019) -- (0.7071067812,-0.5) -- cycle;
\fill[color=blue] (0.2588190451,0.6830127019) -- (0.7071067812,-0.5) -- (0,-1.2247448714) -- cycle;
\draw (-0.9659258263,-0.1830127019) node (1)   {$\bullet$};
\draw (0.2588190451,0.6830127019) node (2)   {$\bullet$};
\draw (0.7071067812,-0.5) node (3)   {$\bullet$};
\draw (0,-1.2247448714) node (4)   {$\bullet$};
\draw (1) -- (2);
\draw (1) -- (4);
\draw (2) -- (3);
\draw (2) -- (4);
\draw (3) -- (4);
\fill[color=blue] (-0.9659258263,-0.1830127019) -- (0.2588190451,0.6830127019) -- (0,-1.2247448714) -- cycle;
\draw[ultra thick] (-0.9659258263,-0.1830127019) -- (0.2588190451,0.6830127019);
\draw[ultra thick] (-0.9659258263,-0.1830127019) -- (0,-1.2247448714);
\draw[ultra thick] (0.2588190451,0.6830127019) -- (0.7071067812,-0.5);
\draw[ultra thick] (0.2588190451,0.6830127019) -- (0,-1.2247448714);
\draw[ultra thick] (0.7071067812,-0.5) -- (0,-1.2247448714);
\draw[thick,dashed] (-0.9659258263,-0.1830127019) -- (0.7071067812,-0.5);
\end{tikzpicture} &    & 
   & 
   & 
   & 
   & 
   & 
   & 
   & 
   & 
   & 
   & 
   & 
   & 
   & 
   & 
   & 
   & 
   & 
   & 
\Huge \begin{array}{c} 0 \\ \ \\ \ \\ \end{array}   \\ 
\hline
\end{array}
$
}
\caption{Simplicial complexes on 4 vertices}
\label{tab:simp4}
\end{table}

\end{document}